%


\documentclass[noinfoline]{imsart}
\RequirePackage[OT1]{fontenc}
\RequirePackage{amsthm,amsmath,natbib}
\setcitestyle{square,numbers}
\RequirePackage[colorlinks,citecolor=blue,urlcolor=blue]{hyperref}
\usepackage{enumerate}
\usepackage{amsmath}
\usepackage{amsfonts}

\usepackage{geometry}
\geometry{a4paper,
left=27mm, right=27mm,top=27mm,bottom=27mm
}




\newtheorem{theorem}{Theorem}
\newtheorem{lemma}[theorem]{Lemma}
\newtheorem{proposition}[theorem]{Proposition}
\newtheorem{corollary}[theorem]{Corollary}
\theoremstyle{definition}
\newtheorem{definition}[theorem]{Definition}

\theoremstyle{remark}
\newtheorem{remark}[theorem]{Remark}
\numberwithin{equation}{section} \setcounter{page}{1}





\newcommand{\md}{\operatorname{d}}

\newcommand{\spann}[0]{\operatorname{span}}



\begin{document}

\begin{frontmatter}
\title{Reduction and reconstruction of SDEs via Girsanov and quasi Doob symmetries}
\runtitle{Reduction and reconstruction of SDEs}

\begin{aug}
\author{\fnms{Francesco C.} \snm{De Vecchi}\thanksref{a,e2}
\ead[label=e2,mark]{francesco.devecchi@uni-bonn.de}},
\author{\fnms{Paola} \snm{Morando}\thanksref{b,e3}%
\ead[label=e3,mark]{paola.morando@unimi.it}}
\and \author{\fnms{Stefania} \snm{Ugolini}\thanksref{c,e4}%
\ead[label=e4,mark]{stefania.ugolini@unimi.it}}

\affiliation{Rheinische Friedrich-Wilhelms-Universit\"at Bonn. and Universit\`a degli Studi di Milano}

\address[a]{Institute for Applied Mathematics and Hausdorff Center for Mathematics,
Endenicher Allee 60, 53115 Bonn, Germany.
\printead{e2}}

\address[b]{DISAA,
 Via Celoria 2, 20133 Milano, Italy.
\printead{e3}}

\address[c]{Dipartimento di Matematica,
 Via Saldini 50, 20113 Milano, Italy.
\printead{e4}}

\runauthor{De Vecchi, Morando and Ugolini}

\end{aug}

\begin{abstract}
A reduction procedure for stochastic differential equations based on stochastic symmetries including Girsanov random transformations is proposed. In this setting, a new notion of reconstruction is given, involving  the expectation values of functionals of solution to the SDE  and a reconstruction theorem for general stochastic symmetries is proved. Moreover, the notable case of reduction under the closed subclass of quasi Doob transformations is presented. The theoretical results are applied to stochastic models relevant in the applications.
\end{abstract}

\begin{keyword}[class=MSC2020]
\kwd[Primary ]{60H10}
\kwd[; secondary ]{58D19}
\end{keyword}

\begin{keyword}
\kwd{Lie's symmetry analysis} \kwd{stochastic differential equations} \kwd{Girsanov transform} \kwd{Doob's $h$-transform} \kwd{integration by quadratures}
\end{keyword}

\end{frontmatter}

\section{Introduction}

The study of symmetries and invariance properties of ordinary and partial differential equations (ODEs
and PDEs, respectively) is a classical and well-developed research field (see, e.g., \cite{Bluman,Olver,Stephani}) and provides a powerful tool for both computing some explicit solutions to the equations and analyzing their
qualitative behavior. Some important applications of this theory, in the case of ODEs, are the reduction of the dimension of a system of ODEs,
(see, e.g., \cite{Stephani}), or the development of symmetric numerical discretization schemes for the ODEs, which permit the preservation of  some important features of the dynamical system or the reduction of the numerical error of the approximation (see, e.g., \cite{Dorodnitsyn,HairerE2010}). \\

In recent years there has been a growing interest in generalizing and applying  techniques and results of the classical Lie's symmetry analysis to stochastic differential equations in both finite (SDEs) and infinite (SPDEs) dimensions. In this paper we use the approach of our previous works (see \cite{AlDeMoUg2018,AlDeMoUg2019,DMU2,DMU,DMU3}), where the concept of weak stochastic symmetry of a general SDE driven by semimartingale and some generalizations has been proposed (for different approaches to the same problem see, e.g., \cite{Applebaum2019markov,Zambrini2017,Li2010,Ga2019,Gaeta,GaSpa2017,Kozlov20182,Kozlov20181,Cruzeiro2016,Ortega20092,Ortega20091,Zambrini2008,Liaobook,Liao2019}).  \\
More precisely, our starting point is  the recent paper \cite{DMU3} on symmetries of SDEs driven by Brownian motion, where   we introduce the notion of general stochastic symmetry, i.e. an invariance of the set of solutions to an SDE with respect to transformations involving a space diffeomorphism, a stochastic time rescaling, a random rotation of the driving Brownian motion and a Girsanov transformation of the underlying probability measure. Thanks to this general notion of symmetry, in \cite{DMU3}  we  establish a one-to-one correspondence between the generalized weak symmetries of an SDE and the (deterministic) Lie's point symmetries of the related Kolmogorov PDE.\\
In this paper we face the problem of reduction and reconstruction by quadratures of an SDE admitting general symmetries, generalizing the results of \cite{DMU2}, where the same problem was discussed using only the weak symmetries introduced in \cite{DMU} which do not include the measure  change. \\
The proofs of reduction and reconstruction procedures proposed here are constructive and they can inspire concrete algorithms for the integration by quadratures of a symmetric SDE. A similar procedure, considering a smaller family of transformations, was addressed before by some authors (see, e.g., \cite{Gaeta,Ko2010,Ortega20092,Liaobook}) and in our previous papers \cite{AlDeMoUg2019,DMU2}.\\

The described procedure is interesting both from a theoretical and an applied perspective.
Indeed, from a theoretical point of view, the procedure provides a standard method to express the expectation of functionals of the solution process to symmetric Brownian-motion-driven SDEs, by using only Brownian motions, iterated integrals of Brownian motions and random time changes depending on the previous expressions. In this regard, this paper can be seen as the analogue of \cite{Craddock2007,Craddock2009,Craddock2004}, where  the symmetries of Kolmogorov equation are used for (more or less) explicitly computing the transition density or the expectation of special functionals of the process. In this paper a similar result is obtained by using the stochastic symmetries of the SDE which are, in general,  a wider class (see \cite{DMU3}). \\
From an applied point of view this is the first step in the direction of constructing symmetry adapted numerical methods for SDEs admitting symmetries involving probability measure change (see \cite{DeUg2017}, where this idea is applied to SDEs admitting only strong symmetries and \cite{AlDeMoUg2019}, where the topic is discussed in the case of weak stochastic symmetries without measure changes). \\
Moreover, in order to provide a suitable framework for numerical applications, we introduce  the notion of quasi Doob transformation,  which is a general stochastic transformation where the change of probability measure can be expressed as a Markovian function of the process plus a Riemann integral of a Markovian function of the process with respect to the time. We  prove that this family of stochastic transformations is closed with respect to composition and  that the reduction and reconstruction procedures, exploiting quasi Doob stochastic symmetries, can be done  involving only quasi Doob transformations. This  result is particularly relevant in numerical applications, since quasi Doob transformations, in evaluating the stochastic measure change, involve only the numerical computation of It\^o integrals which can be numerically simulated more easily and therefore with a lower numerical error.  \\

We stress that the results of this paper are not just a straightforward generalization of those in \cite{DMU2}, since the bigger family of transformations appearing in the reduction process necessitates the introduction  of  a generalized concept of reconstruction of an SDE, which can no longer be in a pathwise sense but only in mean. Although the reconstruction  result of this paper is weaker than the one proposed in \cite{DMU2}, it is strong  enough to be interesting in numerical applications (in particular using the subfamily of quasi Doob symmetries) and it permits to cover a wider class of symmetric SDEs that previously could be only partially tackled with the use of the associated Kolmogorov equation. For example, in Section \ref{section_CIR}, we apply our results to the study of the CIR model, which, despite admitting  a very symmetric Kolmogorov equation (see, e.g., \cite{Craddock2004}), has not weak stochastic symmetries of the form proposed in \cite{DMU2,DMU}. Furthermore the model discussed in Section \ref{section_2d}, which in \cite{DMU2} was only reduced by one dimension, in the new framework can be completely integrated.

The paper is organized as follows: in Section \ref{section_1} we recall the definition of general stochastic transformation for an SDE and its solutions, while in Section \ref{section_2}, after recalling   the definition of general stochastic symmetry for an SDE given in \cite{DMU3}, we prove that the family of general stochastic infinitesimal symmetries of an SDE forms a Lie algebra.
In Section \ref{section_3} we introduce the notion of quasi Doob transformation and  we define quasi Doob symmetries for an SDE, proving that they provide an interesting subclass of general stochastic symmetries. Section \ref{section_4} is devoted to prove reduction and reconstruction theorems in this new framework and in Section \ref{section 5}
our results are explicitly applied to some relevant  examples. \\
Einstein summation convention on repeated indices is used throughout the paper.

\section{General stochastic transformations for SDEs \label{section_1}}

Let $M, M'$ be  open subsets of $\mathbb{R}^n$.  Fixing a finite time horizon $[0,\mathcal{T}]$ with $\mathcal{T}\geq0$, we consider  a filtered probability space $\left(\Omega,\mathcal{F}, (\mathcal{F}_t)_{t \in [0,\mathcal{T}])} ,\mathbb{P}\right)$. Let $X$ be a continuous stochastic process taking values in $M$  and $W=\left(W^1,\dots,W^m\right)=\left(W^\alpha \right)$ be an  $m$-dimensional $\mathcal{F}_t$- Brownian motion and let $\mu : M \to \mathbb{R}^n$ and $\sigma : M \to \operatorname{Mat}\left(n,m\right)$ be two smooth functions.

\begin{definition}
The process $(X,W)$ solves (in a weak sense) the SDE with coefficients $\mu,\sigma$ (shortly solves the SDE $\left(\mu,\sigma\right)$) if, for all $t \in [0,\mathcal{T}]$,

\[X_{t}^i -X_0^i=\int_{0}^{t  } \mu ^i \left(X_s\right)ds + \int_{0}^{t }\sigma _\alpha ^i \left(X_s\right)dW_s^\alpha \quad i=1,\dots, n.\]
In the integral relation the processes $(|\mu\left(X_s\right)|^{\frac{1}{2}})_{s\in[0,\mathcal{T}]}$ and $\left(\sigma\left(X_s \right)\right)_{s\in[0,\mathcal{T}]}$ are supposed to belong to the class $M^2_{loc}([0,\mathcal{T}])$, i.e. to the class of processes  $\left(Y_s\right)_{s\in[0,\mathcal{T}]}$ that are progressively measurable  and such that $\int_{0}^{t}Y_s^2\left(\omega \right)ds < \infty \,  \operatorname{for} \, \operatorname{almost} \,  \operatorname{every} \,
 \omega \in \Omega$ and $t\in[0,\mathcal{T}]$.
\end{definition}

In the following, we recall the four different transformations for the solution processes to an SDE
that have been introduced in \cite{DMU2,DMU,DMU3}.

\subsection*{Spatial transformations }

Given an autonomous SDE $(\mu, \sigma)$ and the corresponding infinitesimal generator defined by
\begin{equation}\label{eqGENERATORL}
L= \frac{1}{2}\left(\sigma \sigma^T\right)^{ij} \partial_i \partial_j+ \mu^i \partial_i,
\end{equation}
we can consider a  diffeomorphism  $\Phi:M \to M'$ and its action on the component $X$ of the process.
Denoting by $\nabla \Phi : M \to \operatorname{Mat}\left(n,m\right)$ the Jacobian matrix
\[	 (\nabla \Phi)_j^i=\partial_j \Phi^i.\]
and  applying  It\^o formula (see, e.g., \cite{RoWi2000} Section 32 or \cite{Oksendal} Chapter 4) we have the following result.

\begin{proposition}\label{Prop1}
Given a diffeomorphism $\Phi : M \to M'$, if the process $\left(X,W\right)$ is solution to the  SDE $\left(\mu,\sigma\right)$, then the process $\left(\Phi\left(X\right),W\right)$ is solution to the SDE $\left(\mu',\sigma'\right)$ with
\begin{eqnarray*}
\mu'&=  L\left(\Phi\right)\circ \Phi^{-1}\\
\sigma'&=\left(\nabla \Phi \cdot \sigma\right)\circ\Phi^{-1}
\end{eqnarray*}
\end{proposition}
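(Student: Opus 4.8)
The plan is to apply the multidimensional It\^o formula to the transformed process $Y_t := \Phi(X_t)$ componentwise and then read off the drift and diffusion coefficients in terms of the new process. Since the statement is explicitly advertised as a consequence of It\^o's formula, the work is essentially a bookkeeping computation, and the only genuine care needed is in re-expressing everything as a function of $Y$ rather than $X$.

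First I would write $Y^i_t = \Phi^i(X_t)$ and apply It\^o's formula to the smooth function $\Phi^i$ along the semimartingale $X$. This gives
\begin{equation*}
dY^i_t = \partial_j \Phi^i(X_t)\, dX^j_t + \tfrac{1}{2}\,\partial_j\partial_k \Phi^i(X_t)\, d\langle X^j, X^k\rangle_t.
\end{equation*}
Substituting the SDE $(\mu,\sigma)$ for $dX^j_t = \mu^j(X_t)\,dt + \sigma^j_\alpha(X_t)\,dW^\alpha_t$, and using that the quadratic covariation is $d\langle X^j, X^k\rangle_t = (\sigma\sigma^T)^{jk}(X_t)\,dt$ (the $dt$ and $dt\,dW$ cross terms vanishing by the usual It\^o table), I would collect the finite-variation part and the martingale part separately. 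The $dt$-coefficient is then $\partial_j\Phi^i\,\mu^j + \tfrac{1}{2}(\sigma\sigma^T)^{jk}\partial_j\partial_k\Phi^i$, which is exactly $L(\Phi^i)$ evaluated at $X_t$, with $L$ the generator from \eqref{eqGENERATORL}; the $dW^\alpha$-coefficient is $\partial_j\Phi^i\,\sigma^j_\alpha = (\nabla\Phi\cdot\sigma)^i_\alpha$ at $X_t$.

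Second I would rewrite these coefficients as functions of the new state $Y_t$. Because $\Phi$ is a diffeomorphism onto $M'$, we have $X_t = \Phi^{-1}(Y_t)$, so replacing $X_t$ by $\Phi^{-1}(Y_t)$ turns the drift into $\bigl(L(\Phi)\circ\Phi^{-1}\bigr)(Y_t) = \mu'(Y_t)$ and the diffusion coefficient into $\bigl((\nabla\Phi\cdot\sigma)\circ\Phi^{-1}\bigr)(Y_t) = \sigma'(Y_t)$, matching the claimed $\mu'$ and $\sigma'$. This shows that $(\Phi(X),W)$ solves the SDE $(\mu',\sigma')$ in the same weak sense, driven by the \emph{same} Brownian motion $W$.

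The step I expect to require the most attention is not the algebra but the verification that the new coefficients, evaluated along $Y$, satisfy the integrability conditions built into the notion of a weak solution, namely that $(|\mu'(Y_s)|^{1/2})_s$ and $(\sigma'(Y_s))_s$ lie in $M^2_{loc}([0,\mathcal{T}])$. Progressive measurability is inherited because $\Phi$ and $\Phi^{-1}$ are continuous and $X$ is adapted and continuous; the local square-integrability follows from the corresponding property for $(\mu,\sigma)$ together with the smoothness of $\Phi$ on the compact time-traces of the continuous path, so that $\nabla\Phi$ and the second derivatives stay bounded along $X_s(\omega)$ for almost every $\omega$. Everything else is a direct transcription of It\^o's formula, so I would keep the exposition brief and simply point to the generator $L$ to identify the drift.
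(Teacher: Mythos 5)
Your proposal is correct and follows exactly the route the paper takes: the paper gives no separate proof of Proposition \ref{Prop1} but derives it directly from the multidimensional It\^o formula, identifying the $dt$-coefficient with $L(\Phi)$ and the $dW$-coefficient with $\nabla\Phi\cdot\sigma$, then composing with $\Phi^{-1}$ to express everything in the new state variable. Your additional check that the transformed coefficients retain the $M^2_{loc}$ integrability required by the weak-solution definition is a sensible touch that the paper leaves implicit.
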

We remark that the above transformation is only a spatial transformation which does not change the driving Brownian motion.

\subsection*{Random time changes }

 Given a smooth and strictly positive density
 $\eta : M \to \mathbb{R}_+$, we denote by $H_\eta $ the transformation\[ t'=\int_0^t \eta_s(X_s)ds.\]
 $H_\eta $ is a Markovian absolutely continuous random time change acting on both components of the solution process $(X,W)$. The inverse random time change can be defined as
 \begin{equation}
  \label{alpha}
 \alpha_t=\inf\left\{ s\in \mathbb{R}_+\middle| \int_0^s \eta_\tau(X_\tau)d\tau > t \right\}.
 \end{equation}
If $W'_t$  is the solution to
\[%
	dW'_t=\sqrt{\eta\left(X_t\right)}dW_t,
\]%
then $H_\eta\left(W'\right)$ is again a Brownian motion and the  following proposition holds.

\begin{proposition}\label{Prop2}
Let $\eta : M \to \mathbb{R}_+$ be a smooth and strictly positive function and let $\left(X,W\right)$ be a solution to the SDE $\left(\mu,\sigma\right)$. Then the process $\left(H_\eta\left(X\right),H_\eta \left(W'\right)\right)$ is solution to the SDE $\left(\mu',\sigma'\right)$ with
\begin{eqnarray*}
	\mu'&=&\frac{1}{\eta}\mu\\
	\sigma'&=&\frac{1}{\sqrt{\eta}}\sigma.
\end{eqnarray*}
\end{proposition}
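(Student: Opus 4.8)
The plan is to start from the defining integral equation for $(X,W)$ and carry it through the time change $H_\eta$, handling the finite-variation (drift) part and the martingale (diffusion) part separately. Write $\tau_t=\int_0^t\eta(X_s)\,ds$ for the new time, so that $\tau$ is continuous and, since $\eta>0$ is smooth and $X$ is continuous, strictly increasing; then the $\alpha$ of \eqref{alpha} is its genuine continuous inverse, with $\tau_{\alpha_{t'}}=t'$, and $H_\eta(X)_{t'}=X_{\alpha_{t'}}$, $H_\eta(W')_{t'}=W'_{\alpha_{t'}}$. First I would note that $\alpha_{t'}$ is, for each $t'$, an $(\mathcal{F}_t)$-stopping time (because $\{\alpha_{t'}\le s\}=\{\tau_s\ge t'\}\in\mathcal{F}_s$), which lets me work in the time-changed filtration $\mathcal{G}_{t'}=\mathcal{F}_{\alpha_{t'}}$ and guarantees that all time-changed processes are adapted.

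Next I would confirm that $H_\eta(W')$ is a $\mathcal{G}$-Brownian motion, as recalled just before the statement. Since $dW'_t=\sqrt{\eta(X_t)}\,dW_t$, each component of $W'$ has quadratic variation $\int_0^t\eta(X_s)\,ds=\tau_t$ and distinct components have vanishing covariation; the time-change theorem for continuous local martingales then gives, for $H_\eta(W')$, quadratic variations $\tau_{\alpha_{t'}}=t'$ and still-vanishing cross terms, so Lévy's characterisation identifies it as a standard $m$-dimensional Brownian motion.

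The core computation is the transformation of the two integrals. Setting $Y=H_\eta(X)$, for the drift I would change variables $s=\alpha_r$ in $\int_0^{\alpha_{t'}}\mu(X_s)\,ds$; since $d\alpha_r=\eta(X_{\alpha_r})^{-1}\,dr$ by the inverse-function rule, this becomes $\int_0^{t'}\mu(Y_r)/\eta(Y_r)\,dr$, yielding $\mu'=\mu/\eta$. For the diffusion I would first rewrite $dW_s=\eta(X_s)^{-1/2}\,dW'_s$, so that $\int_0^{\alpha_{t'}}\sigma(X_s)\,dW_s=\int_0^{\alpha_{t'}}\sigma(X_s)\,\eta(X_s)^{-1/2}\,dW'_s$, and then invoke the time-change formula for stochastic integrals (see, e.g., \cite{RoWi2000}) to pull $\alpha$ inside the It\^o integral, turning it into $\int_0^{t'}\sigma(Y_r)\,\eta(Y_r)^{-1/2}\,d\big(H_\eta(W')_r\big)$ and giving $\sigma'=\sigma/\sqrt{\eta}$. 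Assembling both pieces shows that $(H_\eta(X),H_\eta(W'))$ satisfies the integral equation for the SDE $(\mu/\eta,\sigma/\sqrt{\eta})$.

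I expect the main obstacle to be the rigorous justification of this last, stochastic-integral time-change step rather than the drift computation. One must check that the continuity of $\alpha$ prevents any jump from being introduced, that the transformed integrand $\sigma(Y)\,\eta(Y)^{-1/2}$ remains in the class $M^2_{loc}$ appearing in the definition of a solution, and, crucially, that the time-changed It\^o integral is taken precisely against the new Brownian motion $H_\eta(W')$ in the filtration $\mathcal{G}$ — that is, that the martingale time change commutes with the It\^o integral in exactly the form needed here. By contrast, the drift transformation is an ordinary Lebesgue change of variables and presents no difficulty.
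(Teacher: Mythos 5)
Your proof is correct: the stopping-time property of $\alpha_{t'}$, the L\'evy-characterization argument showing $H_\eta(W')$ is a $\mathcal{G}$-Brownian motion, the pathwise change of variables for the drift, and the time-change formula for It\^o integrals are exactly the ingredients needed, and your closing caveats (continuity of $\alpha$, membership of $\sigma(Y)\eta(Y)^{-1/2}$ in $M^2_{loc}$, and that the time-changed integral is taken against $H_\eta(W')$ in the time-changed filtration) are the right points to verify. Note that the paper itself states Proposition \ref{Prop2} without proof — it is recalled from \cite{DMU2,DMU,DMU3}, the surrounding text only asserting that $H_\eta(W')$ is again a Brownian motion — so your argument supplies precisely the standard proof the paper leaves implicit.
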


\subsection*{Random rotations}

Considering the well-known invariance under random rotations of Brownian motion (see \cite{DMU} and \cite{AlDeMoUg2018} for the general concept of gauge transformation), it is quite natural to consider also  the random rotation of the driving Brownian motion of the SDE. In fact, exploiting  the  notion of weak solution and
L\'evy characterization of Brownian motion, we obtain the following result.

\begin{proposition}
	Let $B : M \to SO \left(m\right)$ be a smooth function and let $\left(X,W\right)$ be a solution to the SDE $\left(\mu,\sigma\right)$. Then $\left(X,W'\right)$, where
	\[%
	dW'_t=B\left(X_t\right)\cdot dW_t,
	\]%
	is a solution to the SDE $\left(\mu',\sigma'\right)$ with
\begin{eqnarray*}	\mu'&=&\mu,\\
	\sigma'&=&\sigma \cdot B^{-1}.
	\end{eqnarray*}
	
\end{proposition}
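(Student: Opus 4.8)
The plan is to invoke L\'evy's characterization of Brownian motion to show that the rotated process $W'$ is again an $m$-dimensional $\mathcal{F}_t$-Brownian motion, and then to substitute the inverted relation $dW_t = B^{-1}(X_t)\cdot dW'_t$ into the original equation. First I would note that each component $W'^\alpha_t = \int_0^t B^\alpha_\beta(X_s)\, dW^\beta_s$ is a well-defined continuous local martingale vanishing at $t=0$: the integrand $B^\alpha_\beta(X_s)$ is progressively measurable, being a continuous function of the process $X$, and it is bounded because $SO(m)$ is compact, so it lies in the integrability class required for the It\^o integral.

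The key step is the computation of the quadratic covariation of the components of $W'$. Using the standard rule for the covariation of It\^o integrals together with $[W^\gamma, W^\delta]_t = \delta^{\gamma\delta}\, t$, I obtain
\[
[W'^\alpha, W'^\beta]_t = \int_0^t B^\alpha_\gamma(X_s)\, B^\beta_\delta(X_s)\, \delta^{\gamma\delta}\, ds = \int_0^t \left(B B^T\right)^{\alpha\beta}(X_s)\, ds.
\]
Here the orthogonality of $B$ enters decisively: since $B(x) \in SO(m)$ we have $B(x)\, B(x)^T = I$ for every $x \in M$, whence $[W'^\alpha, W'^\beta]_t = \delta^{\alpha\beta}\, t$. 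By L\'evy's characterization, a continuous local martingale with this deterministic quadratic covariation is an $m$-dimensional $\mathcal{F}_t$-Brownian motion, so $W'$ qualifies as an admissible driving noise.

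It then remains to rewrite the dynamics of $X$ in terms of $W'$. Because $B$ takes values in $SO(m)$, its inverse coincides with its transpose and is again smooth and bounded, so the defining relation can be inverted to $dW_t = B^{-1}(X_t)\cdot dW'_t$. Substituting $dW^\alpha_t = (B^{-1})^\alpha_\beta(X_t)\, dW'^\beta_t$ into $dX^i_t = \mu^i(X_t)\, dt + \sigma^i_\alpha(X_t)\, dW^\alpha_t$ and contracting indices yields $dX^i_t = \mu^i(X_t)\, dt + (\sigma B^{-1})^i_\beta(X_t)\, dW'^\beta_t$, which shows that $(X, W')$ solves the SDE $(\mu', \sigma')$ with $\mu' = \mu$ and $\sigma' = \sigma\cdot B^{-1}$. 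I do not anticipate a real difficulty: the only genuine content is the passage from ``continuous local martingale'' to ``Brownian motion'', which is exactly where compactness of $SO(m)$ and the identity $B B^T = I$ are used, while everything else is a direct substitution that is legitimate within the weak-solution formulation, where no adaptedness of $X$ to the new noise $W'$ is required.
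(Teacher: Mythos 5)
Your proof is correct and follows exactly the route the paper itself indicates: the paper gives no detailed argument, stating only that the result follows from the notion of weak solution and L\'evy's characterization of Brownian motion, which is precisely your strategy. Your write-up simply fills in the details (local-martingale property, the covariation computation using $BB^{T}=I$, and the substitution $dW_t=B^{-1}(X_t)\,dW'_t$, which is justified by associativity of the It\^o integral), so there is no divergence from the paper's approach.
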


\subsection*{Random changes of measure}
In order to further enlarge our class of transformations, we can exploit Girsanov theorem in order to introduce also a random change of the probability measure under which the driven process is a Brownian motion.\\

Given a process $\left(\theta_s\right)_{s\in[0,\mathcal{T}]} \in M^2_{loc}[0,\mathcal{T}] $, let us define the process  $\left(Z_t\right)_{t\in[0,\mathcal{T}]}$ by setting
\begin{equation}\label{supermartingale}
Z_t=Z_t(\theta):=\exp\left\{\int_{0}^{t}\theta_sdW_s -\frac{1}{2}\int_{0}^{t}\theta_s^2ds \right\}.
\end{equation}
An application of It\^o formula gives
\[ dZ_t=Z_t\theta_tdW_t,\]
which says that $Z$ is a local martingale.
We recall the fundamental result allowing the change of a probability measure into an equivalent one.
\begin{theorem}[Girsanov's theorem]
Let $\left(Z_t\right)_{t\in[0,\mathcal{T}]}$ be the exponential supermartingale defined  in \eqref{supermartingale}. If $\left(Z_t\right)_{t\in[0,\mathcal{T}]}$ is a $\mathbb{P}$-martingale, then the process  $\left(\hat W_t\right)_{t \in [0,\mathcal{T}]}$ given by
\[	\hat W_t=W_t-\int_{0}^{t} \theta_sds\]
is an $ (\mathcal{F})_t$-Brownian motion with respect to the probability measure  $\mathbb{Q}$, where
\[ \left.\frac{\md\mathbb{Q}}{\md\mathbb{P}}\right|_{\mathcal{F}_{\mathcal{T}}}=Z_\mathcal{T}.\]
\end{theorem}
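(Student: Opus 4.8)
The plan is to verify the defining properties of a $\mathbb{Q}$-Brownian motion through L\'evy's characterization, reducing the whole statement to a single local-martingale property. First I would observe that, since $Z$ is a strictly positive $\mathbb{P}$-martingale with $\mathbb{E}_\mathbb{P}[Z_\mathcal{T}]=Z_0=1$, the prescription $\left.\frac{d\mathbb{Q}}{d\mathbb{P}}\right|_{\mathcal{F}_\mathcal{T}}=Z_\mathcal{T}$ defines a genuine probability measure $\mathbb{Q}$ equivalent to $\mathbb{P}$, with density process $Z_t=\mathbb{E}_\mathbb{P}[Z_\mathcal{T}\mid\mathcal{F}_t]$. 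The process $\hat W$ is continuous with $\hat W_0=0$, and since it differs from $W$ only by the finite-variation term $\int_0^{\cdot}\theta_s\,ds$, the quadratic covariation is unchanged: $[\hat W^i,\hat W^j]_t=[W^i,W^j]_t=\delta^{ij}t$, $\mathbb{P}$-a.s.\ and hence $\mathbb{Q}$-a.s.\ by equivalence. Thus the entire content of the theorem reduces to showing that each $\hat W^i$ is a continuous local martingale under $\mathbb{Q}$.

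The key tool I would invoke is the Bayes-type characterization of $\mathbb{Q}$-martingales through the density process: an adapted process $N$ is a $\mathbb{Q}$-local martingale if and only if the product $NZ$ is a $\mathbb{P}$-local martingale. This equivalence is precisely where the martingale hypothesis on $Z$ enters, since it guarantees that conditional expectations under the two measures are linked by
\[
\mathbb{E}_\mathbb{Q}[Y\mid\mathcal{F}_s]=Z_s^{-1}\,\mathbb{E}_\mathbb{P}[YZ_t\mid\mathcal{F}_s]\qquad\text{for }\mathcal{F}_t\text{-measurable }Y,\ s\le t.
\]
Granting this, it remains to show that $\hat W^i Z$ is a $\mathbb{P}$-local martingale for each $i$. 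I would compute its It\^o differential by integration by parts, using $dZ_t=Z_t\theta^\alpha_t\,dW^\alpha_t$ from \eqref{supermartingale} and $d\hat W^i_t=dW^i_t-\theta^i_t\,dt$:
\[
d\!\left(\hat W^i_t Z_t\right)=\hat W^i_t\,dZ_t+Z_t\,d\hat W^i_t+d[\hat W^i,Z]_t.
\]
The covariation term is $d[\hat W^i,Z]_t=d[W^i,Z]_t=Z_t\theta^\alpha_t\,d[W^i,W^\alpha]_t=Z_t\theta^i_t\,dt$, which exactly cancels the drift $-Z_t\theta^i_t\,dt$ produced by $Z_t\,d\hat W^i_t$. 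Hence
\[
d\!\left(\hat W^i_t Z_t\right)=\hat W^i_t Z_t\theta^\alpha_t\,dW^\alpha_t+Z_t\,dW^i_t,
\]
which carries no drift, so $\hat W^i Z$ is a $\mathbb{P}$-local martingale. By the Bayes characterization $\hat W^i$ is a $\mathbb{Q}$-local martingale, and by L\'evy's theorem $\hat W$ is an $m$-dimensional $\mathbb{Q}$-Brownian motion.

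The main obstacle is not the formal computation, whose drift cancellation is the heart of the matter, but the careful handling of integrability and localization. Because $Z$ and $\hat W^i$ are controlled only as local objects, one must introduce a localizing sequence of stopping times reducing both $\hat W^i Z$ and $Z$ to true $\mathbb{P}$-martingales before passing conditional expectations through the Bayes formula, and then check that the local martingale property survives the limit. Equally, the validity of the abstract equivalence rests squarely on the assumed genuine martingale property of $Z$; without it $\mathbb{Q}$ would fail to be a probability measure and the argument would collapse. Since this is a classical result, in practice I would record the statement and refer to a standard treatment (e.g.\ \cite{RoWi2000} or \cite{Oksendal}) for these measure-theoretic details.
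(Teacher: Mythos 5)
Your proposal is mathematically correct, but note that the paper itself does not prove this statement at all: its entire ``proof'' is the citation to \cite{RoWi2000}, Theorem 38.5. What you have written out is, in essence, the standard argument underlying that citation: define $\mathbb{Q}$ via the density $Z_\mathcal{T}$ (a genuine probability measure precisely because $Z$ is assumed to be a true $\mathbb{P}$-martingale with $\mathbb{E}_{\mathbb{P}}[Z_\mathcal{T}]=Z_0=1$), use the Bayes-rule equivalence to reduce the $\mathbb{Q}$-local-martingale property of $\hat W^i$ to the $\mathbb{P}$-local-martingale property of $\hat W^i Z$, verify the latter by the It\^o product rule --- where the covariation term $Z_t\theta^i_t\,dt$ exactly cancels the drift $-Z_t\theta^i_t\,dt$ --- and conclude with L\'evy's characterization, since the finite-variation correction $\int_0^\cdot \theta_s\,ds$ leaves the quadratic covariations $[\hat W^i,\hat W^j]_t=\delta^{ij}t$ unchanged. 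Your computation is right, and you correctly identify the two delicate points: the localization needed to push conditional expectations through the Bayes formula, and the fact that the true-martingale hypothesis on $Z$ is exactly what makes $\mathbb{Q}$ a probability measure. So relative to the paper you supply strictly more content: the paper outsources the entire argument to the reference, while you give a self-contained sketch whose only omissions are the routine measure-theoretic details that you explicitly acknowledge and for which you, like the paper, would defer to a standard treatment.
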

\begin{proof}
The proof can be found  in \cite{RoWi2000}, Theorem 38.5.
\end{proof}

In order to guarantee that the supermartingale $Z_t$ is a $\mathbb{P}$-(global) martingale one can use the well known Novikov condition (see \cite{ReYo1999}, Chapter VIII, Proposition 1.14). However, in this paper, we choose a different approach (see \cite{DMU3}),  based on the non explosiveness property both of the original SDE and of the transformed one.

\begin{definition} Let $\mu\colon{M}\to{\mathbb{R}^n}$ and $\sigma\colon{M}\to{Mat(n,m)}$ be two smooth functions. The SDE $(\mu,\sigma)$ is called \emph{non explosive} if any solution $(X,W)$ to $(\mu,\sigma)$ is defined for all times $t\geq0$.\\
A smooth vector field $h$ is called \emph{non explosive} for the non explosive SDE $(\mu,\sigma)$ if the SDE $(\mu+\sigma\cdot{h},\sigma)$ is a non explosive SDE.\\
A positive smooth function $\eta$ is called a \emph{non explosive} time change for the non explosive SDE $(\mu,\sigma)$ if the SDE $\Bigr(\frac{\mu}{\eta},\frac{\sigma}{\sqrt{\eta}}\Bigl)$ is non explosive.
\end{definition}

\begin{lemma}
\label{lemm:lea}
Let $(\mu,\sigma)$ be a non explosive SDE admitting a weak solution $(X,W)$ and let $h\colon{M}\to\mathbb{R}^n$ be a smooth non explosive vector field. Then the exponential supermartingale $(Z_t)_{t\in[0,\mathcal{T}]}$ associated with $\theta_t=h(X_t)$ is a $\mathbb{P}$-(global) martingale.
\end{lemma}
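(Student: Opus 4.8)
The plan is to reduce the global martingale property of $Z$ to the single scalar identity $\mathbb{E}_{\mathbb{P}}[Z_t]=1$ for every $t\in[0,\mathcal{T}]$. Since $Z$ is a nonnegative local martingale with $Z_0=1$, it is automatically a supermartingale, so $\mathbb{E}_{\mathbb{P}}[Z_t]\le 1$ holds for free and the whole difficulty is the reverse inequality. First I would localize $X$ by the exit times $\tau_n=\inf\{s:\ X_s\notin K_n\}$, where $(K_n)$ is an exhaustion of $M$ by relatively compact open sets with $\overline{K_n}\subset K_{n+1}$. On each $K_n$ the vector field $h$ is bounded, hence $\theta_\cdot=h(X_\cdot)$ is bounded on $[0,\tau_n]$, Novikov's condition holds for the stopped integrand, and $(Z_{t\wedge\tau_n})_t$ is a genuine uniformly integrable martingale; in particular $\mathbb{E}_{\mathbb{P}}[Z_{t\wedge\tau_n}]=1$ for every $n$.

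Next I would split this identity as
\[
1=\mathbb{E}_{\mathbb{P}}[Z_{t\wedge\tau_n}]=\mathbb{E}_{\mathbb{P}}\bigl[Z_t\,\mathbf{1}_{\{\tau_n>t\}}\bigr]+\mathbb{E}_{\mathbb{P}}\bigl[Z_{\tau_n}\,\mathbf{1}_{\{\tau_n\le t\}}\bigr].
\]
Because the original SDE $(\mu,\sigma)$ is non explosive, $\tau_n\to\infty$ $\mathbb{P}$-a.s., so for fixed $t$ one has $Z_t\,\mathbf{1}_{\{\tau_n>t\}}\uparrow Z_t$ and monotone convergence gives $\mathbb{E}_{\mathbb{P}}[Z_t\,\mathbf{1}_{\{\tau_n>t\}}]\to\mathbb{E}_{\mathbb{P}}[Z_t]$. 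Hence everything comes down to showing that the defect $\mathbb{E}_{\mathbb{P}}[Z_{\tau_n}\,\mathbf{1}_{\{\tau_n\le t\}}]$ tends to $0$.

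The place where the non-explosiveness of the \emph{tilted} SDE enters is the identification of this defect with a probability under the Girsanov-changed measure. Defining $\mathbb{Q}_n$ on $\mathcal{F}_t$ by $\md\mathbb{Q}_n/\md\mathbb{P}=Z_{t\wedge\tau_n}$ (legitimate, since $Z_{\cdot\wedge\tau_n}$ is a true martingale), and using $Z_{\tau_n}\mathbf{1}_{\{\tau_n\le t\}}=Z_{t\wedge\tau_n}\mathbf{1}_{\{\tau_n\le t\}}$ with $\{\tau_n\le t\}\in\mathcal{F}_{t\wedge\tau_n}$, I would rewrite the defect as $\mathbb{Q}_n(\tau_n\le t)$. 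By Girsanov's theorem applied up to $\tau_n$, under $\mathbb{Q}_n$ the stopped process $(X_{\cdot\wedge\tau_n},\hat W_{\cdot\wedge\tau_n})$, with $\hat W=W-\int_0^\cdot\theta_s\,ds$, solves the tilted SDE $(\mu+\sigma\cdot h,\sigma)$ up to $\tau_n$. An optional-stopping computation then shows that the family $(\mathbb{Q}_n)$ is consistent, namely $\mathbb{Q}_{n+1}=\mathbb{Q}_n$ on $\mathcal{F}_{t\wedge\tau_n}$.

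The main obstacle, and where I expect the real work to lie, is the passage to the limit. The clean route is to assemble the consistent family $(\mathbb{Q}_n)$ into a single measure $\mathbb{Q}$ on path space (a Kolmogorov/Tulcea-type extension, most transparently on canonical Wiener space), under which $X$ is a solution of $(\mu+\sigma\cdot h,\sigma)$ on the stochastic interval $[0,\tau_\infty)$ with $\tau_\infty=\lim_n\tau_n$. Since by hypothesis the tilted SDE is non explosive, every such solution has infinite lifetime, i.e.\ $\tau_\infty=\infty$ $\mathbb{Q}$-a.s. Because $\{\tau_n\le t\}$ decreases to $\{\tau_\infty\le t\}$ and $\mathbb{Q}_n$ agrees with $\mathbb{Q}$ on $\mathcal{F}_{t\wedge\tau_n}\ni\{\tau_n\le t\}$, continuity from above yields $\mathbb{Q}_n(\tau_n\le t)=\mathbb{Q}(\tau_n\le t)\downarrow\mathbb{Q}(\tau_\infty\le t)=0$. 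Feeding this back, the defect vanishes, $\mathbb{E}_{\mathbb{P}}[Z_t]=1$ for all $t$, and the supermartingale $Z$ is a true $\mathbb{P}$-martingale. The delicate points to verify carefully are precisely this consistency/extension step and the check that the limiting $\mathbb{Q}$ genuinely realizes a maximal solution of the tilted SDE, so that the non-explosion hypothesis can be invoked without circularity.
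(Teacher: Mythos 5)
Your proof is correct, and it is essentially the argument the paper intends: the paper states this lemma without giving a proof, deferring to \cite{DMU3}, whose approach is exactly the one you reconstruct --- localization by the exit times $\tau_n$ of $X$ from an exhausting sequence of relatively compact sets, Novikov on each stopped piece, and identification of the martingale defect $\mathbb{E}_{\mathbb{P}}[Z_{\tau_n}\mathbf{1}_{\{\tau_n\le t\}}]=\mathbb{Q}_n(\tau_n\le t)$ with an exit (ultimately, explosion) probability for the tilted SDE $(\mu+\sigma\cdot h,\sigma)$, which vanishes by the non-explosiveness hypothesis. The one genuinely delicate step is the one you flag yourself, namely assembling the consistent family $(\mathbb{Q}_n)$ into a single measure $\mathbb{Q}$; this is legitimate here because the statement concerns weak solutions, so one may pass to canonical (Polish) path space where such extensions exist, and the paper's strong definition of non-explosiveness (\emph{every} solution of the tilted SDE is global) is exactly what lets you conclude $\tau_\infty=\infty$ $\mathbb{Q}$-a.s.\ without needing uniqueness in law.
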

The following result shows how this probability measure change works.
\begin{theorem}
\label{theo:theg}
Let $(X,W)$ be a solution to the non explosive SDE $(\mu,\sigma)$ on the probability space $(\Omega,\mathcal{F},\mathbb{P})$ and let $h$ be a smooth non explosive vector field for $(\mu,\sigma)$. Then $(X,W')$ is a solution to the SDE $(\mu',\sigma')=(\mu+\sigma\cdot{h},\sigma)$ on the probability space $(\Omega,\mathcal{F},\mathbb{Q})$, where

\begin{align*}
W'_t &=  -\int_0^t h(X_s)ds+W_t,\\
\left.\frac{d\mathbb{Q}}{d\mathbb{P}}\right|_{\mathcal{F}_{\mathcal{T}}} &=  \exp \left( \int_0^\mathcal{T} h_j(X_s)dW_s^j-\frac{1}{2}\int_0^\mathcal{T} \sum_{j=1}^m(h_j(X_s))^2ds \right).
\end{align*}

\end{theorem}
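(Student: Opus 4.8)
The plan is to establish Theorem~\ref{theo:theg} by combining the change-of-measure structure already guaranteed by Lemma~\ref{lemm:lea} with the weak-solution characterization of SDEs via the martingale problem. The key observation is that once we know $Z_t(\theta)$ with $\theta_t = h(X_t)$ is a genuine $\mathbb{P}$-martingale, Girsanov's theorem hands us a new Brownian motion $\hat W$ under $\mathbb{Q}$ almost for free; the real content of the theorem is identifying the SDE that $(X,W')$ solves under this new measure.

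\emph{Step 1 (Martingale property and admissibility of the measure change).} First I would invoke Lemma~\ref{lemm:lea}: since $(\mu,\sigma)$ is non explosive and $h$ is a non explosive vector field, the process $\theta_t = h(X_t)$ lies in $M^2_{loc}[0,\mathcal{T}]$ (it is progressively measurable as $X$ is continuous and adapted, and the non-explosiveness keeps the relevant integrals finite), and the associated exponential $(Z_t)$ is a $\mathbb{P}$-martingale. This is precisely the hypothesis needed to apply Girsanov's theorem as stated in the excerpt.

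\emph{Step 2 (Apply Girsanov and compute the new Brownian motion).} With the martingale property in hand, I would define $\mathbb{Q}$ by $\left.\frac{d\mathbb{Q}}{d\mathbb{P}}\right|_{\mathcal{F}_{\mathcal{T}}} = Z_{\mathcal{T}}(\theta)$, which, upon substituting $\theta_s = h(X_s)$, is exactly the Radon--Nikodym density displayed in the statement. Girsanov's theorem then asserts that $\hat W_t = W_t - \int_0^t \theta_s\,ds = W_t - \int_0^t h(X_s)\,ds$ is an $(\mathcal{F}_t)$-Brownian motion under $\mathbb{Q}$. Setting $W'_t := \hat W_t$ matches the claimed formula for $W'$.

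\emph{Step 3 (Identify the transformed SDE).} The final step is purely algebraic: I would rewrite the original integral equation $dX_t = \mu(X_t)\,dt + \sigma_\alpha(X_t)\,dW_t^\alpha$ by substituting $dW_t = dW'_t + h(X_t)\,dt$. This yields
\begin{equation*}
dX_t = \mu(X_t)\,dt + \sigma(X_t)\cdot\bigl(dW'_t + h(X_t)\,dt\bigr) = \bigl(\mu(X_t) + \sigma(X_t)\cdot h(X_t)\bigr)dt + \sigma(X_t)\cdot dW'_t,
\end{equation*}
so that $(X,W')$ solves the SDE with drift $\mu' = \mu + \sigma\cdot h$ and diffusion $\sigma' = \sigma$, exactly as asserted. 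Since $(X,W')$ is being presented as a \emph{weak} solution, it suffices to verify that $W'$ is a $\mathbb{Q}$-Brownian motion (Step~2) and that the integral relation holds $\mathbb{Q}$-a.s.; because $\mathbb{Q}\sim\mathbb{P}$, the $\mathbb{P}$-a.s. integral identity for $X$ transfers to a $\mathbb{Q}$-a.s. identity, and the integrability of $\sigma(X)$ in $M^2_{loc}$ is likewise preserved under the equivalent measure.

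\emph{Main obstacle.} The only genuinely delicate point is ensuring the \textbf{global} martingale property of $(Z_t)$ rather than merely the local one; this is what Lemma~\ref{lemm:lea} is designed to supply through the non-explosiveness hypothesis on both $(\mu,\sigma)$ and $(\mu + \sigma\cdot h,\sigma)$, circumventing the need for a Novikov-type condition. Everything downstream of that martingale property is a direct application of Girsanov followed by an elementary substitution, so I expect no further difficulty once Lemma~\ref{lemm:lea} is in place.
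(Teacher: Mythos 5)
Your proposal is correct and is essentially the argument the paper intends: the paper states Theorem \ref{theo:theg} without an explicit proof (deferring to \cite{DMU3}), but the two ingredients it sets up immediately beforehand — Lemma \ref{lemm:lea} for the global martingale property and the quoted Girsanov theorem — are exactly what you assemble, followed by the standard substitution $dW_t = dW'_t + h(X_t)\,dt$. Your closing remarks on transferring the $\mathbb{P}$-a.s. integral identity and the $M^2_{loc}$ integrability to the equivalent measure $\mathbb{Q}$ address the only points that genuinely need care, so there is no gap.
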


\begin{definition}[General stochastic transformation] Given two open subsets $M$ and $M'$ of $\mathbb{R}^n$, a diffeomorphism $\Phi\colon{M}\to{M'}$ and the smooth functions $B\colon{M}\to{SO(m)}$, $\eta\colon{M}\to\mathbb{R}_+$ and $h\colon{M}\to\mathbb{R}^m$, we call $T=(\Phi,B,\eta,h)$ a \emph{(weak finite) general stochastic transformation}. If $B=I, \eta=1$ and $h=0$ we call $T$ a \emph{strong (finite)  stochastic transformation}.
\end{definition}
It is important to  remember that the previous transformation cannot be applied to a generic SDE and that non explosiveness conditions must be taken into account. In the following definition we describe how the random transformation $T$ acts on the solution process.
\begin{definition}
\label{defi:dea}
Let $T=(\Phi,B,\eta,h)$ be a general stochastic transformation. Let $X$ be a continuous stochastic process taking values in $M$ and $W$ be an $m$-dimensional Brownian motion on the space $(\Omega,\mathcal{F},\mathbb{P})$ such that the pair $(X,W)$ is a solution to the non explosive SDE $(\mu,\sigma)$. Given two smooth non explosive functions $h$ and $\eta$ for the same SDE, we can define the process $P_T(X,W)=(P_T(X),P_T(W))$, where $P_T(X)$ takes values in $M'$ and $P_T(W)$ is a Brownian motion on the space $(\Omega,\mathcal{F},\mathbb{Q})$.
The process components are given by
\begin{align*}
X'=P_T(X)&=\Phi(H_\eta(X)), \notag \\
W'=P_T(W)&=H_\eta(\tilde W), \notag
\end{align*}
where $\tilde W_t$ satisfies
\begin{align*}
d\tilde W_t&=\sqrt{\eta(X_t)}B(X_t)(dW_t-h(X_t)dt), \notag
\end{align*}
and
\begin{align}
\frac{d\mathbb{Q}}{d\mathbb{P}}\Bigg|_{\mathcal{F}_{\mathcal{T}}}&=\exp\Bigr(\int_0^\mathcal{T}h_j(X_s)dW_s^j-\frac{1}{2}\int_0^\mathcal{T}\sum_{j=1}^m(h_j(X_s))^2ds\Bigl). \label{eq:definitionQ}
\end{align}
We call $P_T(X,W)$ the \emph{transformed process} of $(X,W)$ with respect to $T$ and we call the function $P_T$ the \emph{process transformation} associated with $T$.
\end{definition}
If we focus only on the SDE and the action $E_T$ of the stochastic transformation $T$, we can define the transformed SDE $E_T(\mu,\sigma)$ without making any request on the non-explosiveness of the solution process.
\begin{definition}
\label{defi:deb}
Let $T=(\Phi,B,\eta,h)$ be a general stochastic transformation. Given two smooth functions $\mu\colon{M}\to{\mathbb{R}^n}$ and $\sigma\colon{M}\to{Mat(n,m)}$, we define the SDE $E_T(\mu,\sigma)=(E_T(\mu),E_T(\sigma))$ on $M'$  as
\begin{align*}
E_T(\mu)&=\Bigr(\frac{1}{\eta}[L(\Phi)+\nabla\Phi\cdot\sigma\cdot{h}]\Bigl)\circ\Phi^{-1}, \notag \\
E_T(\sigma)&=\Bigr(\frac{1}{\sqrt{\eta}}\nabla\Phi\cdot\sigma\cdot{B^{-1}}\Bigl)\circ\Phi^{-1}. \notag
\end{align*}
We call $E_T(\mu,\sigma)$ the \emph{transformed SDE} of $(\mu,\sigma)$ with respect to $T$ and we call the map $E_T$ the \emph{SDE transformation} associated with $T$.
\end{definition}
\begin{remark}
We note that a specific order according to which the transformations are applied was chosen.
\end{remark}
\begin{theorem}
\label{theo:thea}
Given a stochastic transformation $T=(\Phi,B,\eta,h)$ and a solution $(X,W)$ to the non explosive SDE $(\mu,\sigma)$ such that $E_T(\mu,\sigma)$ is non explosive, then $P_T(X,W)$ is solution to the SDE $E_T(\mu,\sigma)$.
\end{theorem}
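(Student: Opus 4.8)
The plan is to factor the composite process transformation $P_T$ into the four elementary transformations recalled above and to check that their coefficient actions compose to $E_T(\mu,\sigma)$ of Definition \ref{defi:deb}. The order is dictated by reading the defining relation $d\tilde W_t=\sqrt{\eta(X_t)}B(X_t)(dW_t-h(X_t)dt)$ from the inside out: first the Girsanov drift $dW_t-h(X_t)dt$, then the rotation $B$, then the scaling $\sqrt\eta$ feeding the time change $H_\eta$, and finally the diffeomorphism $\Phi$ acting on the space variable.

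Concretely I would carry out four steps, tracking the evolving pair $(\mu,\sigma)$. \emph{(i) Measure change.} Provided $h$ is a non explosive vector field for $(\mu,\sigma)$, Lemma \ref{lemm:lea} makes $Z_t$ of \eqref{supermartingale} with $\theta_t=h(X_t)$ a true $\mathbb P$-martingale, so \eqref{eq:definitionQ} defines a probability measure $\mathbb Q$ and Theorem \ref{theo:theg} gives that $(X,\hat W)$, with $\hat W_t=W_t-\int_0^t h(X_s)ds$, solves $(\mu+\sigma\cdot h,\sigma)$ under $\mathbb Q$. \emph{(ii) Rotation.} The random rotation proposition applied to $(X,\hat W)$ with $dW''_t=B(X_t)\,d\hat W_t$ produces a solution of $(\mu+\sigma\cdot h,\sigma\cdot B^{-1})$. \emph{(iii) Time change.} Setting $dW'''_t=\sqrt{\eta(X_t)}\,dW''_t$ one recognizes $W'''=\tilde W$, and Proposition \ref{Prop2} shows that $(H_\eta(X),H_\eta(\tilde W))$ solves $\bigl(\tfrac1\eta(\mu+\sigma\cdot h),\tfrac1{\sqrt\eta}\sigma\cdot B^{-1}\bigr)$, the spatial coefficients being unchanged as functions on $M$ since $H_\eta$ only reparametrizes time. \emph{(iv) Diffeomorphism.} Proposition \ref{Prop1} applied with $\Phi$ then sends this SDE to $(E_T(\mu),E_T(\sigma))$ while leaving $H_\eta(\tilde W)$ as the driving Brownian motion, yielding exactly $X'=\Phi(H_\eta(X))$ and $W'=H_\eta(\tilde W)$ of Definition \ref{defi:dea}.

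It remains to match the coefficients, which is purely algebraic. For the diffusion part, $B\in SO(m)$ gives $B^{-1}=B^T$, hence $(\sigma B^{-1})(\sigma B^{-1})^T=\sigma\sigma^T$: the rotation does not touch the second-order part, and Step (iv) reproduces $E_T(\sigma)=\bigl(\tfrac1{\sqrt\eta}\nabla\Phi\cdot\sigma\cdot B^{-1}\bigr)\circ\Phi^{-1}$. For the drift, letting $L_3$ denote the generator of the SDE obtained after Step (iii), the same orthogonality yields $L_3=\tfrac1\eta\bigl(L+(\sigma\cdot h)^i\partial_i\bigr)$, so that $L_3(\Phi)=\tfrac1\eta\bigl(L(\Phi)+\nabla\Phi\cdot\sigma\cdot h\bigr)$ and $\mu'=L_3(\Phi)\circ\Phi^{-1}=E_T(\mu)$, as required.

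The main obstacle is not this bookkeeping but justifying Step (i): the hypotheses furnish non explosiveness of $(\mu,\sigma)$ and of $E_T(\mu,\sigma)$, whereas Lemma \ref{lemm:lea} requires $h$ to be non explosive \emph{for $(\mu,\sigma)$}, i.e.\ non explosiveness of $(\mu+\sigma\cdot h,\sigma)$. The bridge is that the rotation, the time change by $\eta$ and the diffeomorphism $\Phi$ preserve non explosiveness — $\Phi$ and $B$ trivially, $\eta$ through the notion of non explosive time change — so running these invariances backwards from $E_T(\mu,\sigma)$ recovers non explosiveness of $(\mu+\sigma\cdot h,\sigma)$. This is precisely what secures the true-martingale property of $Z$ and hence the legitimacy of the change of measure; once it is in place, the four elementary propositions assemble mechanically into the claim.
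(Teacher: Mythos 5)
Your four-step factorization of $P_T$ (Girsanov drift removal, rotation, time rescaling feeding $H_\eta$, then the diffeomorphism $\Phi$) and the coefficient bookkeeping are correct: since $B^{-1}=B^T$, the rotation leaves the second-order part untouched, the generator after step (iii) is $\tfrac{1}{\eta}\bigl(L+(\sigma\cdot h)^i\partial_i\bigr)$, and applying Proposition \ref{Prop1} then reproduces exactly $E_T(\mu)$ and $E_T(\sigma)$ of Definition \ref{defi:deb}. This is the natural skeleton of the argument (the paper itself states the theorem without proof, importing it from \cite{DMU3}), so the core of your proposal is sound.

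The genuine gap is the ``bridge'' in your last paragraph: non explosiveness does \emph{not} pull back through a time change. The correspondence between a solution of $(\mu+\sigma\cdot h,\sigma)$ on $[0,\zeta)$ and the corresponding solution of the time-changed SDE on $[0,\zeta')$ has $\zeta'=\int_0^\zeta\eta(X_s)ds$, and $\zeta'=\infty$ is perfectly compatible with $\zeta<\infty$ when $\eta$ blows up along the exploding trajectory; this is precisely why the paper introduces the one-directional notion of ``non explosive time change'' as a definition rather than a theorem. Concretely, take $M=\mathbb{R}$, $(\mu,\sigma)=(0,1)$, $\Phi=\mathrm{id}$, $B=1$, $h(x)=x^3$, $\eta(x)=1+x^4$. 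Then $(\mu,\sigma)$ is non explosive, and $E_T(\mu,\sigma)=\bigl(\tfrac{x^3}{1+x^4},\tfrac{1}{\sqrt{1+x^4}}\bigr)$ has bounded coefficients, hence is non explosive, so both stated hypotheses hold; yet $(\mu+\sigma\cdot h,\sigma)=(x^3,1)$ explodes with positive probability. Correspondingly $Z_t=\exp\bigl(\int_0^t X_s^3dW_s-\tfrac12\int_0^t X_s^6ds\bigr)$ is a strict supermartingale (its expectation equals the survival probability of the drift-$x^3$ diffusion), so the measure $\mathbb{Q}$ of \eqref{eq:definitionQ} is not even a probability measure and $P_T(X,W)$ is ill-defined. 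So your step (i) cannot be justified from the two stated non explosiveness hypotheses. The resolution is that there is nothing to derive: Definition \ref{defi:dea} only defines $P_T(X,W)$ under the standing assumption that $h$ and $\eta$ are non explosive for $(\mu,\sigma)$, and the theorem must be read with that assumption in force. Replace your bridge by an appeal to this hypothesis; then Lemma \ref{lemm:lea} and Theorem \ref{theo:theg} apply, and the remainder of your argument goes through unchanged.
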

In order to better understand the nature of general stochastic transformations we can take advantage of Lie group theory.
Let $G=SO(m)\times\mathbb{R}_+\times\mathbb{R}^m$ be the group of rototranslations with a scaling factor, whose elements $g=(B,\eta,h)$ can be identified with the matrices
\[
\begin{pmatrix}
\sqrt{\eta}B^{-1} & h \\
0 & 1
\end{pmatrix}
\]
If we consider the trivial principal bundle $\pi\colon{M\times{G}}\to{M}$ with structure group $G$, we can define the action of $G$ on $M\times{G}$ given by
\begin{align}
R_{g_2}\colon{M\times{G}}&\to{M\times{G}} \notag \\
(x,g_1)&\mapsto(x,g_1\cdot{g_2}). \notag
\end{align}
which leaves $M$ invariant, where the standard product in $G$ is $g_1\cdot{g_2}=(B_1,\eta_1,h_1)\cdot(B_2,\eta_2,h_2)=(B_2B_1,\eta_1\eta_2,\sqrt{\eta_1}B_1^{-1}h_2+h_1)$.
Given another trivial principal bundle $\pi'\colon{M'\times{G}}\to{M'}$, we say that a diffeomorphism $F\colon{M\times{G}}\to{M'\times{G}}$ is an \emph{isomorphism} if $F$ preserves the structures of principal bundles of both $M\times{G}$ and $M'\times{G}$, i.e. there exists a diffeomorphism $\Phi\colon{M}\to{M'}$ such that
\begin{align}
\pi'\circ F&=\Phi\circ\pi, \notag \\
F\circ{R_g}&=R_g\circ{F}, \notag
\end{align}
for any $g\in{G}$.
Since an isomorphism of the previous form is completely determined by its value on $(x,e)$ (where $e$ is the unit element of $G$), there is a natural identification between a stochastic transformation $T=(\Phi,B,\eta,h)$ and the isomorphism $F_T$ such that $F_T(x,e)=(\Phi(x),g)$, where $g=(B,\eta,h)$. Given two stochastic transformations $T_1=(\Phi_1,B_1,\eta_1,h_1)$ and $T_2=(\Phi_2,B_2,\eta_2,h_2)$, we can consider their composition
\begin{equation}\label{eq:composition}
T_2\circ{T_1}=\Bigr(\Phi_2\circ\Phi_1,(B_2\circ\Phi_1)\cdot{B_1},(\eta_2\circ\Phi_1)\eta_1,\sqrt{\eta_1}B_1^{-1}\cdot(h_2\circ\Phi_1)+h_1\Bigl),
\end{equation}
Moreover, the inverse transformation of $T=(\Phi,B,\eta,h)$ is given by
\begin{equation}
\label{inversetransformation}
T^{-1}=\Bigr(\Phi^{-1},(B\circ\Phi^{-1})^{-1},(\eta\circ\Phi^{-1})^{-1},-\frac{1}{\sqrt{\eta}}B\cdot({h}\circ\Phi^{-1})\Bigl).
\end{equation}
The following theorem shows the probabilistic counterpart in terms of SDEs and process transformations of the previous geometric identification.
\begin{theorem}
\label{theo:tha}
Let $T_1$ and $T_2$ be two stochastic transformations, let $(\mu,\sigma)$ be a non explosive SDE such that $E_{T_1}(\mu,\sigma)$ and $E_{T_2}(E_{T_1}(\mu,\sigma))$ are non explosive and let $(X,W)$ be a solution to the SDE $(\mu,\sigma)$ on the probability space $(\Omega,\mathcal{F},\mathbb{P})$. Then on the probability space $(\Omega,\mathcal{F},\mathbb{Q})$, we have
\begin{align}
P_{T_2}(P_{T_1}(X,W))&=P_{T_2\circ{T_1}}(X,W), \notag \\
E_{T_2}(E_{T_1}(\mu,\sigma))&=E_{T_2\circ{T_1}}(\mu,\sigma). \notag
\end{align}
\end{theorem}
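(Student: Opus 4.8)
The plan is to establish the two stated identities separately, observing that the group law \eqref{eq:composition} on $G$ has already been fixed by the principal bundle isomorphism structure $F_{T_2}\circ F_{T_1}=F_{T_2\circ T_1}$; what remains is to verify that the two concrete realizations -- the SDE map $E_T$ of Definition \ref{defi:deb} and the process map $P_T$ of Definition \ref{defi:dea} -- intertwine with this composition. The non-explosiveness hypotheses on $(\mu,\sigma)$, on $E_{T_1}(\mu,\sigma)$ and on $E_{T_2}(E_{T_1}(\mu,\sigma))$ are exactly what is needed to apply Theorem \ref{theo:thea} at each stage and, via Lemma \ref{lemm:lea}, to guarantee that every measure change below is a genuine (global) martingale change. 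Note that the SDE identity alone does not yield the process identity, since weak solutions need not be unique; hence the process part must be argued by direct construction rather than by uniqueness.

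For the SDE identity $E_{T_2}(E_{T_1}(\mu,\sigma))=E_{T_2\circ T_1}(\mu,\sigma)$ I would argue by direct computation. Write $(\mu_1,\sigma_1)=E_{T_1}(\mu,\sigma)$ using Definition \ref{defi:deb}, let $L_1$ be the generator \eqref{eqGENERATORL} built from $(\mu_1,\sigma_1)$, and expand $E_{T_2}(\mu_1,\sigma_1)$. The diffusion parts compare immediately: the factors $\frac{1}{\sqrt{\eta_2}}\nabla\Phi_2\cdot\sigma_1\cdot B_2^{-1}$ and $\frac{1}{\sqrt{\eta_1}}\nabla\Phi_1\cdot\sigma\cdot B_1^{-1}$, after the pullbacks $\circ\Phi_1^{-1}$ and $\circ\Phi_2^{-1}$, collapse to $\frac{1}{\sqrt{\eta_1\eta_2}}\nabla(\Phi_2\circ\Phi_1)\cdot\sigma\cdot(B_2B_1)^{-1}$ by the chain rule $\nabla(\Phi_2\circ\Phi_1)=(\nabla\Phi_2\circ\Phi_1)\cdot\nabla\Phi_1$, which matches $E_{T_2\circ T_1}(\sigma)$ with $\eta_2$ replaced by $\eta_2\circ\Phi_1$ and $B_2$ by $B_2\circ\Phi_1$. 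The drift is the delicate part: one must check that the second-order operator $L_1$ applied to $\Phi_2$ reproduces the relevant part of $L(\Phi_2\circ\Phi_1)$ together with the correct first-order correction, and that the two Girsanov shifts combine into $\sqrt{\eta_1}B_1^{-1}\cdot(h_2\circ\Phi_1)+h_1$. The delicate bookkeeping here is keeping track of the evaluation points (every $T_2$-datum is composed with $\Phi_1$) and of how $h_1$ enters $\mu_1$, so that the shift $\nabla\Phi_2\cdot\sigma_1\cdot h_2$ unwinds to the scaled and rotated $h_2$-term of \eqref{eq:composition}.

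For the process identity $P_{T_2}(P_{T_1}(X,W))=P_{T_2\circ T_1}(X,W)$ I would unwind Definition \ref{defi:dea} one ingredient at a time, in the fixed order spatial map / time change / rotation / measure change. The spatial part is immediate from $\Phi_2\circ\Phi_1$. For the time change, the inverse time changes $\alpha$ of \eqref{alpha} associated with rate $\eta_1$ and with rate $\eta_2$ (the latter read along the already transformed state $\Phi_1(X)$) compose to the inverse time change of the product rate $(\eta_2\circ\Phi_1)\,\eta_1$, which is the $\eta$-entry of $T_2\circ T_1$. The rotation and drift are then read off the stochastic differential of the twice-transformed Brownian motion: substituting $d\tilde W^{(1)}_t=\sqrt{\eta_1(X_t)}B_1(X_t)(dW_t-h_1(X_t)dt)$ into the defining relation for the second transformation, and using that $B_2,\eta_2,h_2$ are evaluated at $\Phi_1(X)$, produces exactly the rotation $(B_2\circ\Phi_1)B_1$ and the drift $\sqrt{\eta_1}B_1^{-1}(h_2\circ\Phi_1)+h_1$ of \eqref{eq:composition}.

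The step I expect to be the genuine obstacle is the compatibility of the two successive measure changes. The first Girsanov step produces $\mathbb{Q}_1$ with density $Z_{\mathcal{T}}(h_1(X))$ relative to $\mathbb{P}$, and the second is performed under $\mathbb{Q}_1$ with integrand $h_2$ read along the transformed, time-changed process and driven by $\tilde W^{(1)}$. To identify the final measure $\mathbb{Q}$ with the one attached to $T_2\circ T_1$ I would express the second density back in terms of the original driving motion $W$ and the original measure $\mathbb{P}$: rewriting $\int h_2\,d\tilde W^{(1)}$ through $d\tilde W^{(1)}=\sqrt{\eta_1}B_1(dW-h_1\,dt)$ and undoing the $\eta_1$-time change, the second exponential combines multiplicatively with $Z_{\mathcal{T}}(h_1(X))$, and a direct computation shows the product equals $\exp(\int_0^{\mathcal{T}}\bar h_j\,dW^j-\frac12\int_0^{\mathcal{T}}|\bar h|^2\,ds)$ with $\bar h=\sqrt{\eta_1}B_1^{-1}(h_2\circ\Phi_1)+h_1$, i.e.\ the density \eqref{eq:definitionQ} of $T_2\circ T_1$. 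Here the non-explosiveness assumptions are essential, since through Lemma \ref{lemm:lea} they make each exponential a true martingale, so that the iterated change of measure is legitimate and the chain rule for Radon--Nikodym derivatives applies.
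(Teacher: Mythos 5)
The paper never proves Theorem \ref{theo:tha}: it is stated without proof, being recalled from the earlier work \cite{DMU3}, so there is no in-paper argument to compare yours against; your proposal must therefore be judged on its own, and it is correct as an outline of the standard direct verification. I checked the two steps you flag as delicate, and they do close. For the drift: writing $(\mu_1,\sigma_1)=E_{T_1}(\mu,\sigma)$ with generator $L_1$, the second-order chain rule gives $\eta_1\,\bigl(L_1(\Phi_2)\circ\Phi_1\bigr)=L(\Phi_2\circ\Phi_1)+\nabla(\Phi_2\circ\Phi_1)\cdot\sigma\cdot h_1$ (the orthogonality $B_1B_1^T=I$ makes $B_1$ drop out of $\sigma_1\sigma_1^T$), while the term $\nabla\Phi_2\cdot\sigma_1\cdot h_2$ read along $\Phi_1$ contributes $\tfrac{1}{\sqrt{\eta_1}}\nabla(\Phi_2\circ\Phi_1)\cdot\sigma\cdot B_1^{-1}\cdot(h_2\circ\Phi_1)$; dividing by $(\eta_2\circ\Phi_1)\eta_1$ reproduces $E_{T_2\circ T_1}(\mu)$ exactly, matching \eqref{eq:composition}. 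For the measures: since $B_1^{-1}=B_1^T$, the cross term satisfies $(h_2\circ\Phi_1)\cdot B_1h_1=[B_1^{-1}(h_2\circ\Phi_1)]\cdot h_1$ and the quadratic terms add correctly, so the product of the two exponentials is the single exponential with $\bar h=\sqrt{\eta_1}B_1^{-1}\cdot(h_2\circ\Phi_1)+h_1$, as you claim. The one point deserving more care than you give it is the horizon bookkeeping: when $\eta_1\not\equiv 1$, the second transformation acts on the time-changed clock, so its Girsanov density and its driving integrals run up to $\int_0^{\mathcal{T}}\eta_1(X_s)\,ds$, which is random in the original clock; rewriting them over $[0,\mathcal{T}]$ requires the change-of-variables formula for It\^o integrals under the inverse time change \eqref{alpha}, and this is exactly where your phrase ``undoing the $\eta_1$-time change'' hides the work (a subtlety the paper's own framework also treats implicitly). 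Modulo making that step explicit, your argument is complete and is, in all likelihood, the same computation carried out in \cite{DMU3}.
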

Since the set of stochastic transformations is a group with respect to the composition $\circ$, we can consider the one parameter group $T_a=(\Phi_a,B_a,\eta_a,h_a)$ and the corresponding \emph{infinitesimal (general) transformation} $V=(Y,C,\tau,H)$ obtained in the usual way
\begin{align}
Y(x)=&\partial_a(\Phi_a(x))\vert_{a=0} \notag \\
C(x)=&\partial_a(B_a(x))\vert_{a=0} \notag \\
\tau(x)=&\partial_a(\eta_a(x))\vert_{a=0} \notag \\
H(x)=&\partial_a(h_a(x))\vert_{a=0}, \notag
\end{align}
where $Y$ is a vector field on $M$, $C\colon{M}\to\mathfrak{so}(m)$, $\tau\colon{M}\to\mathbb{R}$ and $H\colon{M}\to\mathbb{R}^m$ are smooth functions. If $V$ is of the form $V=(Y,0,0,0)$ we call $V$ a \emph{strong infinitesimal stochastic  transformation}.
Conversely, given $V=(Y,C,\tau,H)$ we can reconstruct the one parameter transformation group $T_a$ choosing $\Phi_a$, $B_a$ and $\eta_a$ as the one parameter solutions to the following system
\begin{align}
\partial_a(\Phi_a(x))=&Y(\Phi_a(x)) \notag \\
\partial_a(B_a(x))=&C(\Phi_a(x) \cdot B_a(x) \notag \\
\partial_a(\eta_a(x))=&\tau(\Phi_a(x))\eta_a(x) \notag
\end{align}
with initial condition $\Phi_0=id_M, B_0=I, \eta_0=1$.
Moreover,  using Theorem \ref{theo:tha} and the properties of the flow we obtain that $h_a$ satisfies
\begin{align}
h_{b+a}(x)=&\frac{1}{\sqrt{\eta_a(x)}}B_a^{-1}(x)\cdot{h_b}(\Phi_a(x))+h_a(x) \notag \\
\partial_b(h_{b+a}(x))=&\frac{1}{\sqrt{\eta_a(x)}}B_a^{-1}(x)\cdot\partial_b(h_b(\Phi_a(x))) \notag \\
\partial_b(h_{b+a}(x))\vert_{b=0}=&\partial_a(h_a(x))=\frac{1}{\sqrt{\eta_a(x)}}B_a^{-1}(x)\cdot{H(\Phi_a(x))}, \notag
\end{align}
with initial condition $h_0(x)=0$.
Finally, given a finite stochastic transformation $T=(\Phi,B,\eta,h)$ and an infinitesimal one, $V=(Y,C,\tau,H)$, with associated one parameter group given by $T_a$, we can write the \emph{pushforward} of $V$ through the transformation $T$ as
\begin{multline}
T_*(V)=\Bigr(\Phi_*(Y),(B\cdot{C}\cdot{B^{-1}}+Y(B)\cdot{B^{-1}})\circ\Phi^{-1},\Bigr(\tau+\frac{Y(\eta)}{\eta}\Bigl)\circ\Phi^{-1}, \\
\Bigr(-\frac{1}{\sqrt{\eta}}B\cdot\Bigr(-\frac{\tau}{2}+C\Bigl)\cdot{h}+\frac{1}{\sqrt{\eta}}B\cdot{H}+\frac{1}{\sqrt{\eta}}B\cdot{Y(h)}\Bigl)\circ\Phi^{-1}\Bigl),
\end{multline}
where the pushforward $\Phi_*(Y)$ of vector fields is defined as 
\begin{equation}\label{eq:pushforward}
\Phi_*(Y)=(\nabla \Phi \cdot Y)\circ \Phi^{-1}.
\end{equation}
Moreover, given two infinitesimal stochastic transformations $V_1=(Y_1,C_1,\tau_1,H_1)$ and $V_2=(Y_2,C_2,\tau_2,H_2)$, we can consider their Lie brackets given by
\begin{multline}
[V_1,V_2]=\Bigr([Y_1,Y_2],Y_1(C_2)-Y_2(C_1)-\{C_1,C_2\},Y_1(\tau_2)-Y_2(\tau_1), \\
Y_1(H_2)-Y_2(H_1)-\Bigr(-\frac{\tau_1}{2}+C_1\Bigl)\cdot{H_2}+\Bigr(-\frac{\tau_2}{2}+C_2\Bigl)\cdot{H_1}\Bigl),
\end{multline}
where $\{C_1,C_2\}$ denotes the standard Lie brackets of matrices.

\section{General stochastic symmetries}\label{section_2}

In this section we exploit the general stochastic transformations introduced in Section \ref{section_1} in order to enlarge the class of symmetries for SDEs.

\begin{definition}[Finite and infinitesimal (general) symmetry] A (general) stochastic transformation $T$ is a \emph{(finite weak general) symmetry} of a non explosive SDE $(\mu,\sigma)$ if, for every solution process $(X,W)$, $P_T(X,W)$ is a solution process to the same SDE.
An infinitesimal (general) stochastic transformation $V$ generating a one parameter group $T_a$ is called an \emph{infinitesimal (general) symmetry} of the non explosive SDE $(\mu,\sigma)$ if $T_a$ is a symmetry of $(\mu,\sigma)$.
\end{definition}
\begin{proposition}
\label{prop:pra}
A stochastic transformation $T=(\Phi,B,\eta,h)$ is a symmetry of the non explosive SDE $(\mu,\sigma)$ if and only if
\begin{align}
\Bigr(\frac{1}{\eta}[L(\Phi)+\nabla\Phi\cdot\sigma\cdot{h}]\Bigl)\circ\Phi^{-1}&=\mu \notag \\
\Bigr(\frac{1}{\sqrt{\eta}}\nabla\Phi\cdot\sigma\cdot{B^{-1}}\Bigl)\circ\Phi^{-1}&=\sigma \notag
\end{align}
\end{proposition}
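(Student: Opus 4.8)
The plan is to recognise that the two displayed identities are nothing but the equations $E_T(\mu)=\mu$ and $E_T(\sigma)=\sigma$, with $E_T$ as in Definition~\ref{defi:deb}. Thus the proposition is the equivalence
\[
T \text{ is a symmetry of } (\mu,\sigma) \iff E_T(\mu,\sigma)=(\mu,\sigma),
\]
and I would prove the two implications separately: the reverse one is a one-line consequence of Theorem~\ref{theo:thea}, while the forward one rests on the uniqueness of the canonical decomposition of a continuous semimartingale.

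For the implication ($\Leftarrow$), assume $E_T(\mu)=\mu$ and $E_T(\sigma)=\sigma$. Then $E_T(\mu,\sigma)=(\mu,\sigma)$, which is non explosive by hypothesis. By Theorem~\ref{theo:thea}, for every solution $(X,W)$ of $(\mu,\sigma)$ the transformed process $P_T(X,W)$ solves $E_T(\mu,\sigma)=(\mu,\sigma)$; hence $T$ is a symmetry.

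For the implication ($\Rightarrow$), assume $T$ is a symmetry and fix an arbitrary $x_0\in M'$. I would pick a solution $(X,W)$ of $(\mu,\sigma)$ with $X_0=\Phi^{-1}(x_0)$, so that the transformed position $X'_0:=P_T(X)_0=\Phi(X_0)=x_0$ (the time change starts at $0$). Writing $W'=P_T(W)$, the constructive chain of transformations (Propositions~\ref{Prop1} and~\ref{Prop2}, the random rotation proposition and Theorem~\ref{theo:theg}), run only up to the first exit time $\tau_r$ of $X'$ from a small ball around $x_0$ so that no explosion intervenes, yields on $[0,\tau_r]$ the decomposition
\[
X'_t-x_0=\int_0^t E_T(\mu)(X'_s)\,ds+\int_0^t E_T(\sigma)(X'_s)\,dW'_s.
\]
On the other hand, since $T$ is a symmetry, the pair $(X',W')$ solves $(\mu,\sigma)$, so $X'$ admits on $[0,\tau_r]$ the analogous decomposition with $\mu,\sigma$ in place of $E_T(\mu),E_T(\sigma)$ and with the \emph{same} driving Brownian motion $W'$. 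Uniqueness of the decomposition of the continuous semimartingale $X'$ into a finite-variation part and a continuous local martingale then forces the two finite-variation parts and the two martingale parts to coincide: taking the cross-variation with each component of $W'$ identifies $E_T(\sigma)(X'_s)=\sigma(X'_s)$ for a.e.\ $s\le\tau_r$, and equating the finite-variation parts gives $E_T(\mu)(X'_s)=\mu(X'_s)$ for a.e.\ $s\le\tau_r$. Letting $s\downarrow0$ and using continuity of the coefficients and of the path together with $X'_0=x_0$ yields $E_T(\mu)(x_0)=\mu(x_0)$ and $E_T(\sigma)(x_0)=\sigma(x_0)$; as $x_0$ was arbitrary, the two identities hold on all of $M'$.

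The main obstacle will be precisely this last passage, from the almost-everywhere-in-time equalities along a single trajectory to the pointwise identity of the coefficients as functions on $M'$: it requires realizing solutions through every point of $M'$ (secured by the free choice $X_0=\Phi^{-1}(x_0)$) and a clean limit $s\downarrow0$ exploiting right-continuity of $s\mapsto E_T(\sigma)(X'_s)$ and $s\mapsto E_T(\mu)(X'_s)$. A secondary technical point is that Theorem~\ref{theo:thea} presupposes non-explosiveness of $E_T(\mu,\sigma)$, which is not available \emph{a priori} in the forward direction; localizing at $\tau_r$ circumvents this, since only the local behaviour near $x_0$ is needed to read off the coefficients.
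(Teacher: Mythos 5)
Your proof is correct. Note that the paper itself states Proposition \ref{prop:pra} without proof (it is recalled from \cite{DMU3}), so there is no in-paper argument to compare against; but your two-step scheme is the natural one and matches the paper's own toolbox. The sufficiency direction is, as you say, immediate from Theorem \ref{theo:thea} once one observes that $E_T(\mu,\sigma)=(\mu,\sigma)$ is non explosive by hypothesis, and your necessity direction, via uniqueness of the canonical decomposition of a continuous semimartingale plus identification of the integrands through quadratic covariation and continuity at $t=0$, is exactly the device the paper uses in the proof of Proposition \ref{PropDoobTransf}. Two minor remarks. First, the localization at $\tau_r$ is a safe way around the fact that Theorem \ref{theo:thea} formally presupposes non explosiveness of $E_T(\mu,\sigma)$, but it is not strictly needed: once $T$ is a symmetry, $P_T(X,W)$ is by definition a well-defined solution of the non explosive SDE $(\mu,\sigma)$, so the constructive chain (It\^o formula, Girsanov, rotation, time change) yields the $E_T$-decomposition of $P_T(X)$ globally, and the uniqueness argument can be run on all of $[0,\mathcal{T}]$. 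Second, your forward direction silently uses that a weak solution exists from every initial point $\Phi^{-1}(x_0)\in M$; this should be said explicitly, though it is unproblematic here since the coefficients are smooth (hence locally Lipschitz), giving local existence, and non explosiveness of $(\mu,\sigma)$ gives global existence.
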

Next theorem provides the \emph{general determining equations} satisfied by the infinitesimal symmetries of an SDE $(\mu, \sigma)$.
\begin{theorem}\label{TheoDetEq}
An infinitesimal stochastic transformation $V=(Y,C,\tau,H)$ is an infinitesimal symmetry of the non explosive SDE $(\mu,\sigma)$ if and only if $V$ generates a one parameter group defined on $M$ and the following equations hold
\begin{align}
\label{equa:eqa}
Y(\mu)-L(Y)-\sigma\cdot{H}+\tau\mu&=0 \\
\label{equa:eqb}
[Y,\sigma]+\frac{1}{2}\tau\sigma+\sigma\cdot{C}&=0.
\end{align}
\end{theorem}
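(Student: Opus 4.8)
The plan is to obtain the two determining equations as the $a$-derivative at $a=0$ of the finite symmetry conditions of Proposition~\ref{prop:pra}, specialised to the one parameter group $T_a=(\Phi_a,B_a,\eta_a,h_a)$ generated by $V$. Since $V$ is an infinitesimal symmetry precisely when every $T_a$ is a (finite) symmetry, for each $a$ the two identities of Proposition~\ref{prop:pra} hold; composing both of them on the right with $\Phi_a$ to clear the factor $\Phi_a^{-1}$, I would work with the equivalent relations
\[
L(\Phi_a)+\nabla\Phi_a\cdot\sigma\cdot h_a=\eta_a\,(\mu\circ\Phi_a),\qquad
\nabla\Phi_a\cdot\sigma\cdot B_a^{-1}=\sqrt{\eta_a}\,(\sigma\circ\Phi_a),
\]
which are smooth in $a$ and hold identically on $M$.

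First I would differentiate these in $a$ and evaluate at $a=0$, using the initial data $\Phi_0=\mathrm{id}$, $B_0=I$, $\eta_0=1$, $h_0=0$ and the definitions $Y=\partial_a\Phi_a|_{a=0}$, $C=\partial_a B_a|_{a=0}$, $\tau=\partial_a\eta_a|_{a=0}$, $H=\partial_a h_a|_{a=0}$. The ingredients are the interchange $\partial_a L(\Phi_a)|_{a=0}=L(Y)$, legitimate because $L$ is a fixed linear differential operator; the chain rule $\partial_a(\mu\circ\Phi_a)|_{a=0}=Y(\mu)$ and $\partial_a(\sigma\circ\Phi_a)|_{a=0}=Y(\sigma)$; the scalar derivative $\partial_a\sqrt{\eta_a}|_{a=0}=\tfrac12\tau$; and, from differentiating $B_aB_a^{-1}=I$, the relation $\partial_a B_a^{-1}|_{a=0}=-C$. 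In the first identity the term $\nabla\Phi_a\cdot\sigma\cdot h_a$ contributes only $\sigma\cdot H$ at $a=0$, because $h_0=0$ kills the factor multiplying $\nabla Y$, while the right-hand side gives $\tau\mu+Y(\mu)$; rearranging yields \eqref{equa:eqa}. In the second identity the left-hand side gives $\nabla Y\cdot\sigma-\sigma\cdot C$ and the right-hand side $\tfrac12\tau\sigma+Y(\sigma)$; substituting the bracket identity $\nabla Y\cdot\sigma=Y(\sigma)-[Y,\sigma]$ and cancelling $Y(\sigma)$ produces \eqref{equa:eqb}.

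For the converse I would argue that the determining equations, which assert the vanishing at $a=0$ of the $a$-derivative of $E_{T_a}(\mu,\sigma)$, already force $E_{T_a}(\mu,\sigma)=(\mu,\sigma)$ for all $a$. The key is the group law $T_{a+b}=T_b\circ T_a$ combined with Theorem~\ref{theo:tha}, which gives $E_{T_{a+b}}(\mu,\sigma)=E_{T_b}(E_{T_a}(\mu,\sigma))$; differentiating in $b$ at $b=0$ shows that $a\mapsto E_{T_a}(\mu,\sigma)$ solves the autonomous equation $\partial_a E_{T_a}(\mu,\sigma)=\mathcal{D}(E_{T_a}(\mu,\sigma))$, where $\mathcal{D}$ is the linearisation computed in the forward step, sending an SDE $(\mu',\sigma')$ to the pair of left-hand sides of \eqref{equa:eqa}--\eqref{equa:eqb} written for $(\mu',\sigma')$. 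The determining equations say exactly that $(\mu,\sigma)$ is an equilibrium of $\mathcal{D}$, and $E_{T_0}(\mu,\sigma)=(\mu,\sigma)$, so by uniqueness the orbit is constant and each $T_a$ is a symmetry; here the hypothesis that $V$ generates a one parameter group defined on all of $M$ is what guarantees that $\Phi_a$, and hence $T_a$, exists for every $a$. I expect the main obstacle to be this propagation step: one must justify that $\mathcal{D}$ is genuinely the same operator at every $a$, i.e. that the infinitesimal action is independent of the base SDE, and then invoke a uniqueness argument in the space of coefficient functions, which is cleanest to run pointwise in $x\in M$ after reducing the interchange-of-derivatives issues to the joint smoothness of $\Phi_a,B_a,\eta_a,h_a$ in $(a,x)$.
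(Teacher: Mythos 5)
The paper itself states Theorem \ref{TheoDetEq} without proof (it is imported from \cite{DMU3}), so your attempt has to be judged on its own terms. Your forward direction is correct and complete in outline: composing the two identities of Proposition \ref{prop:pra} with $\Phi_a$, differentiating at $a=0$ with $\Phi_0=\mathrm{id}$, $B_0=I$, $\eta_0=1$, $h_0=0$, and using $\partial_a B_a^{-1}\vert_{a=0}=-C$ together with $[Y,\sigma]=Y(\sigma)-\nabla Y\cdot\sigma$ does yield \eqref{equa:eqa} and \eqref{equa:eqb}; all interchanges of derivatives are covered by joint smoothness in $(a,x)$.

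The gap is in the converse, and it is exactly at the step you flagged, but the resolution you propose does not work. Your operator $\mathcal{D}$ is \emph{not} a pointwise map of coefficient values: computing $\partial_b E_{T_b}(\mu',\sigma')\vert_{b=0}$ gives $-(Y(\mu')-L'(Y)-\sigma'\cdot H+\tau\mu')$ and $-([Y,\sigma']+\tfrac12\tau\sigma'+\sigma'\cdot C)$, and the terms $Y(\mu')$ and $Y(\sigma')$ (the latter hidden inside $[Y,\sigma']$) contain first spatial derivatives of the unknown coefficients; they arise from the composition with $\Phi_b^{-1}$. Hence $\partial_a E_{T_a}(\mu,\sigma)=\mathcal{D}(E_{T_a}(\mu,\sigma))$ is a quasilinear transport PDE in $(a,x)$, not a family of ODEs indexed by $x$, and a uniqueness argument "run pointwise in $x$" is unavailable: $\mathcal{D}(u)$ at $x$ depends on $u$ in a neighbourhood of $x$. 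There are two ways to close this. (i) Method of characteristics: the first-order part of $\mathcal{D}$ is precisely $-Y$, so the curves $a\mapsto\Phi_a(x_0)$ are characteristics; along them both $a\mapsto E_{T_a}(\mu,\sigma)(\Phi_a(x_0))$ and the frozen curve $a\mapsto(\mu,\sigma)(\Phi_a(x_0))$ solve the same finite-dimensional ODE with locally Lipschitz right-hand side — the determining equations are exactly the statement that the frozen curve is a solution — and with the same initial value, so Picard--Lindel\"of gives equality, and since $\Phi_a$ is a diffeomorphism this forces $E_{T_a}(\mu,\sigma)=(\mu,\sigma)$. (ii) Cleaner, avoiding uniqueness altogether: use the group law in the other order, $T_{a+b}=T_a\circ T_b$, so that Theorem \ref{theo:tha} gives $E_{T_{a+b}}(\mu,\sigma)=E_{T_a}\bigl(E_{T_b}(\mu,\sigma)\bigr)$. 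Since $E_{T_a}$ acts on an SDE by a pointwise algebraic map of the coefficient values followed by relabeling with $\Phi_a$ (no derivatives of $\mu',\sigma'$ enter Definition \ref{defi:deb}), the chain rule yields $\partial_a E_{T_a}(\mu,\sigma)=DE_{T_a}\vert_{(\mu,\sigma)}\bigl[\partial_b E_{T_b}(\mu,\sigma)\vert_{b=0}\bigr]$, and the determining equations make the bracketed term vanish, so the orbit is constant in $a$. Either way, Proposition \ref{prop:pra} then converts $E_{T_a}(\mu,\sigma)=(\mu,\sigma)$ back into the symmetry property of each $T_a$; your worry that $\mathcal{D}$ might depend on the base SDE is, by contrast, harmless, since its formula depends only on $V$.
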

In order to prove that the set of (general) infinitesimal symmetries of a non explosive SDE $(\mu, \sigma)$ is a Lie algebra, we  need the following technical Lemma.
\begin{lemma}
\label{lemma:lea}
Given an infinitesimal stochastic symmetry $V=(Y,C,\tau,H)$ of the non explosive SDE $(\mu,\sigma)$, for any smooth function $f\in{C^\infty(M)}$, we have
\begin{equation}\label{eq_lemmared1}
Y(L(f))-L(Y(f))=-\tau{L(f)}+\nabla(f)\cdot\sigma\cdot{H}\\
\end{equation}
\begin{equation}\label{eq_lemmared2}
Y(\sigma^T)\cdot \nabla (f)=\sigma^T \cdot (\nabla Y)^T \cdot \nabla(f)-\frac 12 \tau \sigma^T\cdot  \nabla(f)+C\cdot \sigma^T \cdot \nabla (f),
\end{equation}
where $L=A^{ij}\partial_{ij}+\mu^i\partial_i$ (with  $A=\frac{1}{2}\sigma\cdot\sigma^T$) is the infinitesimal generator defined in \eqref{eqGENERATORL}, $Y=Y^i\partial_i$ and  $(\nabla Y)^i_k=\partial_kY^i$.
\end{lemma}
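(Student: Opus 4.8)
The plan is to derive both identities directly from the two general determining equations \eqref{equa:eqa} and \eqref{equa:eqb} of Theorem \ref{TheoDetEq}, working throughout in coordinates with the Einstein convention. Identity \eqref{eq_lemmared2} turns out to be essentially a rewriting of \eqref{equa:eqb} after contraction with $\nabla(f)$, whereas \eqref{eq_lemmared1} requires expanding the commutator $Y(L(f))-L(Y(f))$ and then invoking both determining equations.

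For \eqref{eq_lemmared2} I would start from \eqref{equa:eqb} written componentwise, using $[Y,\sigma]^i_\alpha=Y(\sigma^i_\alpha)-(\nabla Y\cdot\sigma)^i_\alpha$, which gives $Y(\sigma^i_\alpha)=(\nabla Y\cdot\sigma)^i_\alpha-\tfrac12\tau\sigma^i_\alpha-(\sigma\cdot C)^i_\alpha$. Contracting both sides with $\partial_i f$ and transposing, the first term becomes $\sigma^T\cdot(\nabla Y)^T\cdot\nabla(f)$ and the second becomes $-\tfrac12\tau\,\sigma^T\cdot\nabla(f)$. The only point needing care is the last term: $-(\sigma\cdot C)^i_\alpha\partial_i f=-\sigma^i_\beta C^\beta_\alpha\partial_i f$, and using the antisymmetry $C^\beta_\alpha=-C^\alpha_\beta$ of $C\in\mathfrak{so}(m)$ this equals $+C^\alpha_\beta(\sigma^T\nabla f)_\beta=(C\cdot\sigma^T\cdot\nabla(f))_\alpha$, which is precisely the sign and placement appearing in \eqref{eq_lemmared2}.

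For \eqref{eq_lemmared1} I would expand $Y(L(f))$ and $L(Y(f))$ with $L=A^{ij}\partial_{ij}+\mu^i\partial_i$. The third-order terms $Y^kA^{ij}\partial_{kij}f$ match between the two expressions and cancel, and the mixed terms $Y^k\mu^i\partial_{ki}f$ likewise cancel. The surviving second-order part is $\big(Y^k\partial_k A^{ij}-A^{ik}\partial_k Y^j-A^{jk}\partial_k Y^i\big)\partial_{ij}f$, i.e. the Lie derivative $(\mathcal L_Y A)^{ij}\partial_{ij}f$. Writing $A=\tfrac12\sigma\sigma^T$ and grouping, one finds $(\mathcal L_Y A)^{ij}=\tfrac12\big((\mathcal L_Y\sigma)^i_\alpha\sigma^j_\alpha+\sigma^i_\alpha(\mathcal L_Y\sigma)^j_\alpha\big)$; substituting $\mathcal L_Y\sigma=-\tfrac12\tau\sigma-\sigma\cdot C$ from \eqref{equa:eqb} and again using the antisymmetry of $C$ to kill the two $C$-terms leaves exactly $(\mathcal L_Y A)^{ij}=-\tau A^{ij}$, so the second-order part equals $-\tau A^{ij}\partial_{ij}f$. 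The remaining first-order part is $\big(Y(\mu)-L(Y)\big)^i\partial_i f$, which by \eqref{equa:eqa} equals $(\sigma\cdot H-\tau\mu)^i\partial_i f$. Adding the two contributions gives $-\tau(A^{ij}\partial_{ij}f+\mu^i\partial_i f)+\nabla(f)\cdot\sigma\cdot H=-\tau L(f)+\nabla(f)\cdot\sigma\cdot H$, as claimed.

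The main obstacle I anticipate is the bookkeeping in the second-order part of \eqref{eq_lemmared1}: one must carefully track the symmetric Hessian contractions, recognise the combination as $\mathcal L_Y A$, and verify that the two $C$-contributions in $\mathcal L_Y(\tfrac12\sigma\sigma^T)$ cancel by antisymmetry of $C$. Everything else is routine index manipulation once the determining equations are brought into the form $\mathcal L_Y\sigma=-\tfrac12\tau\sigma-\sigma\cdot C$ and $Y(\mu)-L(Y)=\sigma\cdot H-\tau\mu$.
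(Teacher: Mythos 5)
Your proposal is correct and follows essentially the same route as the paper's proof: both expand $Y(L(f))-L(Y(f))$ in coordinates, handle the first-order part via the determining equation \eqref{equa:eqa}, and handle the second-order part by symmetrizing the $\sigma^T$-contraction of \eqref{equa:eqb} so that the $C$-terms cancel by antisymmetry (your computation of $\mathcal{L}_Y A=-\tau A$ is exactly the paper's step of adding $[Y,\sigma]\cdot\sigma^T+\frac{1}{2}\tau\sigma\sigma^T+\sigma C\sigma^T=0$ to its transpose). Likewise, your derivation of \eqref{eq_lemmared2} by contracting \eqref{equa:eqb} with $\nabla(f)$ and using $C^T=-C$ is the paper's argument in componentwise form.
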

\begin{proof}
 In order to prove \eqref{eq_lemmared1}, let us consider
\begin{align}
Y(L(f))-L(Y(f))=&Y^i\partial_i(A^{jk}\partial_{jk}(f)+\mu^j\partial_j(f)) \notag \\
&-(A^{jk}\partial_{jk}(Y^i\partial_i(f))+\mu^j\partial_j(Y^i\partial_i(f)))= \notag \\
=&(Y^i\partial_i(A^{jk})-A^{ik}\partial_i(Y^j)-A^{ji}\partial_i(Y^k))\partial_{jk}(f) \notag \\
&+(Y^i\partial_i(\mu^j)-A^{ik}\partial_{ik}(Y^j)-\mu^i\partial_i(Y^j))\partial_j(f). \notag
\end{align}
Thus, we can rewrite the claim as
\begin{align}
\label{equat:eqx}
Y^i\partial_i(\mu^j)-A^{ik}\partial_{ik}(Y^j)-\mu^i\partial_i(Y^j)&=-\tau\mu^j+\sigma^{j}_kH^k, \\
\label{equat:eqw}
Y^i\partial_i(A^{jk})-A^{ik}\partial_i(Y^j)-A^{ji}\partial_i(Y^k)&=-\tau(A^{jk}).
\end{align}
Equation \eqref{equat:eqx} holds owing to the first determining equation \eqref{equa:eqa}. Moreover, if we consider
 the (right) product of the second determining equation \eqref{equa:eqb} with $\sigma^T$
\begin{equation*}
[Y, \sigma]\cdot\sigma^T+\frac 12 \tau\sigma \cdot \sigma^T+\sigma\cdot C \cdot \sigma^T=0
\end{equation*}
and we add this equation with its transpose, since $C$ is an antisymmetric matrix, we get \eqref{equat:eqw}.\\
In order to prove \eqref{eq_lemmared2}, we start by considering the transpose of the second determining equation \eqref{equa:eqb}. If we multiply on the right by $\nabla(f)$ we get
\begin{equation}\label{eq_lemmared3}
[Y, \sigma]^T\cdot \nabla (f)+\frac 12 \tau \sigma^T \cdot \nabla (f)-C \cdot \sigma^T\cdot \nabla(f)=0.
\end{equation}
Since, by definition, we have
\begin{equation*}
[Y, \sigma]^T=[Y^i\partial_i(\sigma^k_{\alpha})]^T-[\partial_i Y^k \sigma^i_{\alpha}]^T=Y(\sigma^T)-\sigma^T(\nabla Y)^T,
\end{equation*}
we can rewrite equation \eqref{eq_lemmared3} as
\begin{equation*}
Y(\sigma^T)\cdot \nabla (f)-\sigma^T \cdot (\nabla Y)^T \cdot \nabla (f) +\frac 12 \tau \sigma^T \cdot \nabla (f) -C \cdot \sigma^T \cdot \nabla (f)=0
\end{equation*}
 and we get the thesis.
\end{proof}
\begin{theorem}\label{TheoCommunatorSymm}
Given two (general) infinitesimal symmetries  $V_1$ and $V_2$ of a non explosive SDE $(\mu, \sigma)$, the commutator $[V_1,V_2]$ is an infinitesimal symmetry of $(\mu, \sigma)$.
\end{theorem}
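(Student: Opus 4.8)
The plan is to write the commutator as $V_3=[V_1,V_2]=(Y_3,C_3,\tau_3,H_3)$, with $Y_3=[Y_1,Y_2]$, $C_3=Y_1(C_2)-Y_2(C_1)-\{C_1,C_2\}$, $\tau_3=Y_1(\tau_2)-Y_2(\tau_1)$ and $H_3$ the twisted combination appearing in the bracket formula, and then to verify directly that $V_3$ satisfies the two determining equations \eqref{equa:eqa}--\eqref{equa:eqb} of Theorem \ref{TheoDetEq}. Since the set of all infinitesimal transformations is already a Lie algebra and the determining equations are linear in $V$, establishing closure under the bracket is exactly what exhibits the symmetries as a Lie subalgebra. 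I would treat the two equations separately, handling the purely first order diffusion equation \eqref{equa:eqb} first, and the drift equation \eqref{equa:eqa} second, since the latter involves the second order operator $L$ and is where Lemma \ref{lemma:lea} carries the weight.

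For \eqref{equa:eqb} I would read $\sigma$ column by column, so that each column $\sigma_\alpha$ is a vector field and $[Y,\sigma]$ is the matrix whose columns are the Lie brackets $[Y,\sigma_\alpha]$; the determining equation for $V_i$ then reads $[Y_i,\sigma]=-\tfrac12\tau_i\sigma-\sigma\cdot C_i$. I would compute $[Y_3,\sigma]=[[Y_1,Y_2],\sigma]$ column-wise via the Jacobi identity, $[[Y_1,Y_2],\sigma_\alpha]=[Y_1,[Y_2,\sigma_\alpha]]-[Y_2,[Y_1,\sigma_\alpha]]$, substitute the first order expressions for $[Y_i,\sigma_\alpha]$, and expand using the Leibniz rule $[Y_1,f\,\sigma_\beta]=Y_1(f)\sigma_\beta+f[Y_1,\sigma_\beta]$ on the function coefficients $\tau_i$ and the entries of $C_i$. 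Collecting terms, the single derivatives reproduce $\tau_3$ and part of $C_3$, while the double substitution of the $C_i$ contributions produces precisely the matrix commutator $\{C_1,C_2\}$, yielding $[Y_3,\sigma]=-\tfrac12\tau_3\sigma-\sigma\cdot C_3$, which is \eqref{equa:eqb} for $V_3$.

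For the drift equation I would start from $Y_3(\mu)-L(Y_3)+\tau_3\mu$ and aim to show it equals $\sigma\cdot H_3$. Writing $Y_3(\mu)=Y_1(Y_2(\mu))-Y_2(Y_1(\mu))$ componentwise and eliminating $Y_i(\mu)$ by \eqref{equa:eqa} for $V_i$, one meets crossed second order pieces $Y_i(L(Y_j))$; here equation \eqref{eq_lemmared1} of Lemma \ref{lemma:lea} is exactly the tool that converts the commutators $Y_i(L(\cdot))-L(Y_i(\cdot))$ into the first order data $-\tau_i L(\cdot)+\nabla(\cdot)\cdot\sigma\cdot H_i$. After using \eqref{equa:eqa} to remove the residual $L(Y_i)$ terms, what remains is a collection of $\sigma\cdot H$ terms twisted by $\tau_i$ and $C_i$; matching these against $H_3=Y_1(H_2)-Y_2(H_1)-(-\tfrac{\tau_1}{2}+C_1)\cdot H_2+(-\tfrac{\tau_2}{2}+C_2)\cdot H_1$ is where \eqref{eq_lemmared2} enters, supplying the relation between $Y(\sigma^T)\cdot\nabla(f)$ and $\sigma^T\cdot(\nabla Y)^T\cdot\nabla(f)$ together with the $-\tfrac12\tau$ and $C$ corrections needed to absorb the twisting.

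The main obstacle I anticipate is the bookkeeping in this last step: tracking the index structure and the order in which $Y_i$, $L$, $\sigma$ and the matrices $C_i$ act, so that the twisted $H_3$ terms assemble correctly. Lemma \ref{lemma:lea} is designed precisely to reduce every second order object (the action of $L$ on a bracket of vector fields) to first order symmetry data, so the computation, though lengthy, becomes mechanical once the lemma is invoked. The one point lying outside this computation is checking that $Y_3=[Y_1,Y_2]$ still generates a one parameter group defined on $M$; this does not follow from the determining equations alone, and I would address it as a separate issue using the completeness already assumed for $Y_1$ and $Y_2$.
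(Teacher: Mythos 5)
Your proposal is correct, and for the drift equation \eqref{equa:eqa} --- the part of the theorem that is genuinely new relative to the earlier weak-symmetry setting --- it follows exactly the paper's route: expand the bracket, eliminate $Y_i(\mu)$ via \eqref{equa:eqa} for $V_1,V_2$, convert the crossed terms $Y_i(L(Y_j))$ into first-order data with equation \eqref{eq_lemmared1} of Lemma \ref{lemma:lea}, and check that the residual $\sigma\cdot H$ terms assemble into $\sigma\cdot H_3$. The real difference lies in the diffusion equation \eqref{equa:eqb}: the paper merely observes that this equation does not involve the components $H_i$ and cites \cite{DMU} for its verification, whereas you give a self-contained argument, reading $\sigma$ column by column, applying the Jacobi identity $[[Y_1,Y_2],\sigma_\alpha]=[Y_1,[Y_2,\sigma_\alpha]]-[Y_2,[Y_1,\sigma_\alpha]]$ and expanding by Leibniz. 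That computation is sound: the quadratic $\tau_1\tau_2$ terms and the mixed $\tau_iC_j$ terms cancel, and the double substitution of the $C_i$ contributions produces exactly the matrix commutator $\{C_1,C_2\}$ with the sign required by $C_3=Y_1(C_2)-Y_2(C_1)-\{C_1,C_2\}$, so your write-up is more complete than the paper's on this point. Two minor remarks: the twisting relations you attribute to \eqref{eq_lemmared2} are obtained in the paper directly from the second determining equation for $V_1,V_2$ (of which \eqref{eq_lemmared2} is essentially a transposed reformulation multiplied by $\nabla f$), so this is a cosmetic rather than substantive difference; and your closing caveat about verifying that $[Y_1,Y_2]$ generates a one-parameter group on $M$ is a hypothesis of Theorem \ref{TheoDetEq} that the paper's sketch passes over silently, so flagging it as a separate issue is appropriate.
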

\begin{proof}
We report here a sketch of the proof. It can also be found in \cite{Cattarin}. Given two symmetries $V_i=(Y_i, C_i, \tau_i, H_i)$, $i=1,2$ we have to prove that their commutator $[V_1,V_2]$  satisfies the two determining equations given in Theorem \ref{TheoDetEq}. Let us consider equation \eqref{equa:eqa} for the commutator
\begin{align*}
&[Y_1,Y_2](\mu) - L([Y_1,Y_2])-\sigma\cdot\left( Y_1(H_2)-Y_2(H_1) \right) \notag\\
-&\sigma\cdot\left( \frac 12 \tau_1 H_2 -C_1\cdot H_2
-\frac 12 \tau_2 H_1+C_2\cdot H_1 \right)+[Y_1(\tau_2)-Y_2(\tau_1)]\mu=0\notag\\
\end{align*}
that can be rewritten as
\begin{align*}
&Y_1\left( Y_2(\mu)-\sigma\cdot H_2+\tau_2 \mu\right) - Y_2\left( Y_1(\mu)-\sigma\cdot H_1+\tau_1 \mu \right) \notag \\
&- L([Y_1,Y_2])+ \tau_1 Y_2(\mu)-\tau_2 Y_1(\mu) \notag \\
&+ \left( Y_1(\sigma)-\frac 12 \tau_1 \sigma + \sigma \cdot C_1 \right)\cdot H_2 - \left( Y_2(\sigma)-\frac 12 \tau_2\sigma + \sigma \cdot C_2 \right)\cdot H_1=0 \notag\\
\end{align*}
Exploiting the determining equations for $V_1$ and $V_2$ we get
\begin{align*}
&Y_1\left(L(Y_2)\right) - Y_2\left( L(Y_1)  \right)- L([Y_1,Y_2]) \notag \\
&+ \tau_1 Y_2(\mu)-\tau_2 Y_1(\mu) + \left(\nabla Y_1\cdot \sigma- \tau_1 \sigma \right)\cdot H_2 - \left(\nabla Y_2\cdot \sigma- \tau_2 \sigma \right)\cdot H_1=0\notag\\
\end{align*}
and using Lemma \ref{lemma:lea} we find
\begin{align*}
&L(Y_1\left(Y_2)\right) -\tau_1L(Y_2) +\nabla Y_2\cdot \sigma\cdot H_1-L(Y_2\left(Y_1)\right) +\tau_2L(Y_1) -\nabla Y_1\cdot \sigma\cdot H_2+ \notag \\
& -L([Y_1,Y_2])+\nabla Y_1\cdot \sigma \cdot H_2 -\tau_1\sigma \cdot H_2 -\nabla Y_2\cdot \sigma \cdot H_1 -\tau_2\sigma \cdot H_1- \tau_2 Y_1(\mu)+ \tau_1 Y_2(\mu)=0. \notag \\
\end{align*}
This equation holds due to the first determining equation for $V_1$ and $V_2$.
The proof of the second determining equation for the commutator $[V_1,V_2]$ does not involve the components $H_i$ and can be found in \cite{DMU}.
\end{proof}

\begin{remark}
The previous Theorem  shows that the family of general symmetries of an SDE  is a Lie algebra (i.e. the commutator of two symmetries can be expressed as a linear combinations of other symmetries with constant coefficients). This requirement is exactly one of the hypotheses of Theorem \ref{theor:tha} below, and will be  essential in the reduction and reconstruction of SDEs.
\end{remark}

\section{Quasi Doob transformations}\label{section_3}

Within the family of general transformations we identify the relevant class of quasi Doob transformations.  For the analogous important definition of generalized Doob transformations in an abstract setting see \cite{doob} and references therein.

\begin{definition}\label{quasidoob}[Quasi Doob transformation] Let $(\mu,\sigma)$ be a non explosive SDE and let $(X,W)$ be a solution to $(\mu,\sigma)$. Given a smooth function $h\colon M\to\mathbb{R}^m$ non explosive with respect to $(\mu,\sigma)$, we say that the stochastic transformation $(id_M,I,1,h)$ is a \emph{quasi Doob transformation with respect to the SDE $(\mu,\sigma)$} if there exists a smooth function  $\mathfrak{h}\colon M\to\mathbb{R}^m$ such that the transformed measure $\mathbb{Q}$ (which, by Definition \ref{defi:dea} of general stochastic transformation, is given by expression \eqref{eq:definitionQ}) satisfies the following condition
\[
\frac{d\mathbb{Q}}{d\mathbb{P}}\Bigg|_{\mathcal{F}_\mathcal{T}}=\exp\left\{\mathfrak{h}(X_\mathcal{T})-\mathfrak{h}(X_0)-\int_{0}^{\mathcal{T}}G_{\mathfrak{h}}((X_s))ds\right\}.
\]
where $\mathcal{T}$ is an arbitrary fixed time and $G_{\mathfrak{h}}$ is a suitable $C^2(\mathbb{R}^m)$ function depending on $\mathfrak{h}$.
\end{definition}

\begin{remark}
Hereafter we call the stochastic transformation $T=(id_M,I,1,h)$, satisfying Definition \ref{quasidoob}, a quasi Doob transformation characterized by the smooth function $\mathfrak{h}$. Furthermore, with an abuse of terminology, we call quasi Doob transformation with respect the SDE $(\mu,\sigma)$ any general stochastic transformation of the form $T' \circ T$, where $T=(id_M,I,1,h)$ is a quasi Doob transformation in the sense of Definition \ref{quasidoob}, and $T'$ is a stochastic transformation of the form $T'=(\Phi,B,\eta,0)$.
\end{remark}

The next result provides a necessary and sufficient explicit condition for the characterization of Girsanov transformations $h$ that are quasi Doob transformations associated with the function $\mathfrak{h}$. In the following we consider only conservative transformations. For non conservative (generalized) Doob transformations see \cite{doob}.
\begin{proposition}\label{PropDoobTransf}
Let $h\colon M\to\mathbb{R}^m$ be a smooth function associated with a random change of measure transformation on $(\mu,\sigma)$. Then $h$ is a quasi Doob transformation associated with the function $\mathfrak{h}\colon M\to\mathbb{R}$ if and only if the following conditions hold
\begin{align}
\label{equa:equata}
h_j(x)&=\sigma_j^i(x)\partial_{i}(\mathfrak{h})(x),\\
\label{equa:equatb}
\frac{1}{2}\sum_{j=1}^m(h_j(x))^2&=\frac{L(\exp(\mathfrak{h}))}{\exp(\mathfrak{h})}-L(\mathfrak{h})(x)=G_{\mathfrak{h}}(x)-L(\mathfrak{h})(x).
\end{align}
\end{proposition}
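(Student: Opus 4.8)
The plan is to compare the two expressions available for the density $\frac{d\mathbb{Q}}{d\mathbb{P}}\big|_{\mathcal{F}_{\mathcal{T}}}$: the Girsanov form prescribed by Definition \ref{defi:dea} and the quasi Doob form required by Definition \ref{quasidoob}. Since $\exp$ is injective, the two exponents must agree, and the bridge between them is It\^o's formula applied to $\mathfrak{h}(X_t)$. First I would expand, along a solution $(X,W)$ of $(\mu,\sigma)$, using $dX_t^i=\mu^i(X_t)\,dt+\sigma_j^i(X_t)\,dW_t^j$ and $d\langle X^i,X^k\rangle_t=2A^{ik}(X_t)\,dt$,
\[
\mathfrak{h}(X_{\mathcal{T}})-\mathfrak{h}(X_0)=\int_0^{\mathcal{T}}L(\mathfrak{h})(X_s)\,ds+\int_0^{\mathcal{T}}\sigma_j^i(X_s)\,\partial_i\mathfrak{h}(X_s)\,dW_s^j .
\]
Substituting this into the quasi Doob exponent $\mathfrak{h}(X_{\mathcal{T}})-\mathfrak{h}(X_0)-\int_0^{\mathcal{T}}G_{\mathfrak{h}}(X_s)\,ds$ recasts it as a finite variation integral plus an It\^o integral, so that the whole characterization reduces to an identity between two continuous semimartingales built from the same Brownian motion.

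For the forward implication I would separate the canonical parts of this identity. Taking the quadratic covariation of both sides with $W^k$ annihilates the finite variation terms and forces $h_k(X_s)=\sigma_k^i(X_s)\,\partial_i\mathfrak{h}(X_s)$ for almost every $s$, which upgrades to the pointwise relation \eqref{equa:equata} on $M$. Once the martingale parts coincide, the finite variation parts must coincide too, yielding $-\tfrac12\sum_j h_j^2=L(\mathfrak{h})-G_{\mathfrak{h}}$, i.e.\ \eqref{equa:equatb}. The reverse implication follows by running the same substitution backwards: assuming \eqref{equa:equata} and \eqref{equa:equatb}, the Girsanov exponent collapses exactly onto the quasi Doob one.

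It remains to identify $G_{\mathfrak{h}}$ with $L(e^{\mathfrak{h}})/e^{\mathfrak{h}}$. From $\partial_i(e^{\mathfrak{h}})=e^{\mathfrak{h}}\partial_i\mathfrak{h}$ and $\partial_{ik}(e^{\mathfrak{h}})=e^{\mathfrak{h}}(\partial_i\mathfrak{h}\,\partial_k\mathfrak{h}+\partial_{ik}\mathfrak{h})$ one gets $L(e^{\mathfrak{h}})/e^{\mathfrak{h}}=A^{ik}\partial_i\mathfrak{h}\,\partial_k\mathfrak{h}+L(\mathfrak{h})$; using $A=\tfrac12\sigma\sigma^T$ together with \eqref{equa:equata} gives $A^{ik}\partial_i\mathfrak{h}\,\partial_k\mathfrak{h}=\tfrac12\sum_j(\sigma_j^i\partial_i\mathfrak{h})^2=\tfrac12\sum_j h_j^2$, which is precisely \eqref{equa:equatb} and pins down the form of $G_{\mathfrak{h}}$.

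The main obstacle is the passage from the equality of integral processes to the pointwise equality of smooth coefficients in \eqref{equa:equata}. Rigorously this rests on the uniqueness of the continuous semimartingale decomposition together with a support argument for the diffusion $X$: an identity holding along almost every path must in fact hold as an identity of smooth functions on $M$, which is where some nondegeneracy of $(\mu,\sigma)$ enters.
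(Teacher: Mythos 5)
Your proposal follows essentially the same route as the paper's proof: match the Girsanov and quasi-Doob exponents, apply It\^o's formula to $\mathfrak{h}(X_t)$, equate the martingale and finite-variation parts via uniqueness of the continuous semimartingale decomposition (your quadratic-covariation-with-$W^k$ step is just a concrete implementation of that uniqueness, which the paper invokes together with the martingale representation theorem), and then carry out the same computation of $L(e^{\mathfrak{h}})/e^{\mathfrak{h}}$ to identify $G_{\mathfrak{h}}$. The a.e.-along-paths-to-pointwise-on-$M$ subtlety you flag at the end is genuine, but the paper's proof passes over it silently as well, so your argument is not weaker than the published one.
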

\begin{proof}
Matching the Radon-Nikodym derivative given in Theorem \ref{theo:theg} with the one in Definition \ref{quasidoob} we have
\[
\int_0^\mathcal{T}h_j(X_s)dW^j_s-\frac{1}{2}\int_0^\mathcal{T}\sum_{j=1}^m(h_j(X_s))^2ds=\mathfrak{h}(X_\mathcal{T})-\mathfrak{h}(X_0)-\int_{0}^{\mathcal{T}}G_{\mathfrak{h}}(X_s)ds.
\]
By applying It\^o formula to $\mathfrak{h}$ we get
\[
\mathfrak{h}(X_\mathcal{T})-\mathfrak{h}(X_0)=\int_0^\mathcal{T}L\mathfrak{h}(X_t)dt+\int_0^\mathcal{T}\nabla\mathfrak{h}(X_t)\sigma(X_t)dW_t.
\]
By uniqueness of the canonical semimartingale decomposition of continuous processes (see Section 31 and Definition 31.3 in \cite{RoWi2000}) and by uniqueness of the martingale representation theorem for processes adapted to Brownian filtrations (see Theorem 36.1 in \cite{RoWi2000}) we deduce the equality between the integrands of corresponding stochastic integrals, obtaining
\begin{align}
\label{equa:equata2}
h_j(x)&=\sigma_j^i(x)\partial_{i}(\mathfrak{h})(x),\\
\label{equa:equatb2}
\frac{1}{2}\sum_{j=1}^m(h_j(x))^2&=G_{\mathfrak{h}}(x)-L(\mathfrak{h}(x)).
\end{align}
In particular by \eqref{equa:equatb2} and \eqref{eqGENERATORL} we get
\begin{align}
G_{\mathfrak{h}}(x)&=\frac{1}{2}\sum_{j=1}^m(h_j(x))^2+\frac{1}{2}\left(\sigma \sigma^T\right)^{ij} \partial_i \partial_j\mathfrak{h}+ \mu^i \partial_i\mathfrak{h}\\
&=\frac{1}{2}\sum_{j=1}^m(\sigma_j^i(x)\partial_{i}(\mathfrak{h})(x))^2+\frac{1}{2}\left(\sigma \sigma^T\right)^{ij} \partial_i \partial_j\mathfrak{h}+ \mu^i \partial_i\mathfrak{h},
\end{align}
where we used \eqref{equa:equata2}. Since
\begin{equation}
\partial_i \partial_j\exp(\mathfrak{h})=\exp(\mathfrak{h})[\partial_i \partial_j\mathfrak{h}+(\partial_i\mathfrak{h})(\partial_j\mathfrak{h})]
\end{equation}
we have 
\begin{align*}
\frac{L(\exp(\mathfrak{h}))}{\exp(\mathfrak{h})}&=\frac{1}{2}\sum_{j=1}^m(\sigma_j^i(x)\partial_{i}(\mathfrak{h})(x))^2+\frac{1}{2}\left(\sigma \sigma^T\right)^{ij} \partial_i \partial_j\mathfrak{h}+ \mu^i \partial_i\mathfrak{h}\\
&=G_{\mathfrak{h}}(x).
\end{align*}
This prove that, if $h$ is a quasi Doob transformation, conditions \eqref{equa:equata} and \eqref{equa:equatb} hold. The converse is a consequence of It\^o formula applied to the function $\mathfrak{h}$.
\end{proof}
We remark that, according to Proposition \ref{PropDoobTransf}, the fact that a change of measure is a quasi  Doob transformation strongly depends on the SDE $(\mu,\sigma)$. In particular equation \eqref{equa:equata} depends on $\sigma$ and equation \eqref{equa:equatb} depends, through the operator $L$, on both $\mu$ and $\sigma$.\\

The next proposition states that quasi Doob transformations is, in some sense, a closed class with respect to composition of general stochastic transformations.

\begin{proposition}Let $(\mu,\sigma)$ be a non explosive SDE. Let $T_1=(\Phi_1,B_1,\eta_1,h_1)$ be a quasi Doob transformation with respect to $(\mu,\sigma)$ and let $T_2=(\Phi_2,B_2,\eta_2,h_2)$ be a quasi Doob transformation with respect to $E_{T_1}(\mu,\sigma)$. Then $T_2 \circ	T_1$ is a quasi Doob transformation with respect to the SDE $(\mu,\sigma)$.
\end{proposition}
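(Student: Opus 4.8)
The plan is to peel off the diffeomorphism/rotation/time-change part of the composite transformation and reduce the whole statement to checking that the remaining pure Girsanov part is a strict quasi Doob transformation in the sense of Definition \ref{quasidoob}, which by Proposition \ref{PropDoobTransf} amounts to exhibiting a single scalar potential.

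First I would record what the hypotheses give. By the abuse of terminology preceding the statement, $T_1=(\Phi_1,B_1,\eta_1,h_1)$ being quasi Doob with respect to $(\mu,\sigma)$ means precisely that $(id_M,I,1,h_1)$ is a strict quasi Doob transformation with respect to $(\mu,\sigma)$ (indeed $(\Phi_1,B_1,\eta_1,0)\circ(id_M,I,1,h_1)=T_1$ by \eqref{eq:composition}); likewise $(id_M,I,1,h_2)$ is a strict quasi Doob transformation with respect to $E_{T_1}(\mu,\sigma)$. Proposition \ref{PropDoobTransf} then furnishes scalar potentials $\mathfrak{h}_1\colon M\to\mathbb{R}$ and $\mathfrak{h}_2\colon M'\to\mathbb{R}$ with $h_1=\sigma^T\nabla\mathfrak{h}_1$ and $h_2=(E_{T_1}(\sigma))^T\nabla\mathfrak{h}_2$.

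Next, using the composition law \eqref{eq:composition}, I would write $T_2\circ T_1=(\Phi_2\circ\Phi_1,(B_2\circ\Phi_1)B_1,(\eta_2\circ\Phi_1)\eta_1,\tilde h)$ with $\tilde h=\sqrt{\eta_1}B_1^{-1}(h_2\circ\Phi_1)+h_1$, and observe, again from \eqref{eq:composition}, that this equals $(\Phi_2\circ\Phi_1,(B_2\circ\Phi_1)B_1,(\eta_2\circ\Phi_1)\eta_1,0)\circ(id_M,I,1,\tilde h)$. By the abuse of terminology it therefore suffices to show that $(id_M,I,1,\tilde h)$ is a strict quasi Doob transformation with respect to $(\mu,\sigma)$. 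Here the decisive observation, which makes the proof short, is that condition \eqref{equa:equatb} is not an extra requirement: the computation in the proof of Proposition \ref{PropDoobTransf} shows $\frac{L(\exp\mathfrak{h})}{\exp\mathfrak{h}}-L(\mathfrak{h})=\frac12\sum_j(\sigma^i_j\partial_i\mathfrak{h})^2$ for \emph{every} smooth $\mathfrak{h}$, so once \eqref{equa:equata} holds for some potential, \eqref{equa:equatb} follows automatically. The task thus collapses to producing a single $\mathfrak{h}$ with $\tilde h=\sigma^T\nabla\mathfrak{h}$, and the natural candidate is $\mathfrak{h}=\mathfrak{h}_1+\mathfrak{h}_2\circ\Phi_1$.

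Finally I would verify this candidate by a direct matrix computation, which I expect to be the main (though routine) obstacle. Evaluating $E_{T_1}(\sigma)=\frac{1}{\sqrt{\eta_1}}(\nabla\Phi_1\cdot\sigma\cdot B_1^{-1})\circ\Phi_1^{-1}$ (Definition \ref{defi:deb}) at $\Phi_1(x)$, transposing, and using $B_1\in SO(m)$ (so $(B_1^{-1})^T=B_1$) gives $h_2(\Phi_1(x))=\frac{1}{\sqrt{\eta_1(x)}}B_1(x)\,\sigma(x)^T\,\nabla\Phi_1(x)^T\,(\nabla\mathfrak{h}_2)(\Phi_1(x))$. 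Multiplying by $\sqrt{\eta_1}B_1^{-1}$ cancels the scalar and rotation factors, and the chain-rule identity $\nabla\Phi_1^T(\nabla\mathfrak{h}_2\circ\Phi_1)=\nabla(\mathfrak{h}_2\circ\Phi_1)$ yields $\sqrt{\eta_1}B_1^{-1}(h_2\circ\Phi_1)=\sigma^T\nabla(\mathfrak{h}_2\circ\Phi_1)$. Adding $h_1=\sigma^T\nabla\mathfrak{h}_1$ produces $\tilde h=\sigma^T\nabla(\mathfrak{h}_1+\mathfrak{h}_2\circ\Phi_1)$, confirming \eqref{equa:equata} and hence, by the remark above, \eqref{equa:equatb}. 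This shows $(id_M,I,1,\tilde h)$ is a strict quasi Doob transformation and completes the argument. The only points requiring care are the transpose/inverse bookkeeping for $B_1\in SO(m)$ and the chain rule recognizing $\mathfrak{h}_2\circ\Phi_1$ as the pulled-back potential.
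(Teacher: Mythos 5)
Your proposal is correct and follows essentially the same route as the paper's own proof: both reduce to exhibiting a scalar potential for the composed Girsanov component via the composition law \eqref{eq:composition}, and both arrive at the same potential $\mathfrak{h}_3=\mathfrak{h}_1+\mathfrak{h}_2\circ\Phi_1$ through the identical matrix computation (transpose of $E_{T_1}(\sigma)$, cancellation of $\sqrt{\eta_1}$ and $B_1\in SO(m)$, chain rule). Your explicit observation that condition \eqref{equa:equatb} is automatic once \eqref{equa:equata} holds is a point the paper leaves implicit (it only verifies the gradient condition), so your write-up is, if anything, slightly more careful on that score.
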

\begin{proof}
	Since $T_1$ and $T_2$ are quasi Doob transformations, there exist two smooth functions $\mathfrak{h}_1, \mathfrak{h}_2$ such that
	\begin{equation}\label{Almostdoobconditions}
	h_1=\sigma^T\cdot \nabla (\mathfrak{h}_1), \quad h_2=\tilde \sigma^T\cdot \nabla (\mathfrak{h}_2)
	\end{equation}
	where $\tilde \sigma=\left(\frac{1}{\sqrt{\eta_1}}\nabla \Phi_1 \cdot \sigma \cdot B_1^T\right) \circ \Phi^{-1}_1$ is given by Definition \ref{defi:deb} (see also Theorem \ref{theo:thea}). In order to show that also the composition is a quasi Doob transformation we have to show that  the forth component  $\hat{h}$ of the stochastic composition $T_2 \circ T_1$
is the given by
	\begin{equation}\label{eq:hath}\hat{h}=\sigma^T \cdot \nabla(\mathfrak{h}_3)
	\end{equation}
	for some smooth function $\mathfrak{h_3}$. By equation \eqref{eq:composition} the expression of $\hat{h}$ is
	$$ \hat h= \sqrt{\eta_1} B_1^T \cdot (h_2  \circ{ \Phi}_1) + h_1.$$
	Since $T_1$ and $T_2$ are quasi Doob transformations, by \eqref{Almostdoobconditions}  we have
	$$\hat h=\sqrt{\eta_1} B_1^T \cdot ( (\tilde \sigma^T\cdot\nabla (\mathfrak{h}_2)) \circ \Phi_1)+ \sigma^T\cdot \nabla (\mathfrak{h}_1).$$
	Substituting  the expression of $\tilde \sigma^T\circ \Phi_1=\frac{1}{\sqrt{\eta_1}}B_1 \cdot \sigma^T \cdot (\nabla \Phi_1)^T$, using the fact that $B_1^TB_1=1$ and the chain rule for derivatives of composite functions we finally obtain
	$$\hat h= \sigma^T[ (\nabla \Phi_1)^T \cdot (\nabla (\mathfrak{h}_2)\circ \Phi_1) +   \nabla (\mathfrak{h}_1)]=\sigma^T\nabla(\mathfrak{h}_2 \circ \Phi_1+ \mathfrak{h}_1),$$
	and so $\hat{h}$ has the expression \eqref{eq:hath} for $\mathfrak{h}_3=\mathfrak{h}_2 \circ \Phi_1+ \mathfrak{h}_1$.
\end{proof}
If in equation \eqref{equa:equata}  both $h_a$ and $\mathfrak{h}_a$ depend on a parameter $a$ and we take the derivative with respect to that parameter in $a=0$ with initial condition $h_0=0$ and $\mathfrak{h}_0=0$, we obtain that there exists a function $k=\partial_a(\mathfrak{h}_a)\vert_{a=0}$ such that
\begin{align}
\label{equa:equatc}
H_j(x)&=\sigma_j^i(x)\partial_{x^i}(k)(x),
\end{align}
Since quasi Doob transformations form a closed class in the sense of the previous proposition, the following Theorem
provides the determining equations for the infinitesimal symmetries of quasi Doob type.
\begin{theorem}\label{TheoDOOB}
An infinitesimal stochastic transformation $V=(Y,C,\tau,H)$ ( with $H=\sigma^T\cdot\nabla k$) is a symmetry of the SDE $(\mu,\sigma)$ involving only quasi Doob transformations with respect to $(\mu,\sigma)$ if and only if $V$ generates a one parameter group of transformations such that the following equations hold
\begin{align}
Y(\mu)-L(Y)-\sigma\cdot\sigma^T\cdot\nabla k+\tau\mu&=0 \notag \\
[Y,\sigma]+\frac{1}{2}\tau\sigma+\sigma\cdot C&=0 \notag \\
\end{align}
\end{theorem}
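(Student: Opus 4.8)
The plan is to show that the stated system is simply the general determining equations of Theorem \ref{TheoDetEq}, specialized to the case in which the change of measure is infinitesimally of quasi Doob type. The key observation is that the two theorems differ only in how the fourth component $H$ enters the first determining equation \eqref{equa:eqa}. In the general case \eqref{equa:eqa} reads $Y(\mu)-L(Y)-\sigma\cdot H+\tau\mu=0$, whereas in the quasi Doob case I want to substitute $H=\sigma^T\cdot\nabla k$ and obtain $Y(\mu)-L(Y)-\sigma\cdot\sigma^T\cdot\nabla k+\tau\mu=0$. The second determining equation \eqref{equa:eqb} does not involve $H$ at all, so it is carried over unchanged. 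Thus the heart of the matter is to justify the infinitesimal constraint $H=\sigma^T\cdot\nabla k$ as the correct characterization of symmetries that stay within the quasi Doob subclass.

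First I would invoke the closedness of the quasi Doob class under composition established in the preceding Proposition: a symmetry generated by $V$ consists of a one parameter group $T_a$, and requiring each $T_a$ to be a quasi Doob transformation with respect to $(\mu,\sigma)$ forces the finite condition \eqref{equa:equata}, namely $h_{a,j}=\sigma^i_j\partial_i(\mathfrak{h}_a)$, to hold for every $a$ (with $\mathfrak{h}_0=0$ and $h_0=0$). Differentiating this relation in $a$ at $a=0$ and setting $k=\partial_a(\mathfrak{h}_a)\vert_{a=0}$ yields exactly the identity \eqref{equa:equatc}, i.e. $H_j=\sigma^i_j\partial_i(k)$, which in matrix form is $H=\sigma^T\cdot\nabla k$. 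This is precisely the differentiated quasi Doob condition already recorded in the text just before the statement, so the verification here is the routine check that the parametrized family $(h_a,\mathfrak{h}_a)$ is smooth enough to differentiate under the structural constraint.

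The remaining step is to feed this constraint into Theorem \ref{TheoDetEq}. Since $V=(Y,C,\tau,H)$ is assumed to generate a one parameter group defined on all of $M$, Theorem \ref{TheoDetEq} applies and says $V$ is an infinitesimal symmetry of $(\mu,\sigma)$ if and only if \eqref{equa:eqa} and \eqref{equa:eqb} hold. Substituting $H=\sigma^T\cdot\nabla k$ into \eqref{equa:eqa} turns $\sigma\cdot H$ into $\sigma\cdot\sigma^T\cdot\nabla k$, producing the first equation of the theorem, while \eqref{equa:eqb} reproduces the second equation verbatim. Conversely, if the two stated equations hold with $H$ of the form $\sigma^T\cdot\nabla k$, then \eqref{equa:eqa} and \eqref{equa:eqb} hold, so $V$ is a symmetry by Theorem \ref{TheoDetEq}, and the form of $H$ guarantees via \eqref{equa:equatc} that the generated flow remains within the quasi Doob class.

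I expect the main obstacle to be the careful justification of the differentiation argument: one must argue that imposing the quasi Doob property on every member $T_a$ of the one parameter group, rather than merely at $a=0$, is what legitimately produces the pointwise infinitesimal relation $H=\sigma^T\cdot\nabla k$, and conversely that this infinitesimal relation is enough to reconstruct a genuinely quasi Doob one parameter group via the reconstruction equations for $h_a$ given earlier. The algebraic substitution into the determining equations is then purely mechanical; the conceptual content lies entirely in linking the finite quasi Doob condition \eqref{equa:equata} to its infinitesimal counterpart \eqref{equa:equatc} and in using the composition-closedness Proposition to ensure the flow does not leave the subclass.
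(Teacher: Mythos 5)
Your proposal is correct and follows essentially the same route the paper takes: the paper gives no separate proof of Theorem \ref{TheoDOOB}, treating it as an immediate consequence of the differentiation of the finite quasi Doob condition \eqref{equa:equata} at $a=0$ (which yields \eqref{equa:equatc}, i.e. $H=\sigma^T\cdot\nabla k$), the closedness-under-composition proposition, and substitution into the general determining equations of Theorem \ref{TheoDetEq}, whose second equation is untouched since it does not involve $H$. Your reconstruction supplies exactly these steps, and the obstacle you flag (passing between the finite condition on every $T_a$ and its infinitesimal counterpart, and back) is precisely the point the paper itself leaves implicit.
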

In the following an infinitesimal stochastic symmetry satisfying the hypotheses of Theorem \ref{TheoDOOB}  is called a \emph{quasi Doob symmetries} for the SDE $(\mu, \sigma)$.
\begin{remark}
	When in Definition \ref{quasidoob} we take
	\[
	G_{\mathfrak{h}}(x)=0
	\]
	we obtain the well-known class of Doob transformations. They provide a quite natural setting when we look for symmetries of an SDE. Indeed, in our previous paper \cite{DMU3} it was established  a one-to-one correspondence between infinitesimal symmetries of Doob type of an SDE and Lie's point infinitesimal symmetries of the corresponding Kolmogorov equation. In general the family of symmetries of an SDE can be  wider than the family  of the symmetries for the corresponding Kolmogorov equation as pointed out in \cite{DMU3}. The non Doob type symmetries denoted here as quasi Doob symmetries are a particularly interesting class. The other class of non Doob symmetries can be characterized as the class  of infinitesimal transformations $V=(Y,C,\tau, H)$ such that does not exist any function $k$ for which  $H= \sigma^T\cdot \nabla k $. The last family is not empty only for SDEs driven by  $m$ dimensional  Brownian motions with $m>1$, while for SDEs driven by one-dimensional Brownian motions  only quasi Doob symmetries can exist. For a complete discussion see \cite{DMU3}.
\end{remark}

In order to restrict our reduction and reconstruction procedure to quasi Doob symmetries we have to prove the following result.
\begin{theorem}\label{TheoCommunatorQuasiDoob}
Given two quasi Doob infinitesimal symmetries  $V_1=(Y_1,C_1,\tau_1, H_1)$ and $V_2=(Y_2,C_2,\tau_2, H_2)$ (where $H_i=\sigma^T \cdot \nabla k_i$) of a non explosive SDE $(\mu, \sigma)$, the commutator $[V_1,V_2]$ is an infinitesimal quasi Doob symmetry of $(\mu, \sigma)$. Moreover, if $V_1$ and $V_2$ are Doob symmetries, also $[V_1,V_2]$ is a Doob symmetry.
\end{theorem}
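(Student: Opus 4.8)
The plan is to build on Theorem \ref{TheoCommunatorSymm}: since $V_1$ and $V_2$ are in particular general symmetries, that theorem already guarantees that $[V_1,V_2]$ satisfies both determining equations of Theorem \ref{TheoDetEq}, so it is a general symmetry of $(\mu,\sigma)$. Hence the only thing left to verify is the \emph{extra} structure distinguishing a quasi Doob symmetry, namely that the fourth component $H_{[1,2]}$ of $[V_1,V_2]$ can be written as $\sigma^T\cdot\nabla k_3$ for some scalar function $k_3$. Reading this component off the Lie bracket formula, I would start from
\[
H_{[1,2]}=Y_1(H_2)-Y_2(H_1)+\tfrac{\tau_1}{2}H_2-C_1\cdot H_2-\tfrac{\tau_2}{2}H_1+C_2\cdot H_1,
\]
and substitute $H_1=\sigma^T\cdot\nabla k_1$ and $H_2=\sigma^T\cdot\nabla k_2$.

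The key step is to simplify $Y_1(H_2)$. Applying the vector field $Y_1$ componentwise and using the product rule, I would split it into a term where $Y_1$ hits $\sigma^T$ and a term where it hits $\nabla k_2$. For the first I invoke the second identity of Lemma \ref{lemma:lea}, equation \eqref{eq_lemmared2}, with $f=k_2$; for the second I use the elementary commutation rule $Y_1(\nabla k_2)=\nabla(Y_1(k_2))-(\nabla Y_1)^T\cdot\nabla k_2$. The two copies of $\sigma^T\cdot(\nabla Y_1)^T\cdot\nabla k_2$ then cancel, leaving
\[
Y_1(H_2)=-\tfrac{\tau_1}{2}H_2+C_1\cdot H_2+\sigma^T\cdot\nabla(Y_1(k_2)),
\]
and symmetrically for $Y_2(H_1)$. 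Substituting these into the expression for $H_{[1,2]}$, all the rotation ($C_i$) and scaling ($\tau_i$) contributions cancel exactly, and I am left with $H_{[1,2]}=\sigma^T\cdot\nabla\bigl(Y_1(k_2)-Y_2(k_1)\bigr)$. This exhibits $[V_1,V_2]$ as a quasi Doob symmetry with $k_3=Y_1(k_2)-Y_2(k_1)$.

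For the second assertion I would first record the infinitesimal Doob condition: differentiating the finite Doob relation $G_{\mathfrak h_a}=L(\exp\mathfrak h_a)/\exp(\mathfrak h_a)=0$ of Proposition \ref{PropDoobTransf} at $a=0$, using $\mathfrak h_0=0$, gives $L(k_i)=0$, i.e. $k_1$ and $k_2$ are $L$-harmonic. Feeding $L(k_2)=0$ into the first identity of Lemma \ref{lemma:lea}, equation \eqref{eq_lemmared1}, with $f=k_2$ and $V=V_1$, yields $L(Y_1(k_2))=-\nabla(k_2)\cdot\sigma\cdot H_1$. Writing this in indices with $H_1=\sigma^T\cdot\nabla k_1$ turns the right-hand side into the quadratic form $(\partial_i k_2)(\partial_j k_1)(\sigma\sigma^T)^{ij}$, which is symmetric under the exchange $k_1\leftrightarrow k_2$ because $\sigma\sigma^T$ is symmetric. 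Hence $L(Y_1(k_2))=L(Y_2(k_1))$, so $L(k_3)=0$ and $[V_1,V_2]$ is again a Doob symmetry.

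The main obstacle I expect is the bookkeeping in the simplification of $Y_1(H_2)$: the cancellation that produces the clean form $\sigma^T\cdot\nabla(Y_1(k_2))$ hinges on applying equation \eqref{eq_lemmared2} and the commutator of $Y$ with $\nabla$ in exactly matching index conventions, where it is easy to mismatch the transpose in $(\nabla Y_1)^T$. Once that identity is secured the remaining cancellations are routine; for the Doob refinement the only real content is recognizing the $\sigma\sigma^T$-symmetry of the quadratic form.
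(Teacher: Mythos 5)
Your proposal is correct and follows essentially the same route as the paper: both invoke Theorem \ref{TheoCommunatorSymm} to dispose of the determining equations, then apply identity \eqref{eq_lemmared2} of Lemma \ref{lemma:lea} together with the commutation rule $Y(\nabla k)=\nabla(Y(k))-(\nabla Y)^T\cdot\nabla k$ to cancel the $\tau_i$ and $C_i$ terms and obtain $H_{[1,2]}=\sigma^T\cdot\nabla\bigl(Y_1(k_2)-Y_2(k_1)\bigr)$, and both settle the Doob case via identity \eqref{eq_lemmared1} and the symmetry of $\sigma\sigma^T$. The only (harmless) difference is organizational: you isolate the intermediate identity for $Y_1(H_2)$ and explicitly derive $L(k_i)=0$ from the finite condition $G_{\mathfrak{h}_a}=0$, steps the paper performs in-line or leaves implicit.
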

\begin{proof} Since, by Theorem \ref{TheoCommunatorSymm}, the commutator of two (general) stochastic infinitesimal symmetries is a general stochastic infinitesimal symmetry, and being the quasi Doob symmetries a special case of general symmetries, we have that the commutator of two quasi Doob symmetries of the SDE $(\mu,\sigma)$ is a general symmetry of the SDE $(\mu,\sigma)$. Therefore, we have only to prove that the commutator is also an infinitesimal quasi Doob transformation, namely that
\begin{align*}
H&=Y_1(H_2)-Y_2(H_1)-\left( -\frac 12 \tau_1+C_1\right)\cdot H_2+ \left( -\frac 12 \tau_2+C_2\right)\cdot H_1 \notag \\
&=\sigma^T\cdot \nabla k \notag
\end{align*}
for a suitable function $k$.
Using the fact $H_i= \sigma^T\cdot \nabla k_i$, we can write $H$ in the form
\begin{align*}
H&=Y_1(\sigma^T\cdot \nabla k_2)-Y_2(\sigma^T\cdot \nabla k_1)-\left( -\frac 12 \tau_1+C_1\right)\cdot\sigma^T\cdot \nabla k_2+ \left( -\frac 12 \tau_2+C_2\right)\cdot\sigma^T\cdot \nabla k_1 \notag
\end{align*}
and, by Lemma \ref{lemma:lea}, we find
\begin{align*}
H&=\sigma^T\cdot (\nabla Y_1)^T\cdot \nabla k_2-\frac 12 \tau_1\sigma^T \cdot \nabla k_2+C_1 \cdot \sigma^T \cdot \nabla k_2 +\sigma^T \cdot Y_1(\nabla k_2) \notag \\
-&\sigma^T\cdot (\nabla Y_2)^T\cdot \nabla k_1+\frac 12 \tau_2\sigma^T \cdot \nabla k_1-C_2 \cdot \sigma^T \cdot \nabla k_1 -\sigma^T \cdot Y_2(\nabla k_1) \notag \\
+& \frac 12 \tau_1\sigma^T \cdot \nabla k_2-C_1 \cdot \sigma^T \cdot \nabla k_2-\frac 12 \tau_2\sigma^T \cdot \nabla k_1+C_2 \cdot \sigma^T \cdot \nabla k_1 \notag \\
=&\sigma^T \cdot\nabla (Y_1(k_2)-Y_2(k_1))\notag
\end{align*}
Finally we have to prove that if $V_1$ and $V_2$ are Doob symmetries, then also $[V_1,V_2]$ is a Doob symmetry, i.e. if $L(k_i)=0$ for $i=1,2$, then $L(Y_1(k_2)-Y_2(k_1))=0$.
Using Lemma \ref{lemma:lea}  we get
\begin{align}
L(Y_1(k_2)-Y_2(k_1))=-\nabla k_2 \cdot \sigma\cdot \sigma^T \cdot \nabla k_1+ \nabla k_1 \cdot \sigma\cdot \sigma^T \cdot \nabla k_2
\end{align}
and this expression vanishes since $\sigma\cdot \sigma^T$ is a symmetric matrix. This concludes the proof.
\end{proof}

\section{Reduction and reconstruction }\label{section_4}

In this section we generalize the results of \cite{DMU2} providing a reduction and reconstruction scheme for SDEs admitting the general symmetries described above.\\
We start by introducing some useful tools, that are standard in the Lie symmetry analysis of deterministic differential equations. Secondly, we recall the notions of reduced SDE, reduced process and triangular SDE proposed in \cite{DMU2}. The idea is that, given an SDE defined on a $n$-dimensional smooth manifold $M$ and admitting a suitable Lie algebra of general infinitesimal symmetries, we can find a reduced SDE which is defined in a lower dimensional manifold. Furthermore, if the initial SDE is triangular, we can obtain the solution to the original SDE from the solution to the reduced one by using composition with smooth functions and Riemann and It\^o integrals (see also the more precise Definition \ref{standard_reconstruction}). Finally, we extend this idea to the general setting introduced in this paper, introducing a new notion of reconstruction for SDEs admitting infinitesimal symmetries with random change of measure (see Definition \ref{definition_integrable}).

\subsection{Some preliminary geometric results}

Let us start by recalling some definitions we need in the following.

\begin{definition}
Let $M$ be an open subset of $\mathbb{R}^n$  and $TM$ its tangent space. Given $k$ vector fields $Y_1,...,Y_k$  defined on $M$,  we say that $\Delta \subset TM$  is a distribution of constant rank $r\leq n$ generated by $Y_1,...,Y_k$ if
$$\Delta=\spann_{C^{\infty}(M)}\{Y_1,...,Y_k\},$$
and,  for any $x\in M$, the subspace
$$\Delta_x=\spann_{\mathbb{R}}\{Y_1(x),...,Y_k(x)\} \subset \mathbb{R}^n $$
has dimension $r$.
\end{definition}

\begin{definition}\label{definition_regular_vectors}
A set of vector fields $Y_1,...,Y_r$  is regular on $M$ if the distribution $\Delta=\spann_{C^{\infty}(M)}\{Y_1,...,Y_r\}$  has constant rank r, i.e. for any $x \in M$, the vectors $Y_1(x),...,Y_r(x)\in \mathbb{R}^n$ are linearly independent.
\end{definition}

\begin{definition}\label{definition_solvable_coordinate}
Let $Y_1,...,Y_r$ be a set of regular vector fields on $M$ which are generators of a solvable Lie algebra $\mathfrak{g}$. We say that $Y_1,...,Y_r$ are in \emph{canonical  form} if there are $i_1,...,i_l$ such that $i_1+...+i_l=r$ and, for any $x \in M$
$$(Y_1|...|Y_r)=\left(\begin{array}{c|c|c|c}
I_{i_1} & G^1_1(x) & ... & G^1_l(x) \\
\hline
0 & I_{i_2} & ... & G^2_l(x)\\
\hline
\vdots & \ddots & \ddots & \vdots \\
0 & 0 & ... & I_{i_l}\\
\hline
 0 & 0 & 0 & 0 \end{array} \right), $$
where  $G^h_k:M \rightarrow Mat(i_h,i_k)$ are smooth functions.
\end{definition}

\begin{theorem}\label{theorem_solvable_coordinate}
Let $\mathfrak{g}$ be a solvable Lie algebra on $M$ such that $\mathfrak{g}$ has constant rank $r$ as a  distribution of $TM$.  Then, for any  $x_0 \in M$, there is a set of generators $Y_1,...,Y_r$  of $\mathfrak{g}$ and a local diffeomorphism $\Phi:U(x_0) \rightarrow \tilde{M}$ such that $\Phi_*(Y_1),...,\Phi_*(Y_r)$ are generators in canonical form for $\Phi_*(\mathfrak{g})$ (where the pushforward $\Phi_*(Y_i)$ of vector field is defined in equation \eqref{eq:pushforward}).
\end{theorem}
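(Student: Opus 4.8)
The plan is to combine the structure theory of solvable Lie algebras with successive flow-box straightenings, arguing by induction on the derived length $l$ of $\mathfrak{g}$. Before starting the induction I would record a preliminary remark that eliminates all regularity concerns about subalgebras. Since $\mathfrak{g}$ is an $r$-dimensional solvable Lie algebra of vector fields of constant rank $r$, the evaluation map $\mathfrak{g}\to\Delta_x$ is a linear isomorphism onto the $r$-dimensional fibre $\Delta_x$ for every $x\in M$. Consequently any basis of $\mathfrak{g}$ is pointwise linearly independent, and the same holds for every subset of such a basis. Hence \emph{every} linear subspace $\mathfrak{k}\subseteq\mathfrak{g}$ is automatically a regular distribution of rank $\dim\mathfrak{k}$; in particular every term of any flag of subalgebras of $\mathfrak{g}$ is regular, so I never have to exclude singular points when straightening.

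The algebraic input is the derived series $\mathfrak{g}=\mathfrak{g}^{(0)}\supseteq\mathfrak{g}^{(1)}\supseteq\cdots\supseteq\mathfrak{g}^{(l)}=\{0\}$, whose terms are (characteristic) ideals of $\mathfrak{g}$ with abelian successive quotients. Reversing it gives a flag of ideals $\{0\}=\mathfrak{k}_0\subset\mathfrak{k}_1\subset\cdots\subset\mathfrak{k}_l=\mathfrak{g}$, with $\mathfrak{k}_{j-1}$ an ideal of $\mathfrak{g}$ and $\mathfrak{k}_j/\mathfrak{k}_{j-1}$ abelian; I set $i_j=\dim(\mathfrak{k}_j/\mathfrak{k}_{j-1})$, so that $i_1+\cdots+i_l=r$, and I choose a basis $Y_1,\dots,Y_r$ adapted to this flag, i.e.\ $Y_1,\dots,Y_{i_1+\cdots+i_j}$ spans $\mathfrak{k}_j$. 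The bottom ideal $\mathfrak{k}_1=\mathfrak{g}^{(l-1)}$ is abelian and regular of rank $i_1$, hence by the flow-box theorem for commuting, pointwise-independent vector fields there are coordinates near $x_0$ in which $Y_1,\dots,Y_{i_1}$ become $\partial_1,\dots,\partial_{i_1}$; these give the first block-column $(I_{i_1},0,\dots,0)^T$ of the canonical form.

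The inductive step uses the ideal property to descend to a quotient of dimension $n-i_1$. Writing a general $Y\in\mathfrak{g}$ as $Y=\sum_{a\le i_1}Y^a\partial_a+\sum_{b>i_1}Y^b\partial_b$, the fact that $\mathfrak{k}_1$ is an ideal forces $[\partial_a,Y]\in\mathfrak{k}_1$ for $a\le i_1$, i.e.\ $\partial_a(Y^b)=0$ for all $b>i_1$. Thus the components $Y^b$ are independent of $x_1,\dots,x_{i_1}$ and the projected field $\bar Y=\sum_{b>i_1}Y^b\partial_b$ is a well-defined vector field on the leaf space with coordinates $x_{i_1+1},\dots,x_n$; the map $Y\mapsto\bar Y$ is a Lie algebra homomorphism with kernel exactly $\mathfrak{k}_1$, so its image is $\mathfrak{g}/\mathfrak{k}_1$, solvable of derived length $l-1$, of rank $r-i_1$, and regular by the preliminary remark. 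The induction hypothesis yields a local diffeomorphism in the variables $x_{i_1+1},\dots,x_n$ putting $\mathfrak{g}/\mathfrak{k}_1$ in canonical form with blocks $i_2,\dots,i_l$; pulling it back (keeping $x_1,\dots,x_{i_1}$ fixed) and composing with the first straightening produces the required $\Phi$ on a neighbourhood $U(x_0)$. Reading off the matrix $(Y_1|\cdots|Y_r)$ in these coordinates, the rows beyond the first block are exactly the canonical form of the $\bar Y$'s (whence $I_{i_2},\dots,I_{i_l}$ on the diagonal and the $G^h_k$ with $h\ge 2$), while the arbitrary first-block components $Y^a$ ($a\le i_1$) of the higher generators furnish the entries $G^1_2,\dots,G^1_l$; the base case $l=1$ is the simultaneous straightening of a regular abelian algebra.

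The main obstacle, and the step deserving the most care, is the descent to the quotient: one must verify that the projected fields are genuinely well defined (this is precisely where solvability, through the ideal property $[\mathfrak{g},\mathfrak{k}_1]\subseteq\mathfrak{k}_1$, is indispensable and distinguishes the argument from a bare Frobenius straightening of the involutive distribution $\Delta$), and one must keep all the flow-box charts and the leaf-space chart defined on one common neighbourhood of $x_0$, shrinking $U(x_0)$ as needed at each stage of the induction. The remaining verifications—that $\Phi_*(Y_1),\dots,\Phi_*(Y_r)$ generate $\Phi_*(\mathfrak{g})$ and have precisely the block-upper-triangular shape with identity diagonal blocks—are then routine bookkeeping.
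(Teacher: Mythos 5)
Your proof is correct, and it is worth saying first that the paper itself contains no argument for this statement: its ``proof'' is a pointer to \cite{DMU2}, Theorem 2.6. Your induction on the derived length --- rectify the last nonzero derived ideal $\mathfrak{k}_1=\mathfrak{g}^{(l-1)}$ by the flow-box theorem for commuting independent fields, use the ideal property $[\mathfrak{g},\mathfrak{k}_1]\subseteq\mathfrak{k}_1$ to push $\mathfrak{g}$ down to the local leaf space, then lift the canonical form of $\mathfrak{g}/\mathfrak{k}_1$ --- is precisely the argument encoded in the block sizes $i_1,\dots,i_l$ of Definition \ref{definition_solvable_coordinate} (the $j$-th block has size $\dim(\mathfrak{g}^{(l-j)}/\mathfrak{g}^{(l-j+1)})$), so you have supplied, along the natural lines, what the paper defers to the reference.

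Two points in your write-up are asserted rather than proved and should be made explicit. (i) Your preliminary remark reads the hypothesis ``constant rank $r$'' as ``$\dim_{\mathbb{R}}\mathfrak{g}=r$ and the evaluation map $Y\mapsto Y(x)$ is injective for every $x\in M$''. This is certainly the intended reading --- it is the situation of Corollary \ref{triangular_form}, where the algebra is spanned by $r$ pointwise independent fields, and the conclusion, which asks for $r$ generators of $\mathfrak{g}$, is incompatible with $\dim\mathfrak{g}>r$ under the natural reading of ``generators'' --- but it is an interpretation, not a consequence, of the hypothesis as written: for $\spann_{\mathbb{R}}\{\partial_x,\,x\partial_x\}$ on $\mathbb{R}$, which is solvable of constant rank $1$ and dimension $2$, your remark fails. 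State explicitly that you take $\dim\mathfrak{g}=r$. (ii) In the descent step, both the identification $\ker(Y\mapsto\bar Y)=\mathfrak{k}_1$ and the regularity of $\bar{\mathfrak{g}}=\mathfrak{g}/\mathfrak{k}_1$ on the leaf space rest on that pointwise injectivity, and the latter is not literally covered by your preliminary remark, which concerns subspaces of $\mathfrak{g}$, not a quotient algebra living on a different manifold. Both are one-line arguments you should insert: if $\bar Y=0$, then $Y(x_0)$ is a constant-coefficient combination of $Y_1(x_0),\dots,Y_{i_1}(x_0)$, and subtracting that combination gives an element of $\mathfrak{g}$ vanishing at $x_0$, hence zero, so $Y\in\mathfrak{k}_1$; and the rank of $\bar{\mathfrak{g}}$ at $\bar x$ equals $\dim\Delta_x-i_1=r-i_1=\dim\bar{\mathfrak{g}}$, because $\Delta_x$ contains $\spann_{\mathbb{R}}\{\partial_1,\dots,\partial_{i_1}\}$ and projects onto the span of the $\bar Y(\bar x)$. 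With these verifications inserted, the induction closes and the proof is complete.
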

\begin{proof}

The proof can be found in \cite{DMU2}, Theorem 2.6.
${}\hfill$
\end{proof}

\subsection{Reduction of SDEs via general symmetries}

In this section we introduce and discuss the definitions of reduced and triangular SDEs and we provide the main results for reduction of SDEs throughout general symmetries.

Let $(\mu,\sigma)$ be an SDE defined on $M\subset \mathbb{R}^n$. The SDE $(\mu,\sigma)$  can be \emph{reduced with respect to the coordinates $x^{r+1},...,x^n$} (where $r\in\{1,...,n-1\}$) if
\[ \mu^j(x)=\mu^j(x^{r+1},...,x^n), \quad \sigma^j_{\alpha}(x)=\sigma^{j}_{\alpha}(x^{r+1},...,x^n),\]
for any $j=r+1,...,n$ and $\alpha=1,...,m$.

\begin{definition}\label{definition_reduction}
If the SDE $(\mu,\sigma)$ defined on $M\subset \mathbb{R}^n$ can be reduced with respect to the coordinates $x^{r+1},...,x^n$, and we denote by $M'$ the open set obtained by projecting $M$ on the subspace generated by the coordinates $x^{r+1},...,x^n$, we can define a new SDE $(\mu',\sigma')$ on $M'$ such that
\[ \mu'^j(x)=\mu^{j+r}(x^{r+1},...,x^n), \quad \sigma'^j_{\alpha}(x)=\sigma^{j+r}_{\alpha}(x^{r+1},...,x^n)\]
where $j=1,...,n-r$. The SDE $(\mu',\sigma')$ on $M'$ is called \emph{the reduced SDE of $(\mu,\sigma)$ with respect to the coordinates $x^{r+1},...,x^n$}.

\end{definition}

\begin{remark}\label{remark_trinagular5}
If $(X,W)$ is a solution to an SDE which is reducible with respect to the variables $(x^{r+1},...,x^n)$, the process $(X',W)=((X^{r+1}_t,...,X^n_t),W)$ satisfies an SDE of the form
\[dX'^i_t=\mu^{i+r}(X^{1}_t,...,X^{n-r}_t)dt+\sum_{\alpha=1}^m\sigma^{i+r}_{\alpha}(X^{1}_t,...,X^{n-r}_t)dW^{\alpha}_t,
 \]
 for $i=1,...,n-r$. The process $(X',W)$ is called the reduced process.
\end{remark}

\begin{remark}\label{remarkreduction1}
If $Y_1,...,Y_r$ are a set of vector fields in canonical form such that $V_i=(Y_i, 0,0,0)$  are also strong symmetries of the  SDE $(\mu, \sigma)$, then $(\mu, \sigma)$ is reducible with respect to the  coordinates $x^{r+1},...,x^n$.
\end{remark}

\begin{definition}\label{definition_trinagular2}
An SDE $(\mu,\sigma)$ is triangular (with respect to the variables $(x^{r+1},...,x^n)$) if $(\mu,\sigma)$ is reducible with respect to $(x^{r+1},...,x^n)$, and furthermore for any $j\leq r$ and $\alpha=1,...,m$, the functions $\mu^j,\sigma^j_{\alpha}$ depend only on $(x^{j+1},...,x^n)$.
\end{definition}

\begin{remark}\label{remark_trinagular2}
If $(X,W)$ is a solution to an SDE which is triangular with respect to the variables $(x^{r+1},...,x^n)$, the process $X=(X^1_t,...,X^r_t)$ satisfies a triangular system of equations
\[dX^i_t=\mu^i(X^{i+1}_t,...,X^n_t)dt+\sum_{\alpha=1}^m\sigma^i_{\alpha}(X^{i+1}_t,...,X^n_t)dW^{\alpha}_t,
 \]
 for $i=1,...,r$. This means that the stochastic process $(X^1_t,...,X^r_t)$ can be recovered from the reduced process $(X',W)$ (defined in Remark \ref{remark_trinagular5}) using only Riemann and It\^o integrations and composition with smooth functions.
\end{remark}

\begin{remark}\label{remark reduction}
If $Y_1,...,Y_r$ are a set of vector fields in canonical form which are also strong symmetries of the  SDE $(\mu, \sigma)$, then, by Remark \ref{remarkreduction1}, $(\mu, \sigma)$ is in triangular form.
\end{remark}

\begin{theorem}
\label{theor:tha}
Let $K=\spann_{\mathbb{R}}\{V_1,\dots,V_k\}$ be a Lie algebra of general infinitesimal stochastic transformations  and let $x_0\in{M}$ be such that $Y_1(x_0),\dots,Y_k(x_0)$ are linearly independent, where $V_i=(Y_i,C_i,\tau_i,H_i)$. Then, there exist an open neighborhood $U$ of $x_0$ and a stochastic transformation of the form $T=(id_U,B,\eta,h)$ such that $T_*(V_1),\dots,T_*(V_k)$ are strong infinitesimal stochastic transformations.
Furthermore, the smooth functions $B$,$\eta$ and $h$ are solutions to the equations
\begin{align*}
Y_i(B)&=-B\cdot{C_i}, \notag \\
Y_i(\eta)&=-\tau_i\eta, \notag \\
Y_i(h)&=\Bigr(-\frac{\tau_i}{2}+C_i\Bigl)h-H_i, \notag
\end{align*}
for $i=1,\dots,k$.
\end{theorem}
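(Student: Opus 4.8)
The plan is to recognize that the three displayed systems are exactly the conditions forcing $T_*(V_i)$ to be strong, and then to solve these overdetermined first-order systems by a Frobenius (complete integrability) argument in which the hypothesis that $K$ is a Lie algebra supplies precisely the required integrability conditions.

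First I would exploit that $\Phi=\operatorname{id}_U$, so $\Phi_*$ is the identity and every composition with $\Phi^{-1}$ in the pushforward formula is trivial. Hence $T_*(V_i)=(Y_i,\ast,\ast,\ast)$, and demanding that the last three components vanish (so that $T_*(V_i)$ is strong) and cancelling the invertible factors $B$ and $1/\sqrt{\eta}$ produces exactly
\begin{align*}
Y_i(B) &= -B\cdot C_i, \\
Y_i(\eta) &= -\tau_i\eta, \\
Y_i(h) &= \Bigl(-\tfrac{\tau_i}{2}+C_i\Bigr)h - H_i,
\end{align*}
for $i=1,\dots,k$. It therefore suffices to produce a common local solution $(B,\eta,h)$ on a neighborhood $U$ of $x_0$, with $B\in SO(m)$ and $\eta>0$; I would normalize $B(x_0)=I$, $\eta(x_0)=1$, $h(x_0)=0$.

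Next I would recast solvability geometrically on the trivial bundle $\pi\colon M\times G\to M$ with $G=SO(m)\times\mathbb{R}_+\times\mathbb{R}^m$. Each $Y_i$ lifts to a vector field $\hat Y_i$ on $M\times G$ whose horizontal part is $Y_i$ and whose vertical part at $(x,(B,\eta,h))$ is $\bigl(-B\cdot C_i,\,-\tau_i\eta,\,(-\tfrac{\tau_i}{2}+C_i)h-H_i\bigr)$; each vertical component is tangent to its fibre, since $-B\cdot C_i\in T_B SO(m)$ because $C_i\in\mathfrak{so}(m)$, and the remaining two obviously preserve $\mathbb{R}_+$ and $\mathbb{R}^m$. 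A common solution of the three systems is precisely a section of $M\times G$ whose graph is an integral manifold of $\hat\Delta=\spann\{\hat Y_1,\dots,\hat Y_k\}$. Since $Y_1(x_0),\dots,Y_k(x_0)$ are independent, the $\hat Y_i$ project isomorphically onto them, so an integral manifold through $(x_0,e)$ is locally the graph of the desired $(B,\eta,h)$ over some $U$, with $\eta>0$ and $B\in SO(m)$ automatically preserved because $\hat\Delta$ is tangent to $M\times G$.

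The crux is the involutivity of $\hat\Delta$, which is exactly where the Lie algebra hypothesis $[V_i,V_j]=\sum_l c_{ij}^l V_l$ enters. I would compute $[\hat Y_i,\hat Y_j]$: its horizontal part is $[Y_i,Y_j]=\sum_l c_{ij}^l Y_l$, and its vertical parts are the integrability conditions of the three systems, which collapse to $Y_i(C_j)-Y_j(C_i)-\{C_i,C_j\}=\sum_l c_{ij}^l C_l$, to $Y_i(\tau_j)-Y_j(\tau_i)=\sum_l c_{ij}^l\tau_l$, and to the analogous fourth-component identity for the $H_i$; these are precisely the second, third and fourth components of $[V_i,V_j]=\sum_l c_{ij}^l V_l$, read off from the Lie bracket formula for infinitesimal transformations. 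The one delicate point I expect is the $h$-system: its integrability condition splits into an inhomogeneous part, which matches the fourth component of the bracket directly, and a part linear in $h$, and one must check that the latter is not an extra constraint but follows from the $B$- and $\eta$-conditions. Expanding $M_i=-\tfrac{\tau_i}{2}+C_i$ and using that the scalar $\tau_i$ commutes with matrices, so that $[M_i,M_j]=\{C_i,C_j\}$, this $h$-linear part reduces to the already-verified $C$- and $\tau$-identities; carrying out this cancellation cleanly is the main obstacle, after which $[\hat Y_i,\hat Y_j]=\sum_l c_{ij}^l\hat Y_l\in\hat\Delta$ and Frobenius delivers the integral manifold and hence the transformation $T$.
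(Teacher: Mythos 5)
Your first step coincides with the paper's: with $\Phi=id_U$ the push-forward conditions collapse exactly to the three displayed systems. Your Frobenius argument is then, in essence, a self-contained unpacking of what the paper delegates to a citation: the paper rewrites the systems via the operators $L_i=Y_i+R_{C_i}$, $N_i=Y_i+R_{\tau_i}$, $Q_i=Y_i-R_{-\tau_i/2+C_i}$, invokes \cite{DMU2} for the fact that closure of these operators under commutation (together with the compatibility condition $-Q_i(H_j)+Q_j(H_i)=\sum_k e^k_{i,j}H_k$ for the affine $h$-system) implies local solvability, and then verifies those algebraic identities from $[V_i,V_j]=\sum_k e^k_{i,j}V_k$. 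Your involutivity computation for $\hat\Delta$ --- in particular the observation that the $h$-linear part of $[\hat Y_i,\hat Y_j]$ reduces, via $\{M_i,M_j\}=\{C_i,C_j\}$ with $M_i=-\tfrac{\tau_i}{2}+C_i$, to the already-established $C$- and $\tau$-identities --- is the geometric counterpart of exactly those verifications and is correct.

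The gap is in your final step. The Frobenius theorem produces $k$-dimensional integral manifolds of $\hat\Delta$, whereas the graph of a section $x\mapsto(x,B(x),\eta(x),h(x))$ over an open set $U\subset M$ is $n$-dimensional; the two can coincide only when $k=n$. The theorem is stated, and is used in Corollary \ref{triangular_form} and in the whole reduction scheme, for $k\le n$, and when $k<n$ the leaf of $\hat\Delta$ through $(x_0,e)$ projects diffeomorphically onto a $k$-dimensional integral manifold of $\spann\{Y_1,\dots,Y_k\}$ inside $M$, not onto a neighborhood of $x_0$; so your construction determines $(B,\eta,h)$ only along that leaf, and your identification ``solution $=$ section whose graph is an integral manifold of $\hat\Delta$'' cannot hold as stated (what a solution's graph must satisfy is that its tangent spaces \emph{contain} $\hat\Delta$, i.e.\ it is an $n$-dimensional submanifold invariant under $\hat\Delta$, not a leaf). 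The standard repair: choose an $(n-k)$-dimensional submanifold $N\ni x_0$ transverse to $\spann\{Y_1(x_0),\dots,Y_k(x_0)\}$, impose the initial condition $(B,\eta,h)=(I,1,0)$ along $N$, and let $\Sigma$ be the union of the leaves of $\hat\Delta$ through $N\times\{e\}$. Since $T_{(x_0,e)}\Sigma=\hat\Delta_{(x_0,e)}\oplus T_{(x_0,e)}(N\times\{e\})$ projects isomorphically onto $T_{x_0}M$, the set $\Sigma$ is locally the graph of a smooth section over a neighborhood $U$ of $x_0$; being a union of leaves, it is tangent to $\hat\Delta$ at every point, which is precisely the statement that $(B,\eta,h)$ solves the three systems. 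With this insertion your argument is complete and gives a proof that is more self-contained than the paper's.
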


\begin{proof}
Given the transformation $T=(id_U,B,\eta,h)$, by definition of push forward  we have
\begin{align}
T_*(V_i)=&\Bigr(Y_i,Y_i(B)\cdot{B^{-1}}+B\cdot{C_i}\cdot{B^{-1}},\tau_i+Y_i(\eta)\eta^{-1}, \notag \\
&-\frac{B}{\sqrt{\eta}}\Bigr(-\frac{\tau_i}{2}+C_i\Bigl)h+\frac{B}{\sqrt{\eta}}\cdot{H_i}+\frac{B}{\sqrt{\eta}}Y_i(h)\Bigl).\notag
\end{align}
Therefore, $T_*(V_i)$ is a strong infinitesimal stochastic transformation if and only if
\begin{align}
Y_i(B)\cdot{B^{-1}}+B\cdot{C_i}\cdot{B^{-1}}&=0, \label{equat:eqc}\\
\tau_i+Y_i(\eta)\eta^{-1}&=0, \label{equat:eqd}\\
Y_i(h)+\frac 12 \tau_i h-C_i h+H_i&=0.\label{equat:eqe}
\end{align}

Denote by $L_i$, $N_i$ and $Q_i$ the linear operators on $Mat(m,m)$-valued, $\mathbb{R}_+$-valued and $\mathbb{R}^m$-valued smooth functions, respectively, such that
\begin{align*}
L_i(B)&=Y_i(B)+B\cdot{C_i}=(Y_i+R_{C_i})(B), \notag \\
N_i(\eta)&=Y_i(\eta)+\eta\tau_i=(Y_i+R_{\tau_i})(\eta), \notag \\
Q_i(h)&=Y_i(h)-\Bigr(-\frac{\tau_i}{2}+C_i\Bigl)h=(Y_i-R_{-\frac{\tau_i}{2}+C_i})(h), \notag
\end{align*}
where $R_{(\cdot)}$ is the operator of right multiplication.
Equations \eqref{equat:eqc}, \eqref{equat:eqd} and \eqref{equat:eqe} are respectively
\begin{align*}
L_i(B)&=0, \notag \\
N_i(\eta)&=0, \notag \\
Q_i(h)&=-H_i. \notag
\end{align*}

As proved in \cite{DMU2}, a sufficient condition for the existence of a non-trivial solution to equations \eqref{equat:eqc} and \eqref{equat:eqd} is that there exist some real constants $c_{i,j}^k$ and $d_{i,j}^k$ such that
\begin{align}
[L_i,L_j]&=\sum_kc^k_{i,j}L_k, \\
[N_i,N_j]&=\sum_kd^k_{i,j}N_k.
\end{align}
In order to solve the last equation (which is affine) we have to prove again that there exist some real constants $e_{i,j}^k$ such that
\begin{equation}
\label{equat:eqf}
[Q_i,Q_j]=\sum_ke^k_{i,j}Q_k
\end{equation}
and that the condition
\begin{equation}
\label{equat:eqg}
-Q_i(H_j)+Q_j(H_i)=\sum_ke^k_{i,j}H_k
\end{equation}
is satisfied.
Since $K$ is a Lie algebra,  there exist some constants $e_{i,j}^k$ such that
\begin{align}
[V_i,V_j]=&\Bigr([Y_i,Y_j],Y_i(C_j)-Y_j(C_i)-\{C_i,C_j\},Y_i(\tau_j)-Y_j(\tau_i),Y_i(H_j)-Y_j(H_i) \notag \\
&-\Bigr(-\frac{\tau_i}{2}+C_i\Bigl)\cdot{H_j}+\Bigr(-\frac{\tau_j}{2}+C_j\Bigl)\cdot{H_i}\Bigl)= \notag \\
\label{equat:eqh}
=&\Bigr(\sum_ke^k_{i,j}Y_k,\sum_ke^k_{i,j}C_k,\sum_ke^k_{i,j}\tau_k,\sum_ke^k_{i,j}H_k\Bigl)
\end{align}
Equations \eqref{equat:eqf} and \eqref{equat:eqg} are proven using equation \eqref{equat:eqh} and the definition of $Q_i$.
\end{proof}

\begin{corollary}\label{triangular_form}
Let $V_1=(Y_1,C_1,\eta_1, H_1),...,V_r=(Y_r,C_r,\eta_r, H_r) $ be a solvable Lie algebra of symmetries of the SDE $(\mu, \sigma)$ such that $Y_1,...,Y_r$ are regular vector fields. Then, for any $x_0 \in M$, there exist a neighborhood $U$ of $x_0$ and a stochastic transformation $T=(\Phi,B, \eta, h)$ such that $E_T(\mu, \sigma)$ is in triangular form.
\end{corollary}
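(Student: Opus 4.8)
The plan is to construct $T$ as a composition of two transformations. First a purely spatial transformation $T_0=(\Phi,I,1,0)$ straightens the vector fields $Y_1,\dots,Y_r$ into canonical form; then a fiber transformation $\tilde T=(\mathrm{id},B,\eta,h)$ (with identity diffeomorphism) kills the remaining components, turning the pushed-forward symmetries into strong ones. Once the symmetries of the transformed SDE are strong and their vector fields are in canonical form, triangularity follows immediately from Remark \ref{remark reduction}.

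First I would apply Theorem \ref{theorem_solvable_coordinate}. The map sending a general infinitesimal symmetry $V=(Y,C,\tau,H)$ to its vector field part $Y$ is a Lie algebra homomorphism, since the first slot of the bracket $[V_1,V_2]$ is exactly $[Y_1,Y_2]$; hence $\mathfrak{g}=\spann_{\R}\{Y_1,\dots,Y_r\}$ is the homomorphic image of the solvable Lie algebra $K=\spann_{\R}\{V_1,\dots,V_r\}$ and is therefore solvable. By regularity of the $Y_i$ the distribution $\mathfrak{g}$ has constant rank $r$, so Theorem \ref{theorem_solvable_coordinate} provides, after replacing the $V_i$ by suitable constant-coefficient linear combinations (which remain symmetries and still span $K$, the symmetry set being a vector space), a local diffeomorphism $\Phi:U(x_0)\to\tilde M$ such that $\Phi_*(Y_1),\dots,\Phi_*(Y_r)$ are in canonical form.

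Next I would pass to the transformed SDE $E_{T_0}(\mu,\sigma)$ with $T_0=(\Phi,I,1,0)$, using the covariance of symmetries: if $V$ generates the one-parameter group $S_a$ of symmetries of $(\mu,\sigma)$, then $T_0\circ S_a\circ T_0^{-1}$ is a one-parameter group of symmetries of $E_{T_0}(\mu,\sigma)$ (by Theorem \ref{theo:tha}, since $E_{T_0^{-1}}\circ E_{T_0}=\mathrm{id}$ and $S_a$ fixes $(\mu,\sigma)$), with infinitesimal generator the pushforward $(T_0)_*(V)$. Thus $(T_0)_*(V_1),\dots,(T_0)_*(V_r)$ are symmetries of $E_{T_0}(\mu,\sigma)$ spanning a solvable Lie algebra (conjugation is a group automorphism, so $(T_0)_*$ is a Lie algebra isomorphism and preserves solvability), with vector field parts $\Phi_*(Y_i)$ in canonical form, hence linearly independent at every point. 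Applying Theorem \ref{theor:tha} to this Lie algebra yields $\tilde T=(\mathrm{id},B,\eta,h)$ such that each $\tilde T_*((T_0)_*(V_i))$ is a strong infinitesimal stochastic transformation. Because the diffeomorphism part of $\tilde T$ is the identity, the vector field slot is unchanged, so these strong transformations are exactly $(\Phi_*(Y_i),0,0,0)$, still in canonical form, and by covariance again they are symmetries of $E_{\tilde T}(E_{T_0}(\mu,\sigma))$. Remark \ref{remark reduction} then shows this SDE is triangular.

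Finally I would set $T=\tilde T\circ T_0$. The composition formula \eqref{eq:composition} gives $T=(\Phi,B\circ\Phi,\eta\circ\Phi,h\circ\Phi)$, which is of the required form $(\Phi,B,\eta,h)$, while Theorem \ref{theo:tha} identifies $E_T(\mu,\sigma)=E_{\tilde T}(E_{T_0}(\mu,\sigma))$, just shown to be triangular. The main obstacle is the bookkeeping in the second step: ensuring the covariance property so that the strong \emph{transformations} produced by Theorem \ref{theor:tha} are genuine strong \emph{symmetries} of the correct transformed SDE, and verifying that the canonical form of the vector fields survives the fiber transformation, which it does precisely because $\tilde T$ leaves $M$ pointwise fixed. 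Everything else is a direct application of the structural results already established.
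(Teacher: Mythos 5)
Your proof is correct, and it uses exactly the same two building blocks as the paper --- Theorem \ref{theor:tha} to remove the rotation, time--change and Girsanov components, and Theorem \ref{theorem_solvable_coordinate} together with Remark \ref{remark reduction} to obtain canonical form and triangularity --- but you compose them in the opposite order. The paper writes $T=T_1\circ T_2$ with the fiber transformation $T_2=(id_M,B,\eta,h)$ applied \emph{first}: Theorem \ref{theor:tha} is applied to the original symmetries $V_i$ (linear independence of the $Y_i(x_0)$ coming from regularity), producing strong symmetries $(Y_i,0,0,0)$ of $E_{T_2}(\mu,\sigma)$, and only then is the spatial straightening $\Phi_1$ performed. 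You instead straighten the vector fields first with $T_0=(\Phi,I,1,0)$ and then apply Theorem \ref{theor:tha} to the pushed-forward symmetries $(T_0)_*(V_i)$ of $E_{T_0}(\mu,\sigma)$. The price of your ordering is the extra bookkeeping you correctly carry out: that $(T_0)_*$ preserves the Lie bracket and sends symmetries of $(\mu,\sigma)$ to symmetries of $E_{T_0}(\mu,\sigma)$ (the conjugation/covariance argument), and that the canonical form of the vector fields survives the subsequent fiber transformation, which it does precisely because $\tilde T$ has identity spatial part. The paper's ordering avoids these checks at its first step, though it needs the same covariance fact at its second step; on the other hand, the paper silently ignores that Theorem \ref{theorem_solvable_coordinate} may require replacing the given generators by constant-coefficient linear combinations, a point you handle explicitly and which is legitimate since the span $K$ is unchanged. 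Both orderings produce a transformation of the required form: yours gives $T=(\Phi,B\circ\Phi,\eta\circ\Phi,h\circ\Phi)$, the paper's gives $T=(\Phi_1,B,\eta,h)$, and in both cases Theorem \ref{theo:tha} identifies the transformed SDE with the triangular one.
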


\begin{proof}
We prove that there exists a $T=T_1 \circ T_2$, where $T_1=(\Phi_1,I,1,0)$ and $T_2=(id_M,B,\eta, h)$, satisfying the thesis of the theorem. Owing to  Theorem \ref{theor:tha}, the transformation $T_2$ can be chosen  such that $T_2^*(V_1)=(Y_1,0,0,0),...,T_2^*(V_r)=(Y_r,0,0,0)$. This means that $Y_1,...,Y_r$ form a regular solvable Lie algebra of strong symmetries of $E_{T_2}(\mu, \sigma)$. By Theorem \ref{theorem_solvable_coordinate} there exists a (locally defined) map $\Phi_1$ such that $\Phi_1^*(Y_1),...,\Phi^*_1(Y_r)$ are in canonical form and they are symmetries of $E_{T_1}(E_{T_2}(\mu, \sigma)$. By Remark \ref{remark reduction}, this implies that $E_{T_1}(E_{T_2}(\mu, \sigma))$ is in triangular form. Since $E_T(\mu, \sigma)=E_{T_1}(E_{T_2}(\mu, \sigma))$ the theorem is proved.
${}\hfill$
\end{proof}

Since quasi Doob symmetries play a crucial role in the reduction and reconstruction process for many interesting SDEs, in the following we prove the analogous of Theorem \ref{theor:tha} when considering only quasi Doob transformations.

\begin{theorem}\label{theoRaddrQD}
Let $K=span\{V_1,\dots,V_k\}$ be a Lie algebra of quasi Doob symmetries for an SDE $(\mu, \sigma)$ and let $x_0\in{M}$ be such that $Y_1(x_0),\dots,Y_k(x_0)$ are linearly independent, where $V_i=(Y_i,C_i,\tau_i,\sigma^T \cdot \nabla k_i)$. Then, there exist an open neighborhood $U$ of $x_0$ and a stochastic transformation of the form $T=(id_U,B,\eta,\sigma^T\cdot \nabla \bar k)$ such that $T_*(V_1),\dots,T_*(V_k)$ are strong infinitesimal stochastic symmetries for $(\mu, \sigma)$.

Furthermore the smooth functions $B$,$\eta$ and $k$ are solutions to the equations
\begin{align}
Y_i(B)&=-B\cdot{C_i}, \notag \\
Y_i(\eta)&=-\tau_i\eta, \notag \\
Y_i(\bar k)&=-k_i, \notag
\end{align}
for $i=1,\dots,k$.
\end{theorem}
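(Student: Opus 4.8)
The plan is to mimic the proof of Theorem \ref{theor:tha}, exploiting the quasi Doob ansatz $h=\sigma^T\cdot\nabla\bar k$ in order to collapse the vectorial equation for the fourth component into a single scalar equation. First I would record that, for a transformation of the form $T=(id_U,B,\eta,h)$, the pushforward $T_*(V_i)$ is strong precisely when the three equations \eqref{equat:eqc}, \eqref{equat:eqd} and \eqref{equat:eqe} of Theorem \ref{theor:tha} hold. The first two, $Y_i(B)=-B\cdot C_i$ and $Y_i(\eta)=-\tau_i\eta$, are literally the equations already solved in Theorem \ref{theor:tha}: the associated operators $L_i,N_i$ close under commutation because $K$ is a Lie algebra, so the same Frobenius-type argument produces $B$ and $\eta$ on a neighborhood $U$ of $x_0$. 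Thus the whole novelty lies in the fourth component.

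Next I would substitute $h=\sigma^T\cdot\nabla\bar k$ and $H_i=\sigma^T\cdot\nabla k_i$ into \eqref{equat:eqe}, which rewrites as $Y_i(h)=\left(-\frac{\tau_i}{2}+C_i\right)h-H_i$, and simplify the left-hand side by means of Lemma \ref{lemma:lea}. Writing $Y_i(\sigma^T\cdot\nabla\bar k)=Y_i(\sigma^T)\cdot\nabla\bar k+\sigma^T\cdot Y_i(\nabla\bar k)$ and using the elementary identity $Y_i(\nabla\bar k)=\nabla(Y_i(\bar k))-(\nabla Y_i)^T\cdot\nabla\bar k$ together with the second identity \eqref{eq_lemmared2} of Lemma \ref{lemma:lea} applied to $f=\bar k$, the two terms containing $(\nabla Y_i)^T$ cancel and I obtain
\[
Y_i(\sigma^T\cdot\nabla\bar k)=\left(-\frac{\tau_i}{2}+C_i\right)\sigma^T\cdot\nabla\bar k+\sigma^T\cdot\nabla(Y_i(\bar k)).
\]
Comparing this with \eqref{equat:eqe} shows that $T_*(V_i)$ is strong whenever $\sigma^T\cdot\nabla(Y_i(\bar k)+k_i)=0$, for which the scalar equation $Y_i(\bar k)=-k_i$ is a sufficient condition. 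Hence it remains only to solve this inhomogeneous scalar system.

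Finally I would establish the solvability of $Y_i(\bar k)=-k_i$ by the same affine Frobenius scheme used for the third equation in Theorem \ref{theor:tha}. The homogeneous operators are now simply the vector fields $Y_i$, which satisfy $[Y_i,Y_j]=\sum_l e^l_{ij}Y_l$ because $K$ is a Lie algebra (cf. \eqref{equat:eqh}); the affine compatibility condition reads $Y_i(k_j)-Y_j(k_i)=\sum_l e^l_{ij}k_l$. This last identity is the scalar content of Theorem \ref{TheoCommunatorQuasiDoob}: there the fourth component of $[V_i,V_j]$ is shown to equal $\sigma^T\cdot\nabla(Y_i(k_j)-Y_j(k_i))$, while that of $\sum_l e^l_{ij}V_l$ equals $\sigma^T\cdot\nabla(\sum_l e^l_{ij}k_l)$, and since $K$ is a Lie algebra of quasi Doob symmetries these potentials agree. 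A standard Frobenius argument, realizing the graph $\{(x,\bar k(x))\}$ as an integral manifold of the involutive distribution generated by the lifted fields $Y_i-k_i\partial_z$ on $U\times\mathbb{R}$, then yields $\bar k$ on a possibly smaller neighborhood of $x_0$, completing the construction of $T=(id_U,B,\eta,\sigma^T\cdot\nabla\bar k)$.

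The step I expect to be the main obstacle is precisely the scalar compatibility $Y_i(k_j)-Y_j(k_i)=\sum_l e^l_{ij}k_l$: the Lie algebra relation for $K$ only guarantees equality of the fourth components, i.e. the identity \emph{after} applying $\sigma^T\cdot\nabla$, so one must verify that it can be lifted to an exact identity between the scalar potentials. I would handle this by exploiting that each $k_i$ is determined by $H_i$ only up to the kernel of $\sigma^T\cdot\nabla$ (see \eqref{equa:equatc}) and normalizing the potentials so that the discrepancy is reabsorbed; alternatively, since strongness of $T_*(V_i)$ requires only the weaker relation $\sigma^T\cdot\nabla(Y_i(\bar k)+k_i)=0$, it is enough to solve the scalar system modulo that kernel, which is automatically compatible by the computation above.
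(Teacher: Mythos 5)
Your core computation coincides with the paper's own proof: the paper likewise reduces to the three equations of Theorem \ref{theor:tha}, obtains $B$ and $\eta$ exactly as there, substitutes $h=\sigma^T\cdot\nabla\bar k$ into \eqref{equat:eqe}, and uses \eqref{eq_lemmared2} of Lemma \ref{lemma:lea} together with $(\nabla Y_i)^T\cdot\nabla\bar k+Y_i(\nabla\bar k)=\nabla(Y_i(\bar k))$ to collapse the fourth-component condition to $\sigma^T\cdot\nabla\left(Y_i(\bar k)+k_i\right)=0$. The paper then ends with the single sentence ``Since $V_i$ form a Lie algebra, this concludes the proof'', i.e.\ it silently assumes the exact scalar compatibility $Y_i(k_j)-Y_j(k_i)=\sum_l e^l_{ij}k_l$ that the affine Frobenius argument for $Y_i(\bar k)=-k_i$ requires, whereas, as you correctly observe, the Lie algebra structure of $K$ (via Theorem \ref{TheoCommunatorQuasiDoob}) yields this identity only after applying $\sigma^T\cdot\nabla$, i.e.\ modulo $\ker(\sigma^T\cdot\nabla)$.

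The obstacle you flag is therefore not an artifact of your write-up but a genuine gap, shared by the paper's proof; and neither of your two proposed repairs closes it, because the obstruction can be cohomologically nontrivial. Concretely, take $M=\mathbb{R}^2$, $\mu=(cy,0)^T$ with $c\neq0$, $\sigma=I_2$ (the non explosive SDE $dX_t=cY_t\,dt+dW^1_t$, $dY_t=dW^2_t$). Then $V_1=(\partial_x,0,0,(0,0)^T)$ and $V_2=(\partial_y,0,0,(c,0)^T)$ are quasi Doob symmetries with potentials $k_1=0$, $k_2=cx$, spanning an abelian Lie algebra, and $Y_1,Y_2$ are everywhere independent. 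Since all $C_i,\tau_i$ vanish, strongness of $T_*(V_i)$ forces $B,\eta$ constant and $Y_i(h)=-H_i$, whose solutions are exactly $h=(-cy+a_1,a_2)^T$; none of these is a gradient ($\partial_y h^1-\partial_x h^2=-c$), so no transformation of the form $(id_U,B,\eta,\sigma^T\cdot\nabla\bar k)$ works, on any open set. In this example the discrepancy $Y_1(k_2)-Y_2(k_1)=c$ is a nonzero constant lying in $\ker(\sigma^T\cdot\nabla)$: shifting the potentials by kernel elements $g_i$ (here constants) changes it by $Y_1(g_2)-Y_2(g_1)-\sum_l e^l_{12}g_l=0$, so the discrepancy cannot be reabsorbed, and your weaker system $\sigma^T\cdot\nabla(Y_i(\bar k)+k_i)=0$, which here reads $\partial_x\bar k=a_1$, $\partial_y\bar k=a_2-cx$, is also unsolvable by equality of mixed partials, so ``solving modulo the kernel'' is not automatically compatible either. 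In summary, your proposal faithfully reproduces the paper's argument and is more honest about its weak point, but that final step cannot be completed in general: the statement needs the extra hypothesis that the potentials $k_i$ can be chosen so that $Y_i(k_j)-Y_j(k_i)=\sum_l e^l_{ij}k_l$ holds exactly (as happens in all the examples of Section \ref{section 5}), and under that hypothesis your Frobenius construction, and the paper's proof, go through.
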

\begin{proof}
The proof follows the same line of the proof of Theorem \ref{theor:tha}, except for equation \eqref{equat:eqe} that becomes
\begin{equation*}
Y_i(\sigma^T\cdot \nabla (\bar k))+\frac 12 \tau_i \sigma^T\cdot \nabla (\bar k)-C_i \cdot \sigma^T \cdot \nabla( \bar k )+\sigma^T\cdot \nabla (k_i)=0
\end{equation*}
Therefore, using Lemma \ref{lemma:lea} we get
\begin{align*}
& Y_i(\sigma^T)\cdot \nabla( \bar k)+\sigma^T\cdot Y_i(\nabla ( \bar k ))+\frac 12 \tau_i \sigma^T\cdot \nabla (\bar k)-C_i \cdot \sigma^T \cdot \nabla (\bar k )+\sigma^T\cdot \nabla ( k_i)=\\
&\sigma^T\left((\nabla Y_i)^T\cdot \nabla (\bar k)+Y_i(\nabla (\bar k)) +\nabla ( k_i) \right)=0
\end{align*}
Moreover, since
\begin{equation*}
(\nabla Y_i)^T\cdot \nabla (\bar k)+Y_i(\nabla (\bar k)) =\nabla (Y_i(\bar k)),
\end{equation*}
we can rewrite the previous equation as
\begin{equation*}
\sigma^T\nabla \left( Y_i(\bar k)+k_i\right)=0.
\end{equation*}
Since  $V_i$ form a Lie algebra, this concludes the proof.
\end{proof}

\subsection{Reconstruction of SDEs via general symmetries}

We recall the meaning of the reconstruction procedure of a process starting from a reduced one in a stochastic framework (\cite{DMU2}).

\begin{definition}\label{standard_reconstruction}
	Let $O$ and $O^\prime$ be two $(\Omega, \mathcal{F}, (\mathcal{F}_t)_{t \in [0,\mathcal{T}])}, \mathbb{P})$ processes on $M$ and $M^\prime$, respectively. We say that $O$ can be reconstructed from $O^\prime$ if there exists a smooth function $F: \mathbb{R}^{(m+1)}\times M^\prime \times M \rightarrow M$ and a stochastic absolutely continuous process $\alpha_t$ such that
	\begin{equation}
	O_t=F\left(\int_{0}^{\alpha_t}f_0(s,O^\prime_s)ds,\int_{0}^{\alpha_t}f_1(s,O^\prime_s)dW^1_s,\dots,\int_{0}^{\alpha_t}f_m(s,O^\prime_s)dW^m_s,O^\prime_{\alpha_t},O_0\right)
	\end{equation}
	where $W^1, \dots, W^m$ are $m$ Brownian motions and $f_i:\mathbb{R}\times M^\prime \rightarrow \mathbb{R}$ are smooth functions.
\end{definition}
\begin{remark}\label{triangular_integration}
Let $(X,W)$ be a solution to the symmetric SDE $(\mu,\sigma)$ admitting a solvable $r$-dimensional Lie algebra of infinitesimal symmetries. Then, by Corollary \ref{triangular_form}, there exists a (local) stochastic transformation $T$ transforming the SDE $(\mu,\sigma)$ into a new SDE $E_T(\mu,\sigma)$ of triangular form (see Definition \ref{definition_trinagular2} and Remark \ref{remark_trinagular2}). This means that the process $X'=P_T(X)=(X'^1,...,X'^n)$ can be reconstructed from the reduced process $O'=(X'^{r+1},...,X'^n)$ (which is the projection of $X'$ on the reduced space $\mathbb{R}^{n-r}$) since $(X',P_T(W))$ solves the triangular SDE $(\mu',\sigma')$. This implies that $X$ can be reconstructed from $O'$ too, since $X=P_{T^{-1}}(X')$ and the operations involved in the computation of $P_{T^{-1}}(X')$ are the ones used in Definition \ref{standard_reconstruction}. This property of symmetric SDEs is better explained in the examples discussed in Section \ref{section 5}. For a more detailed study of this problem see also \cite{AlDeMoUg2019,DMU2,DeUg2017}.
\end{remark}
On the other hand, when we consider reductions of SDEs  by means of general symmetries, including a random change of the underlying probability measure, there is no hope to recover the explicit form of the original process in terms of the reduced one in the sense of the previous definition. In this Section we discuss what we can reasonably obtain for the solution to the initial SDE throughout a Lie's symmetries analysis involving a measure change and we start by giving the following new definition of reconstruction.

\begin{definition}\label{definition_integrable}
	Let $O$ be a $(\Omega, \mathcal{F}, (\mathcal{F}_t)_{t \in [0,\mathcal{T}])} , \mathbb{P})$ process on $M$ and $O^\prime$ be a $(\Omega, \mathcal{F}, (\mathcal{F}_t)_{t \in [0,\mathcal{T}])} , \mathbb{Q})$ process on  $M^\prime$. We say that $O$ can be reconstructed from $O^\prime$ if there exist two smooth functions $F_i:\mathbb{R} \times \mathbb{R}^{(m+1)}\times M^\prime \times M \rightarrow M,(i=1,2$) and an adapted absolutely continuous process $\alpha_t$, such that, for any continuous and bounded function $g:M^k \rightarrow \mathbb{R}$ and for every choice of times $t_1,t_2,\dots,t_k\in [0,\mathcal{T}]$,  we have that
	\begin{equation}
\mathbb{E}_{\mathbb{P}}[g(O_{t_1}, O_{t_2},\dots,O_{t_k})]	=\mathbb{E}_{\mathbb{Q}}[g(F_1^{\alpha_{t_1}},F_1^{\alpha_{t_2}},\dots,F_1^{\alpha_{t_k}})\exp(	F_2^{\alpha_{t_{\max}}})]
	\end{equation}
	where
	\begin{equation}
	F_1^{t}:=F_1\left(t,\int_{0}^{t}f_0(s,O^\prime_s)ds,\int_{0}^{t}f_1(s,O^\prime_s)dW^1_s,\dots,\int_{0}^{t}f_m(s,O^\prime_s)dW^m_s,O^\prime_t,O_0\right),
	\end{equation}
	and
	\begin{multline}
	F_2^{t_{\max}}:=\\
	F_2\left(t,\int_{0}^{{t}_{\max}}f_0(s,O^\prime_s)ds,\int_{0}^{{t}_{\max}}f_1(s,O^\prime_s)dW^1_s,\dots,\int_{0}^{{t}_{\max}}f_m(s,O^\prime_s)dW^m_s,O^\prime_{{t}_{\max}},O_0\right),
	\end{multline}
	where ${t}_{\max}=\max(t_1,\dots,t_k)$,
	$W^1, \dots, W^m$ are $m$ Brownian motions and $f_k:\mathbb{R}\times M^\prime \rightarrow \mathbb{R}$  are smooth functions for any $k=0,1,\dots, m$.
\end{definition}

\begin{remark}
	The above definition says that the $\mathbb{P}$-finite dimensional distributions (f.d.d) of the process $O_t$ can be expressed in terms of (slightly modified) $\mathbb{Q}$- f.d.d. of the process $O^\prime_t$. The modified probability law is absolutely continuous with respect to $\mathbb{Q}$ with a Radon-Nikodim derivative which is Markovian with respect to the natural filtration generated by the process $O'_t$.
\end{remark}

The reconstruction result given in the following theorem holds for general stochastic symmetries, i.e. takes into account  all possible kinds of random transformations, including random time changes. Anyway, in the examples of Section \ref{section 5} we privilege general transformations without time changes, that we call Girsanov transformations.

\begin{theorem}[Reconstruction theorem]\label{reconstruction}
	Let $(X,W)$ be a weak  solution to an SDE $(\mu,\sigma)$ and let $(X^\prime,W^\prime)$ be a weak solution to the reduced SDE, obtained by Corollary \ref{triangular_form}. Then $(X,W)$ can be reconstructed from $(O^\prime,W^{\prime})$, where $O^\prime=(X^{\prime^{2}},\dots,X^{\prime^{n}})$. In particular, for any continuous and bounded function $g:M \rightarrow \mathbb{R}$, we have
\begin{align}
		\mathbb{E}_{\mathbb{P}}[g(X_t)]:=&\mathbb{E}_{\mathbb{Q}}\left[g\left (F^t_1\left ( \int_{0}^{t}f_0(s,O^\prime_s)ds,\int_{0}^{t}f_1(s,O^\prime_s)dW^{{\prime}^{1}}_s,\right.\right.\right.\nonumber\\
		&\left.\left.\dots,\int_{0}^{t}f_m(s,O^\prime_s)dW^{{\prime}^{m}}_s,O^\prime_t,X_0 \right )\right ) \exp\left( F^t_2\left(\int_{0}^{t}f_0(s,O^\prime_s)ds,\right.\right.\nonumber\\
		&\left.\left.\left.\int_{0}^{t}f_1(s,O^\prime_s)dW^{{\prime}^{1}}_s,\dots,\int_{0}^{t}f_m(s,O^\prime_s)dW^{{\prime}^{m}}_s,O^\prime_t,X_0 \right)\right)\right].
\end{align}
\end{theorem}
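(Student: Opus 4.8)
The plan is to reduce the reconstruction statement to the already-established machinery: the triangularization from Corollary \ref{triangular_form}, the composition law for process transformations (Theorem \ref{theo:tha}), and the explicit form of the Girsanov density in Definition \ref{defi:dea}. The key conceptual point is that, unlike the measure-preserving case of \cite{DMU2}, the original process $(X,W)$ and the transformed process $P_T(X,W)$ live on different probability spaces — the latter on $(\Omega,\mathcal{F},\mathbb{Q})$ — so a pathwise reconstruction is impossible and we must settle for matching expectations, absorbing the Radon--Nikodym factor $d\mathbb{Q}/d\mathbb{P}$ into the $\exp(F_2)$ term of Definition \ref{definition_integrable}.

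\textbf{Step 1 (triangularization).} First I would invoke Corollary \ref{triangular_form}: given the solvable Lie algebra of symmetries, there is a local stochastic transformation $T=(\Phi,B,\eta,h)$ such that $E_T(\mu,\sigma)$ is in triangular form. By Theorem \ref{theo:thea}, the transformed process $X'=P_T(X)$ together with $W'=P_T(W)$ solves this triangular SDE on $(\Omega,\mathcal{F},\mathbb{Q})$. Because the SDE is triangular with respect to the first coordinate, Remark \ref{remark_trinagular2} guarantees that $X'=(X'^1,\dots,X'^n)$ is recoverable from the reduced process $O'=(X'^2,\dots,X'^n)$ using only composition with smooth functions together with Riemann and It\^o integrations against $W'$ — precisely the operations encoded in the functional $F_1^t$.

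\textbf{Step 2 (inverting the spatial/gauge part and handling the measure).} Next I would express $X$ in terms of $X'$ via $X=P_{T^{-1}}(X')$. Using the explicit inverse \eqref{inversetransformation} and the composition law of Theorem \ref{theo:tha}, the recovery of $X$ from $X'$ is again built from smooth functions and stochastic integrals, so it folds into the smooth reconstruction map. The novelty is the change of measure: for a bounded continuous $g$, I would write
\begin{equation*}
\mathbb{E}_{\mathbb{P}}[g(X_t)]=\mathbb{E}_{\mathbb{P}}\big[g(X_t)\big]=\mathbb{E}_{\mathbb{Q}}\left[g(X_t)\,\frac{d\mathbb{P}}{d\mathbb{Q}}\right],
\end{equation*}
and then substitute $X_t=\Psi(X')$ for the reconstruction functional $\Psi$ obtained in Step 1. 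The Radon--Nikodym derivative $d\mathbb{P}/d\mathbb{Q}=(d\mathbb{Q}/d\mathbb{P})^{-1}$ is, by \eqref{eq:definitionQ}, the exponential of It\^o and Riemann integrals of Markovian functions of the process; rewriting these integrals in terms of $O'$ and $W'$ (again using the triangular structure and It\^o's formula) yields exactly the factor $\exp(F_2^t)$, with the integrands supplying the smooth functions $f_0,\dots,f_m$. The time change $\alpha_t$ from \eqref{alpha} accounts for the random rescaling component $\eta$.

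\textbf{The main obstacle} I expect is bookkeeping rather than a deep difficulty: one must verify that every integrand arising from $d\mathbb{P}/d\mathbb{Q}$ and from inverting $T$ is genuinely a function of the reduced process $O'$ and the driving Brownian motions $W'$ alone — not of the suppressed first coordinate in a way that breaks the functional form demanded by Definition \ref{definition_integrable}. This is where the triangular (as opposed to merely reduced) structure is essential, and where the careful ordering of the four elementary transformations in Definition \ref{defi:dea} must be tracked through the composition and inversion formulas \eqref{eq:composition} and \eqref{inversetransformation}. Once this is checked, identifying $F_1$ with the reconstruction of $X$ from $X'$ and $F_2$ with the logarithm of the density completes the proof.
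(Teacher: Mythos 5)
Your proposal follows essentially the same route as the paper's proof: triangularization via Corollary \ref{triangular_form}, writing $X=P_{T^{-1}}(X')$, performing the change of measure $\mathbb{E}_{\mathbb{P}}[g(X_t)]=\mathbb{E}_{\mathbb{Q}}\bigl[g(P_{T^{-1}}(X'_t))\,\tfrac{d\mathbb{P}}{d\mathbb{Q}}\big|_{\mathcal{F}_t}\bigr]$, rewriting the Radon--Nikodym density in terms of the reduced process and the $\mathbb{Q}$-Brownian motion (with the time change $\alpha_t$ handling the $\eta$ component), and invoking the triangular structure for integrability. Your closing remark about verifying that all integrands depend only on the reduced data is exactly the point the paper settles by substituting $dW_t=\tfrac{1}{\sqrt{\eta(X_t)}}B^{-1}(X_t)d\tilde{W}_t+h(X_t)dt$ into the density, so the two arguments coincide.
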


\begin{proof}
The two solution processes $(X,W)$ and $(X^\prime,W^\prime)$ are related by the transformation $T$ given in Corollary \ref{triangular_form}, which reduces the original SDE to an SDE in triangular form. Since the action of the stochastic transformation $T$ on the process is given in Definition \ref{defi:dea}, using  the same notations, we can write, $\forall t \in [0,\mathcal{T}]$,
\begin{equation}
\mathbb{E}_{\mathbb{P}}[g(X_t)]=\mathbb{E}_{\mathbb{P}}[g(P_{T^{-1}}(X^\prime_t))]=
\mathbb{E}_{\mathbb{Q}}\left[g(P_{T^{-1}}(X^\prime_t)) \frac{d\mathbb{P}}{d\mathbb{Q}}\Bigg|_{\mathcal{F}_{t}}\right]
\end{equation}	
where a measure change in the expectation has been performed, $T^{-1}$ is the inverse transformation given in \eqref{inversetransformation}
and
\begin{equation}
	\frac{d\mathbb{P}}{d\mathbb{Q}}\Bigg|_{\mathcal{F}_{t}} =  \exp \left( -\int_0^t h^T(X_s)dW_s+\frac{1}{2}\int_0^t h^Th(X_s)ds \right).
\end{equation}
In order to express the integrands in the expectation with respect to $\mathbb{Q}$ in terms of the reduced process $(X^\prime,W^\prime)$, by Definition \ref{defi:dea} we get
\[
dW_t=\frac{1}{\sqrt{\eta(X_t)}}B^{-1}(X_t)d\tilde{W}_t+h(X_t)dt
\]
so that
\begin{multline*}
	\frac{d\mathbb{P}}{d\mathbb{Q}}\Bigg|_{\mathcal{F}_{\alpha_t}} =  \exp \left( -\int_0^{\alpha_t} h^T(P_{T^{-1}}(X^\prime_s))\frac{1}{\sqrt{\eta(P_{T^{-1}}(X^\prime_s))}}B^{-1}(P_{T^{-1}}(X^\prime_s))d\tilde{W}_s\right)\times\\
	\times \exp \left(-\frac{1}{2}\int_0^{\alpha_t} (h(P_{T^{-1}}(X^\prime_s)))^2ds \right),
\end{multline*}
where $\alpha_t$ is the inverse random time change as introduced in \eqref{alpha} and $W^\prime_t=H_{\eta}(\tilde{W}_t)$ is a $\mathbb{Q}-$ Brownian motion.
Since the reduced process $(X^\prime,W^\prime)$ is by construction of triangular form it can be integrated (see Definition \ref{remark_trinagular2}, Remark \ref{remark_trinagular2} and Remark \ref{triangular_integration}).
\end{proof}

\begin{remark}
In Theorem \ref{reconstruction}, for sake of simplicity,  we consider the case of functions $g(O_t)$ depending only on the process at time $t\in [0,\mathcal{T}]$, but the proof can be easily generalized to the case of continuous bounded functions $g:M^k \rightarrow \mathbb{R}$  taking the expectations of the form $g(O_{t_1},\cdots,O_{t_k})$ (for any $t_1,...,t_k\in [0,\mathcal{T}]$) as required by Definition \ref{definition_integrable}.
\end{remark}

\section{Examples}\label{section 5}

In order to include in our setting also time dependent transformations, in the following examples we add to the original SDE a further component admitting solution $Z_t=t-t_0$.

\subsection{Bessel process}
Let us consider the SDE associated with the well-known Bessel equation
\[
\begin{pmatrix}
dX_t \\
dZ_t
\end{pmatrix}
=
\begin{pmatrix}
\frac {a}{X_t} \\
1
\end{pmatrix}
dt+
\begin{pmatrix}
1 \\
0
\end{pmatrix}
dW_t,
\]
where $W_t$ is a one dimensional Brownian motion and $a\not= 0$ is a real constant (for a recent review on one-dimensional Bessel process see e.g. \cite{Carr,Lawler}).
If we consider the Lie algebra generated by the two standard symmetries  with random time change (but without the change of the reference measure)
\[
V_1=\left(
\begin{pmatrix}
0 \\
1
\end{pmatrix}
,0,0,0\right)
\]

\[
V_2=\Biggr(
\begin{pmatrix}
\frac{x}{2}\\
z
\end{pmatrix}
,0,1,0\Biggl),
\]
we can look for the stochastic transformation  $T=(\Phi, B, \eta,0 )$ transforming $V_1$ and $V_2$ in strong symmetries in canonical form. In particular, from Theorem \ref{theor:tha}, solving $Y_i(B)=-BC_i$ ($i=1,2$), i.e.
\[ B_z=0, \quad \frac 12 xB_x+zB_z=0\]
we find that $B$ has to be constant and solving $Y_i(\eta)=-\tau_i\eta$ ($i=1,2$), i.e.
\[ \eta_z=0, \quad \frac 12 x\eta_x+z\eta_z=-\eta\]
we get $\eta =\frac {C}{x^2}$, where $C\in \mathbb{R}$.
Moreover, in order to put $Y_1$ and $Y_2$ in canonical form, we consider the flows $\Phi^i_{a_i}$ of $Y_i$ ($i=1,2$) given by
\[
\Phi_{a_1}^1(x,z)=
\begin{pmatrix}
x\\
z+a_1
\end{pmatrix}
\]
\[
\Phi_{a_2}^2(x,z)=
\begin{pmatrix}
xe^{\frac 12 a_2} \\
ze^{a_2}
\end{pmatrix}.
\]
Hence, if we consider the point $p=(1,1)^T$ and the functions $F:\mathbb{R}^2\to M$ given by
\[
F(a_1,a_2)= \Phi_{a_1}^1(\Phi_{a_2}^2(p))= \begin{pmatrix}
e^{\frac 12 a_2}\\
e^{a_2}+a_1
\end{pmatrix}
\]
the function $\Phi$ can be obtained as the inverse of $F$ and is given by
\[
\Phi(x,z)=
\begin{pmatrix}
z-x^2\\
2\log(x)
\end{pmatrix}.
\]
Applying the previous stochastic transformation (with $B=1$, $\eta=\frac{1}{x^2}$) we can reduce Bessel SDE, and we get the following integrable SDE
\[
\begin{pmatrix}
dX^\prime_t \\
dZ^\prime_t
\end{pmatrix}
=
\begin{pmatrix}
-2a\exp(Z^\prime_t) \\
(2a-1)
\end{pmatrix}
dt+
\begin{pmatrix}
- 2\exp(Z^\prime_t)\\
2
\end{pmatrix}
dW^\prime_t.
\]
Indeed, since it is in triangular form, we obtain
\[
Z^\prime_t=Z^\prime_0+(2a-1)t+2W^\prime_t
\]
\[
X^\prime_t=Z^\prime_0-\int_0^t{[2a\exp(Z^\prime_s)ds+2\exp(Z^\prime_s)dW^\prime_s]}
\]
In this case, since there is no change of the measure, we can reconstruct the original process starting from the reduced one following the procedure illustrated in \cite{DMU2}. We have to apply the following inverse transformation:
$$T^{-1}=(\Phi^{-1}, 1, (\eta\cdot\Phi^{-1})^{-1},0) $$
Recalling that the action of the stochastic transformation $T^{-1} $ on the process $X^\prime$ is given by
\[
P_{T^{-1}}(X^\prime)=\Phi^{-1}(H_\alpha(X^\prime))
\]
we find
\[
X_t=\exp(Z^\prime_{{\alpha_t}}).
\]
This formula shows that a Bessel process  can be seen as a time changed geometric Brownian motion. So we recover the well-known Lamperti's Theorem which relates the Bessel process with a time changed geometric Brownian motion (see, e.g. \cite{Carr}) by using a Lie's symmetry approach.\\

The time change
$$\alpha_t=\int_{0}^{t}X^2_sds=\int_{0}^{t}\exp(Z^\prime_{{0}}+(2a-1)s+2W^\prime_s)ds$$
is then the Yor's process, which is very useful in financial applications (see \cite{Carr}). \\

On the other hand, if we look for quasi Doob symmetries for the previous SDE, we find
\[
V_1=\Biggr(
\begin{pmatrix}
-1 \\
0
\end{pmatrix}
,0,0,\frac {a}{x^2}\Biggl)
\]

\[
V_2=\Biggr(
\begin{pmatrix}
0\\
1
\end{pmatrix}
,0,0,0\Biggl).
\]

Since $V_1$ and $V_2$ form an abelian solvable Lie algebra, we can apply Theorem \ref{theoRaddrQD} in order to find  the quasi Doob stochastic transformation $T=(\Phi, B, \eta, h)$  transforming Bessel SDE in a new SDE in triangular form admitting only strong symmetries. In particular, in order to find $\Phi$, we start by computing
 the flows of $Y_1$ and $Y_2$:
\[
\Phi_{a_1}^1(x,z)=
\begin{pmatrix}
x-a_1\\
z
\end{pmatrix}
\]
\[
\Phi_{a_2}^2(x,z)=
\begin{pmatrix}
x \\
z+a_2
\end{pmatrix}.
\]
If we consider the point $p=(0,0)^T$ and the function $F\colon\mathbb{R}^2\to M$ given by
\[
F(a_1,a_2)=\Phi_{a_1}^1(\Phi_{a_2}^2(p))=
\begin{pmatrix}
- a_1  \\
a_2
\end{pmatrix},
\]
the function $\Phi\colon M\to\mathbb{R}^2$, which is the inverse of $F$, is given by
\[
\Phi(x,z)=
\begin{pmatrix}
-x\\
z
\end{pmatrix}.
\]
Now, it easy to check that
\[
\Phi_*(Y_1)=\nabla(\Phi)\cdot Y_1=
\begin{pmatrix}
1 \\
0
\end{pmatrix}
=:Y'_1,
\]
\[
\Phi_*(Y_2)=\nabla(\Phi)\cdot Y_2=
\begin{pmatrix}
2k\sqrt{x}e^{kz} \\
1
\end{pmatrix}
=:Y'_2.
\]
Next, by Theorem \ref{theor:tha}, the other components of $T$ are $B=1$, $\eta=1$
since $C_i$ and $\tau_i$ vanishes.
Moreover, the equations for $h$ are
\begin{align}
h_x&=\frac {a}{x^2}, \notag \\
h_z&=0. \notag
\end{align}
Taking  $h=-\frac ax$ we can compute
the transformed SDE $(\mu',\sigma'):=E_T(\mu,\sigma)$ as
\[
\mu'=\Biggr(\frac{1}{\eta}[L(\Phi)+\nabla(\Phi)\cdot\sigma\cdot h]\Biggl)\circ\Phi^{-1}=
\begin{pmatrix}
0 \\
1
\end{pmatrix}
\]
\[
\sigma'=\Biggr(\frac{1}{\sqrt{\eta}}[\nabla(\Phi)\cdot\sigma\cdot B^{-1}]\Biggl)\circ\Phi^{-1}=
\begin{pmatrix}
-1 \\
0
\end{pmatrix}
\]

The reduced SDE takes the very simple form
\[
\begin{pmatrix}
dX^\prime_t \\
dZ^\prime_t
\end{pmatrix}
=
\begin{pmatrix}
0 \\
1
\end{pmatrix}
dt+
\begin{pmatrix}
-1 \\
0
\end{pmatrix}
dW^\prime_t,
\]

Since the function $\Phi\colon M\to\mathbb{R}^2$ is given by
\[
\Phi^{-1}(x,z)=
\begin{pmatrix}
-x^\prime\\
z^\prime
\end{pmatrix}
\]
and there is no time change (i.e. $\eta=1$), following the proof of Theorem \ref{reconstruction} we obtain

\begin{equation}
	\mathbb{E}_{\mathbb{P}}[g(X_t)]=\mathbb{E}_{\mathbb{Q}}\left[g(\Phi^{-1}(X^\prime_t))	\frac{d\mathbb{P}}{d\mathbb{Q}}\Bigg|_{\mathcal{F}_{t}} \right] 	
\end{equation}
with
\begin{equation}
	\frac{d\mathbb{P}}{d\mathbb{Q}}\Bigg|_{\mathcal{F}_{t}} =  \exp \left( -\int_0^t h(X_s)dW_s+\frac{1}{2}\int_0^t (h(X_s))^2ds \right).
\end{equation}
Substituting $dW_t=dW^\prime_t+h({X_t})dt$ we have
\begin{equation}\label{RN_derivative}
\frac{d\mathbb{P}}{d\mathbb{Q}}\Bigg|_{\mathcal{F}_{t}} =  \exp \left( -\int_0^t h(X_s)dW^\prime_s-\frac{1}{2}\int_0^t (h(X_s))^2ds \right),
\end{equation}
and, by definition of $h$,
\begin{equation}
	\frac{d\mathbb{P}}{d\mathbb{Q}}\Bigg|_{\mathcal{F}_{t}} =  \exp \left( -\int_0^t \frac{a}{X^\prime_s}dW^\prime_s-\frac{1}{2}\int_0^t \frac{a^2}{(X^\prime_s)^2}ds \right).
\end{equation}
Since the reduced SDE is integrable
\begin{equation}
\frac{d\mathbb{P}}{d\mathbb{Q}}\Bigg|_{\mathcal{F}_{t}} =  \exp \left( -\int_0^t \frac{a}{X^\prime_0-W^\prime_s}dW^\prime_s-\frac{1}{2}\int_0^t \frac{a^2}{(X^\prime_0-W^\prime_s)^2}ds \right).
\end{equation}
Finally by assuming $X_0=X^\prime_0$ we get
\begin{multline}
\mathbb{E}_{\mathbb{P}}[g(X_t)]=\\
\mathbb{E}_{\mathbb{Q}}\left[g(-X_0+W^\prime_t)  \exp \left( -\int_0^t \frac{a}{(X_0-W^\prime_s)}dW^\prime_s-\frac{1}{2}\int_0^t \frac{a^2}{(X_0-W^\prime_s)^2}ds \right)\right]. 	
\end{multline}
We remark that $W^\prime_t$ is a $\mathbb{Q}-$ Brownian motion. This first example shows clearly in which sense we can reconstruct our original Bessel process starting from the reduced SDE which is simply a Brownian motion starting from $-X_0$. The $\mathbb{P}$-f.d.d. of the Bessel process admit a representation in terms of those of a Brownian motion starting by $-X_0$ through a functional similar to Feynman-Kac formula. Moreover, it is possible to explicitely compute this functional, depending essentially on a Brownian motion. 

\subsection{CIR model}\label{section_CIR}
Let us consider the Cox-Ingersoll-Ross (CIR) model
\[
\begin{pmatrix}
dX_t \\
dZ_t
\end{pmatrix}
=
\begin{pmatrix}
aX_t+b \\
1
\end{pmatrix}
dt+
\begin{pmatrix}
\sigma_0\sqrt{X_t} \\
0
\end{pmatrix}
dW_t,
\]
where $W_t$ is a one dimensional Brownian motion and $a$, $b$, $\sigma_0$ are constants, which is widely used in mathematical finance to describe the behavior of the interest rates (see \cite{Brigo2006} Chapter 3). An analysis of Lie's point symmetries of the Kolmogorov equation associated with CIR model is presented in \cite{Craddock2004}.
We can compute the following one parameter family of quasi Doob infinitesimal symmetries of this model ($k\in \mathbb{R}$)
\[
V_1=\Biggr(
\begin{pmatrix}
e^{-kz}\sqrt{x} \\
0
\end{pmatrix}
,0,0,e^{-kz}\left[ \frac{a+2k}{2\sigma_0}+\frac 1x \left( \frac {\sigma_0^2-4b}{8\sigma_0}\right)\right]\Biggl)
\]

\[
V_2=\Biggr(
\begin{pmatrix}
0\\
1
\end{pmatrix}
,0,0,0\Biggl).
\]

Since $V_1$ and $V_2$ form a solvable Lie algebra of quasi Doob symmetries for the SDE, we can apply Theorem \ref{theoRaddrQD} in order to find the quasi Doob stochastic transformation $T=(\Phi, B, \eta, h)$  transforming the CIR model in a new SDE in triangular form. In particular we compute a stochastic transformation $T=(\Phi,B,\eta,h)$ transforming $V_1$ and $V_2$ into strong symmetries $V'_1$ and $V'_2$ for the transformed SDE $E_T(\mu,\sigma)$ such that the vector fields $Y'_1$ and $Y'_2$ are in canonical form. As in the previous example, in order to find $\Phi$, we  compute the flows of $Y_1$ and $Y_2$
\[
\Phi_{a_1}^1(x,z)=
\begin{pmatrix}
(x+\frac 12 a_1e^{-kz})^2 \\
z
\end{pmatrix}
\]
\[
\Phi_{a_2}^2(x,z)=
\begin{pmatrix}
x \\
z+a_2
\end{pmatrix}
\]
and we consider the point $p=(0,0)^T$ and the function $F\colon\mathbb{R}^2\to M$ given by
\[
F(a_1,a_2)=\Phi_{a_1}^1(\Phi_{a_2}^2(p))=
\begin{pmatrix}
\left( \frac 12 a_1 e^{-ka_2}\right)^2 \\
a_2
\end{pmatrix}.
\]
The function $\Phi\colon M\to\mathbb{R}^2$, which is the inverse of $F$, is given by
\[
\Phi(x,z)=
\begin{pmatrix}
2\sqrt{x}e^{kz} \\
z
\end{pmatrix}.
\]
Now, if we compute $\Phi_*(Y_1)$ and $\Phi_*(Y_2)$, we get
\[
\Phi_*(Y_1)=\nabla(\Phi)\cdot Y_1=
\begin{pmatrix}
\frac{e^{kz}}{\sqrt{x}} & 2k\sqrt{x} e^{kz} \\
0 & 1
\end{pmatrix}
\cdot
\begin{pmatrix}
e^{-kz}\sqrt{x} \\
0
\end{pmatrix}
=
\begin{pmatrix}
1 \\
0
\end{pmatrix}
=:Y'_1,
\]
\[
\Phi_*(Y_2)=\nabla(\Phi)\cdot Y_2=
\begin{pmatrix}
\frac{e^{kz}}{\sqrt{x}} & 2k\sqrt{x} e^{kz} \\
0 & 1
\end{pmatrix}
\cdot
\begin{pmatrix}
0\\
1
\end{pmatrix}
=
\begin{pmatrix}
2k\sqrt{x}e^{kz} \\
1
\end{pmatrix}
=
\begin{pmatrix}
kX' \\
1
\end{pmatrix}
=:Y'_2,
\]
that are in canonical form.

Next, by Theorem \ref{theor:tha}, the second and the third components of $T$ are $B=1$ and $\eta=1$
since $C_i$ and $\tau_i$ are zero.
Remember that in this example $B$ is not a matrix-valued function, because the Brownian motion has dimension one.

Moreover, the equations for $h$ are
\begin{align}
Y_1(h)&=-e^{-kz} \left[ \frac{a+2k}{2\sigma_0}+\frac 1x \left( \frac {\sigma_0^2-4b}{8\sigma_0}\right)\right], \notag \\
Y_2(h)&=0, \notag
\end{align}
then
\begin{align}
h_x&=-\frac{a+2k}{2\sigma_0\sqrt x}- \frac {\sigma_0^2-4b}{8\sigma_0x\sqrt x}, \notag \\
h_z&=0. \notag
\end{align}

We obtain $h(x,z)=-\frac{a+2k}{\sigma_0}\sqrt x+ \frac {\sigma_0^2-4b}{4\sigma_0\sqrt x}+C$, and we can compute
the transformed SDE $(\mu',\sigma'):=E_T(\mu,\sigma)$ as
\[
\mu'=\Biggr(\frac{1}{\eta}[L(\Phi)+\nabla(\Phi)\cdot\sigma\cdot h]\Biggl)\circ\Phi^{-1}=
\begin{pmatrix}
0 \\
1
\end{pmatrix}
\]
\[
\sigma'=\Biggr(\frac{1}{\sqrt{\eta}}[\nabla(\Phi)\cdot\sigma\cdot B^{-1}]\Biggl)\circ\Phi^{-1}=
\begin{pmatrix}
\sigma_0e^{kZ'_t} \\
0
\end{pmatrix}
\]
Since $\sqrt{X_t}=\frac{X^\prime_t}{2}\exp(-kt)$ and using \eqref{RN_derivative}
the Radon-Nikodym derivative up to time $t$ of the measure $\mathbb{P}$ with respect to $\mathbb{Q}$ becomes

\begin{align}
\frac{d\mathbb{P}}{d\mathbb{Q}}\Bigg|_{\mathcal{F}_{t}} =  \exp \left( - \frac{a+2k}{2\sigma_0}\int_0^t {X^\prime_s}\exp(-ks)dW^\prime_s+\frac {(\sigma_0^2-4b)}{2\sigma_0}\int_0^t \frac{\exp(ks)}{X^\prime_s}dW^\prime_s \right )\times\nonumber\\
\times \exp \left(-\frac{1}{2}\int_0^t\left[-\frac{a+2k}{2\sigma_0} {X^\prime_s}\exp(-ks)
+\frac {(\sigma_0^2-4b)}{2\sigma_0} \frac{\exp(ks)}{X^\prime_s}\right]^2ds \right).
\end{align}
The reduced SDE is integrable, that is $X^\prime_s=X_0+\sigma_0 \int_{0}^{s}\exp (k\tau) dW^\prime_\tau$, so we get
\begin{align}
	\frac{d\mathbb{P}}{d\mathbb{Q}}\Bigg|_{\mathcal{F}_{t}} = & \exp \left( - \frac{a+2k}{2\sigma_0}\int_0^t \exp(-ks)\left(X_0+\sigma_0 \int_{0}^{s}\exp (k\tau) dW^\prime_\tau\right)dW^\prime_s\right. \nonumber\\
& \left.+\frac {(\sigma_0^2-4b)}{2\sigma_0}\int_0^t \frac{\exp(ks)}{(X_0+\sigma_0 \int_{0}^{s}\exp k\tau dW^\prime_\tau)}dW^\prime_s \right )\times \nonumber\\
	&\times \exp \left (-\frac{1}{2}\int_0^t\left[-\frac{a+2k}{2\sigma_0} \exp(-ks)\left(X_0+\sigma_0 \int_{0}^{s}\exp k\tau dW^\prime_\tau \right)\right.\right.\nonumber\\
&	\left.\left.+\frac {(\sigma_0^2-4b)}{2\sigma_0} \frac{\exp(ks)}{(X_0+\sigma_0 \int_{0}^{s}\exp(k\tau) dW^\prime_\tau)}\right]^2ds \right),
\end{align}
where $W^\prime_t$ is a $\mathbb{Q}-$ Brownian motion. We finally obtain

\begin{equation}
\mathbb{E}_{\mathbb{P}}[g(X_t)]=\mathbb{E}_{\mathbb{Q}}\left[g\left(\frac{\exp(-2kt)}{4}\left(X_0+\sigma_0 \int_{0}^{t}\exp (ks) dW^{\prime}_s\right)^2 \ \right) \frac{d\mathbb{P}}{d\mathbb{Q}}\Bigg|_{\mathcal{F}_{t}}\right]. 	
\end{equation}

\subsection{Ornstein-Uhlenbeck model}

Let us consider the OU model
\[
\begin{pmatrix}
dX_t \\
dZ_t
\end{pmatrix}
=
\begin{pmatrix}
aX_t+b \\
1
\end{pmatrix}
dt+
\begin{pmatrix}
1 \\
0
\end{pmatrix}
dW_t,
\]
where $W_t$ is a one dimensional Brownian motion and  $a$ and $b$ are constants.
If we look for a two-dimensional algebra of infinitesimal Doob symmetries for this model, we find
\[
V_1=\Biggr(
\begin{pmatrix}
\frac 12 e^{-az} \\
0
\end{pmatrix}
,0,0,ae^{-az}\Biggl)
\]

\[
V_2=\Biggr(
\begin{pmatrix}
0\\
1
\end{pmatrix}
,0,0,0\Biggl).
\]

Since $[V_1, V_2]=aV_1$,  $V_1$ and $V_2$ form a solvable Lie algebra of  Doob symmetries for the SDE and we can apply Theorem \ref{theoRaddrQD} in order to find the quasi Doob stochastic transformation $T=(\Phi, B, \eta, h)$  transforming the OU model in a new SDE in triangular form.

Following the same line of the previous examples, we find $B=1$, $\eta=1$, $h=-2ax+c$, with $c$ an arbitrary constant, and
\[
\Phi(x,z)=
\begin{pmatrix}
2x e^{az} \\
z-1
\end{pmatrix}
\]

If we chose $c=0$ we can compute
the transformed SDE $(\mu',\sigma'):=E_T(\mu,\sigma)$ as
\[
\mu'=\Biggr(\frac{1}{\eta}[L(\Phi)+\nabla(\Phi)\cdot\sigma\cdot h]\Biggl)\circ\Phi^{-1}=
\begin{pmatrix}
2be^{a(Z'_t+1)} \\
1
\end{pmatrix}
\]
\[
\sigma'=\Biggr(\frac{1}{\sqrt{\eta}}[\nabla(\Phi)\cdot\sigma\cdot B^{-1}]\Biggl)\circ\Phi^{-1}=
\begin{pmatrix}
2e^{a(Z'_t+1)} \\
0
\end{pmatrix}
\]
which can be easily integrated.
Using \eqref{RN_derivative} with the current expression for $h$ we have

\begin{equation}
\frac{d\mathbb{P}}{d\mathbb{Q}}\Bigg|_{\mathcal{F}_{t}} =  \exp \left( \int_0^t 2aX_sdW^\prime_s-{2}a^2\int_0^t (X_s)^2ds \right).
\end{equation}
Since
\[
\Phi^{-1}(x^\prime,z^\prime)=
\begin{pmatrix}
\frac{x^\prime}{2} e^{-a(z^\prime + 1)} \\
z^\prime+1
\end{pmatrix}
\]
by expressing the integrands in terms of $X^\prime$  we get
\[
\frac{d\mathbb{P}}{d\mathbb{Q}}\Bigg|_{\mathcal{F}_\mathcal{T}} =  \exp \left( a\int_0^t X^\prime_s\exp(-as)dW^\prime_s-\frac{a^2}{2}\int_0^t (X^\prime_s)^2\exp(-2as)ds \right).
\]
and finally
\begin{align}
\frac{d\mathbb{P}}{d\mathbb{Q}}\Bigg|_{\mathcal{F}_{t}} =  \exp \left( a\int_0^t \left[X_0+\frac{2b}{a}(\exp(as)-1)+2\int_{0}^{s}\exp(a\tau)dW^\prime_\tau\right]\exp(-as)dW^\prime_s \right) \times\nonumber  \\
\times \exp \left(-\frac{a^2}{2}\int_0^t \left(\left[X_0+\frac{2b}{a}(\exp(as)-1)+2\int_{0}^{s}\exp(a\tau)dW^\prime_\tau  \right]^2\exp(-2as)\right)ds \right).
\end{align}

On the other hand, if we chose $c=-b$, the reduced system becomes much more simple

\[
\begin{pmatrix}
dX^\prime_t \\
dZ^\prime_t
\end{pmatrix}
=
\begin{pmatrix}
0\\
1
\end{pmatrix}
dt+
\begin{pmatrix}
2\exp(a(Z_t^\prime+1)) \\
0
\end{pmatrix}
dW^\prime_t,
\]
and we obtain
\begin{equation}
\mathbb{E}_{\mathbb{P}}[g(X_t)]=\mathbb{E}_{\mathbb{Q}}\left[g(\Phi^{-1}(X^\prime_t)) \frac{d\mathbb{P}}{d\mathbb{Q}}\Bigg|_{\mathcal{F}_{t}}\right]
\end{equation}
where, using that $X^\prime_t=X_0+2\int_{0}^{t}\exp(as)dW^\prime_s$,
\begin{align}
\frac{d\mathbb{P}}{d\mathbb{Q}}\Bigg|_{\mathcal{F}_{t}}=&\exp \left(a\int_{0}^{t}\exp(-as)\left(X_0+2\int_{0}^{t}\exp(as)dW^\prime_s\right)-\frac{a^2}{2}\int_{0}^{t}\exp(-2as)(X^\prime_s)^2ds\right) \times\nonumber \\
&\times \exp\left(bW^\prime_t-\frac{b^2}{2}t\right)\left(
\exp(-ab)\int_{0}^{t}\exp(-as)X^\prime_sds\right)
\end{align}
and
\begin{equation}
\Phi^{-1}(X^\prime_t)=\frac{\exp(-at)}{2}\left(X_0+2\int_{0}^{t}\exp(as)dW^\prime_s\right).
\end{equation}

\subsection{A two dimensional example}\label{section_2d}
Let us consider the SDE
\begin{equation}\label{equation_singular}
\left(\begin{array}{c}
dX_t\\
dY_t\\
dZ_t
\end{array}\right)=\left(
\begin{array}{c}
\frac{\alpha X_t}{X_t^2+Y_t^2}\\
\frac{-\alpha Y_t}{X_t^2+Y_t^2}\\
1
\end{array} \right)dt +
\left(\begin{array}{cc}
\frac{X_t^2-Y_t^2}{\sqrt{X_t^2+Y_t^2}} & 0\\
0 & \frac{X_t^2-Y_t^2}{\sqrt{X_t^2+Y_t^2}}\\
0 & 0
\end{array}\right) \cdot \left(\begin{array}{c}
dW^1_t\\
dW^2_t
\end{array}\right),
\end{equation}
where $\alpha \in \mathbb{R}$, that has been discussed in \cite{DMU2} (see also \cite{Craddock2009}, where a similar equation has been studied using Lie's point symmetries of the related Kolmogorov equation).  Solving the determining equations for the quasi Doob symmetries of this SDE, we find
$$
V_1=\left(\left(\begin{array}{c}
\frac{y}{x^2+y^2}\\
\frac{x}{x^2+y^2}\\
0
\end{array} \right), \left(\begin{array}{cc}
0 & \frac{x^2-y^2}{(x^2+y^2)^2}\\
-\frac{x^2-y^2}{(x^2+y^2)^2} & 0
\end{array}\right),0, \left(\begin{array}{c}
0\\
0
\end{array} \right)\right)
$$

$$
V_2=\left(\left(\begin{array}{c}
x\\
y\\
0
\end{array} \right), \left(\begin{array}{cc}
0 & 0\\
0 & 0
\end{array}\right),0, \left(\begin{array}{c}
\frac {-2\alpha x}{(x^2-y^2)\sqrt{x^2+y^2}}\\
\frac {2\alpha y}{(x^2-y^2)\sqrt{x^2+y^2}}
\end{array} \right)\right)
$$

$$
V_3=\left(\left(\begin{array}{c}
0\\
0\\
1
\end{array} \right), \left(\begin{array}{cc}
0 & 0\\
0 & 0
\end{array}\right),0, \left(\begin{array}{c}
0\\
0
\end{array} \right)\right).
$$
Since  $[Y_1,Y_2]=2Y_1$, $[Y_1, Y_3]=0$ and $[Y_2, Y_3]=0$,  these symmetries form a solvable Lie algebra.
Therefore, as in the previous examples, we can compute a finite stochastic transformation $T=(\Phi, B, \eta, h)$ such that the transformed SDE $(\mu', \sigma')= E_T(\mu, \sigma)$ is in triangular form.
In particular for $\eta=1$,
\[
\Phi(x,y,z)=
\begin{pmatrix}
xy \\
\frac 12 \log (|x^2-y^2|)\\
z
\end{pmatrix},
\]

\[
B=\left(\begin{array}{cc}
\frac{y}{\sqrt{x^2+y^2}} & \frac{x}{\sqrt{x^2+y^2}}\\
\frac{-x}{\sqrt{x^2+y^2}} & \frac{y}{\sqrt{x^2+y^2}}\end{array}\right),
\]
 and
 \[
h=
\begin{pmatrix}
\frac {y -\frac {\alpha x}{x^2-y^2}}{\sqrt{x^2+y^2}} \\
\frac {y+\frac {\alpha x}{x^2-y^2}}{\sqrt{x^2+y^2}}
\end{pmatrix}
\]
 we can explicit compute the transformed SDE and we get
\[
\mu'=
\begin{pmatrix}
e^{2Y'_t} \\
-1\\
1
\end{pmatrix}
\]
 and
 \[
\sigma'=\left(\begin{array}{cc}
e^{2Y'_t} & 0\\
0& -1\\
0 & 0\end{array}\right).
\]
Therefore, the reduced SDE has the following triangular form
\begin{align}
dX^\prime_t&=\exp(2Y^\prime_t)dt+\exp(2Y^\prime_t)dW^\prime_{{1},{t}} \notag \\
dY^\prime_t&=-dt-dW^\prime_{{2},{t}} , \notag \\
dZ^\prime_t&=dt, \notag
\end{align}
and can be easily integrated.  Indeed
\begin{align}
Y^\prime_t&=Y^\prime_{{0}}-t-W^\prime_{{2},{t}} , \notag \\
X^\prime_t&=X^\prime_0+\int_{0}^{t}\exp(Y^\prime_0-s-W^\prime_{{2},{s}})ds+\int_{0}^{t}\exp(Y^\prime_0-s-W^\prime_{{2},{s}})dW^\prime_{{1},{s}} \notag \\
Z^\prime_t&=t, \notag
\end{align}
that is $(X^\prime_t,Y^\prime_t)$ is integrable according with Definition \ref{standard_reconstruction} with $O=X^\prime$ and $O^\prime=Y^\prime$.

By the reconstruction theorem (Theorem \ref{reconstruction}) and since there is no time change (i.e. $\eta=1$ ) we obtain
that, $\forall t \in [0,\mathcal{T}]$,
\begin{equation}
\mathbb{E}_{\mathbb{P}}[g(X_t,Y_t)]=\mathbb{E}_{\mathbb{P}}[g(\Phi^{-1}(X^\prime_t,Y^\prime_t))]=
\mathbb{E}_{\mathbb{Q}}\left[g(\Phi^{-1}(X^\prime_t,Y^\prime_t)) \frac{d\mathbb{P}}{d\mathbb{Q}}\Bigg|_{\mathcal{F}_{t}}	\right]
\end{equation}	
where
\[
\frac{d\mathbb{P}}{d\mathbb{Q}}\Bigg|_{\mathcal{F}_{\alpha_t}} =  \exp \left( -\int_0^{t} h^TB^{-1}(\Phi^{-1}(X^\prime_s,Y^\prime_s))d{W}^\prime_s\right)\times
\]
\[
\times \exp \left(+\frac{1}{2}\int_0^{t} h^Th(\Phi^{-1}(X^\prime_s,Y^\prime_s))ds \right).
\]
We calculate $\Phi^{-1}$ obtaining
\[
\Phi^{-1}(x^\prime,y^\prime,z^\prime)=
\begin{pmatrix}
\pm{\sqrt{\frac{\sqrt{\exp(4y^\prime)+4(x^\prime)^2}-\exp(2y^\prime)}{2}}} \\
\frac{x^\prime}{\pm\sqrt{\frac{\sqrt{\exp(4y^\prime)+4(x^\prime)^2}-\exp(2y^\prime)}{2}}} \\
z^\prime
\end{pmatrix},
\]
where we used that
\[
x^2+y^2=\sqrt{\exp(4y^\prime)+4(x^\prime)^2}.
\]
The Radon-Nikodym derivative up to time $t$ becomes
\[
\frac{d\mathbb{P}}{d\mathbb{Q}}\Bigg|_{\mathcal{F}_{\alpha_t}} =  \exp \left( -\int_0^{t}[ D_{1,s}d{W}^\prime_{1,s}+D_{2,s}d{W}^\prime_{2,s}]
-\frac{1}{2}\int_0^{t} 2D_{3,s} ds \right),
\]
with
\begin{align*}
D_{1,s}=&\frac{1}{\sqrt{\exp(4Y^\prime_s)+4(X^\prime_s)^2}}\left[\frac{\sqrt{\exp(4Y^\prime_s)
		+4(X^\prime_s)^2}}{2}+X^\prime_s-\frac{\exp(2Y^\prime_s)}{2}\right]\\
&+\frac{\alpha}{\exp(4Y^\prime_s)+4(X^\prime_s)^2}\left[\frac{\sqrt{\exp(4Y^\prime_s)
		+4(X^\prime_s)^2}}{2}-X^\prime_s+\frac{\exp(2Y^\prime_s)}{2}\right]
\end{align*}
\begin{align*}
D_{2,s}=&\frac{1}{\sqrt{\exp(4Y^\prime_s)+4(X^\prime_s)^2}}\left[\frac{\sqrt{\exp(4Y^\prime_s)
		+4(X^\prime_s)^2}}{2}-X^\prime_s-\frac{\exp(2Y^\prime_s)}{2}\right]\\
&+\frac{\alpha}{\exp(4Y^\prime_s)+4(X^\prime_s)^2}\left[\frac{\sqrt{\exp(4Y^\prime_s)
		+4(X^\prime_s)^2}}{2}+X^\prime_s+\frac{\exp(2Y^\prime_s)}{2}\right]
\end{align*}
\begin{align*}
D_{3,s}=&\frac{1}{\sqrt{\exp(4Y^\prime_s)+4(X^\prime_s)^2}}\left[\left(\frac{\sqrt{\exp(4Y^\prime_s)
		+4(X^\prime_s)^2}}{2}-\frac{\exp(2Y^\prime_s)}{2}\right)\right.\\
&\left.+\frac{\alpha^2}{\exp(4Y^\prime_s)+4(X^\prime_s)^2}\left(\frac{\sqrt{\exp(4Y^\prime_s)
		+4(X^\prime_s)^2}}{2}+\frac{\exp(2Y^\prime_s)}{2}\right)\right].
\end{align*}
Finally we get
\begin{multline*}
\mathbb{E}_{\mathbb{P}}[g(X_t,Y_t)]=\\
=\mathbb{E}_{\mathbb{Q}}\left[g\left(\pm{\sqrt{\frac{\sqrt{\exp(4Y^\prime_t)+4(X^\prime_t)^2}+\exp(2Y^\prime_t)}{2}}},{\pm\sqrt{\frac{\sqrt{\exp(4Y^\prime_t)+4(X^\prime_t)^2}-\exp(2Y^\prime_t)}{2}}}\right) \frac{d\mathbb{P}}{d\mathbb{Q}}\Bigg|_{\mathcal{F}_{t}}\right].
\end{multline*}

\section*{Conclusions and future developments}

In this paper we extend an application of (standard) Lie symmetry analysis of differential equations to the case of general symmetries of stochastic differential equations introduced in \cite{DMU3} and we prove some properties of these new general infinitesimal symmetries (namely that they form a Lie algebra). The main results of the paper are  reduction and reconstruction procedures for symmetric SDE in this general framework. Moreover, we introduce the notion of quasi Doob transformations and symmetries, useful for the mentioned reconstruction procedure. Future developments of this research will be certainly the applications of the previous theory to numerical integration of SDEs (as done in \cite{DeUg2017} for strong symmetries, see also \cite{AlDeMoUg2019}). In this direction the notion of quasi Doob symmetries seems to be very useful, since the reconstruction formula obtained above contains only Riemann integration in the exponential change of measure. From a theoretical point of view, another interesting  application of the general setting developed in this paper is the problem of finding  some new explicit formulas for generic symmetric processes similar to the ones discussed in Malliavin calculus for Brownian motion.

\section*{Acknowledgments}

We thank Elena Cattarin for the discussions on the topic and the help in a part of the present work. The first and third author are funded by the DFG under Germany’s Excellence Strategy - GZ 2047/1, project-id 390685813.

\bibliographystyle{plain}
\bibliography{doob_rec_def}

\end{document}